\documentclass[11pt,a4paper,reqno]{article}
\usepackage{amsmath,amsthm,amssymb}
\usepackage[margin=1in]{geometry}
\usepackage{authblk}
\usepackage{mathtools}
\usepackage{esint}

   	 % spatial domain full
 % full space-time domain
 % full space-time boundary
  % extende space-time domain

\newcommand{\ats}[3][2]{\alpha_{#3}^{#2}}
\newcommand{\up}[3][2]{u_{#3}^{#2\,+}}
\newcommand{\wats}[3][2]{\widehat{\alpha}_{#3}^{#2}}
\newcommand{\um}[3][2]{u_{#3}^{#2\,-}}
\newcommand{\vm}[3][2]{v_{#3}^{#2\,-}}
\newcommand{\vp}[3][2]{v_{#3}^{#2\,+}}

\newcommand{\halfarrow}{\rightharpoonup}

\usepackage{multirow}
\usepackage[table,xcdraw]{xcolor}

\usepackage{mathrsfs}
\numberwithin{equation}{section}
\usepackage[labelformat=simple]{subcaption}

\usepackage{enumitem}
\usepackage{mathtools}  
%\usepackage{mathbbol}
%\mathtoolsset{showonlyrefs} 

\theoremstyle{plain}
\newtheorem{theorem}{Theorem}[section]

\newtheorem{proposition}[theorem]{Proposition}
\newtheorem{definition}[theorem]{Definition}
\newtheorem{example}[theorem]{Example}
\newtheorem{lemma}[theorem]{Lemma}
\newtheorem{remark}[theorem]{Remark}

\newtheorem{corollary}{Corollary}[section]

\newcolumntype{H}{>{\setbox0=\hbox\bgroup}c<{\egroup}@{}}

\definecolor{refkey}{rgb}{0,0,1}
\definecolor{labelkey}{rgb}{0,0,1}

\usepackage[normalem]{ulem}
\normalem

\allowdisplaybreaks

\begin{document}
	\title{Strong bounded variation estimates for the multi--dimensional finite volume approximation of scalar conservation laws and  application to a tumour growth model}
	\author[1]{Gopikrishnan Chirappurathu Remesan\thanks{mail: gopikrishnan.chirappurathuremesan@monash.edu}}
	\date{\today}
	\affil[1]{\small{IITB -- Monash Research Academy, Indian Institute of Technology Bombay, Mumbai, Maharashtra 400076, India}}
	\maketitle
	
	\begin{abstract}
		A uniform bounded variation estimate for finite volume approximations of the nonlinear scalar conservation law $\partial_t \alpha + \mathrm{div}(\boldsymbol{u}f(\alpha)) = 0$ in two and three spatial dimensions with an initial data of bounded variation is established.  We assume that the divergence of the velocity $\mathrm{div}(\boldsymbol{u})$ is of bounded variation instead of the classical assumption that $\mathrm{div}(\boldsymbol{u})$ is zero. The finite volume schemes analysed in this article are set on nonuniform Cartesian grids. A uniform bounded variation estimate for finite volume solutions of the conservation law $\partial_t \alpha + \mathrm{div}(\boldsymbol{F}(t,\boldsymbol{x},\alpha)) = 0$, where $\mathrm{div}_{\boldsymbol{x}}\boldsymbol{F} \not=0$ on nonuniform Cartesian grids is also proved. Such an estimate provides compactness for finite volume approximations in $L^p$ spaces, which is essential to prove the existence of a solution for a partial differential equation with nonlinear terms in $\alpha$, when the uniqueness of the solution is not available. This application is demonstrated by establishing the existence of a weak solution for a model that describes the evolution of initial stages of breast cancer proposed by S. J. Franks et al.~\cite{Franks2003424}. The model consists of four coupled variables: tumour cell concentration, tumour cell velocity--pressure, and nutrient concentration, which are governed by a hyperbolic conservation law, viscous Stokes system, and Poisson equation, respectively. 
		  Results from numerical tests are provided and they complement theoretical findings.  \vspace{0.5cm} \\
		\normalsize{	\textbf{Mathematics Subject Classification. } 65M08, 65M12, 35L65 }
		\vspace{0.5cm} \\
		\normalsize{	\textbf{Keywords. } Scalar conservation laws; Nonlinear flux; Finite volume schemes; Bounded variation; Cartesian grids; Convergence analysis; Breast cancer model}
	\end{abstract}

	\section{Introduction}
	\label{sec:intro}
	\noindent Consider the following scalar hyperbolic conservation law in $\mathbb{R}^2$ with a homogeneous source term and an initial data of bounded variation $(BV)$: 
	\begin{align}
	\label{eqn:cons_law}\left.
	\begin{array}{r l}
	\partial_t \alpha + \mathrm{div}(\boldsymbol{u}f(\alpha)) &={} 0\;\;\mathrm{ in }\; \Omega_T\;\;\textmd{and} \\
	\alpha(0,\cdot) &= \alpha_0\;\;\mathrm{ in }\; \Omega,
	\end{array}
	\right\}
	\end{align}
	where $\alpha$ is the unknown, $\alpha_0 : \Omega \rightarrow \mathbb{R}$ is  known \emph{a priori} function of $BV$, $\boldsymbol{u} = (u,v)$ is the advecting velocity, $\Omega_T := (0,T) \times \Omega$, $\Omega := \mathrm{I} \times \mathrm{J}$, $\mathrm{I} := (a,b) \subset \mathbb{R}$ and $\mathrm{J} := (c,d) \subset \mathbb{R}$ are intervals. For technical simplicity assume that $\boldsymbol{u} = \boldsymbol{0}$ on $\partial \Omega$. The function $f$ quantifies the amount of material advected with the velocity  $\boldsymbol{u}$ and is called the \emph{flux function}. We assume that $f$ is Lipschitz continuous with Lipschitz constant,  $\mathrm{Lip}(f)$, which is a classical assumption in literature~\cite{eymard}. Finite volume methods are extensively used to discretise and compute numerical solutions to~\eqref{eqn:cons_law} since such schemes respect the conservation of mass property associated with the underlying partial differential equation (p.d.e.). 
	
	\medskip
	\noindent \textbf{Motivation}\medskip\\
	Conservation laws of the form~\eqref{eqn:cons_law} are crucial in practical applications. Usually they model density or concentration of a conserved quantity in a coupled system, where the conservation law is strongly entangled with the equation that governs the advecting velocity, and with other governing equations, if present.
	
	A wide class tumour growth models based on multiphase mixture theory~\cite{multi-helen} contain a coupled system of a conserved variable and corresponding advecting velocity.  For instance consider a model developed by S. J. Franks et al.~\cite{Franks2003424} that depicts \emph{ductal carcinoma in situ} -- the initial stage of breast cancer. In two spatial dimensions, the model  describes the evolution of an advancing tissue in a cylindrical domain with rigid walls, see Figure~\ref{fig:domain_adv}.  
	\begin{figure}[htp]
		\centering
		\includegraphics[scale=1]{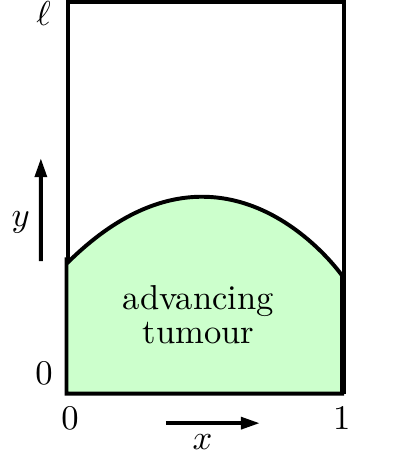}
		\caption{Advancing tumour in the duct $(0,1) \times (0,\ell)$}
		\label{fig:domain_adv}	
	\end{figure}
	
	To keep the discussion simple, we consider the model with simplified kinetics, wherein the viscosity, denoted by $\mu$, inside and outside the tumour is assumed to be uniform and  divergence of the velocity field is assumed to depend only on nutrient concentration.  The domain of tumour growth is denoted by $\Omega = \{ \boldsymbol{x} := (x,y)\,:\, 0 \le x \le 1,\, 0 \le y \le \ell \}$. Here, $x$ is radial distance, $y$ is the axial distance, and $\ell > 2$ is the duct length. For $T < \infty$, time--space domain is denoted by $\Omega_T = (0,T) \times \Omega$ and $t \in (0,T)$ is the time variable. The model variables are concentration of the tumour cells $\alpha(t,\boldsymbol{x})$, velocity of the tumour cells $\boldsymbol{u}(t,\boldsymbol{x}) := (u(t,\boldsymbol{x}),v(t,\boldsymbol{x}))$, pressure inside the tumour $p(t,\boldsymbol{x})$, and nutrient concentration  $c(t,\boldsymbol{x})$. The model seeks a four tuple $(\alpha,p,\boldsymbol{u},c)$ such that, in $\Omega_{T}$ it holds
	
	\begin{subequations}
		\begin{align}
		\mathrm{tumour\;cell\;concetration} &\left\{ \begin{array}{c}
		\dfrac{\partial \alpha}{\partial t} + \mathrm{div}(\boldsymbol{u}\alpha) ={} \gamma \alpha (1 - c),
		\end{array} \label{eqn:cell_vf} \right.\\
		\mathrm{velocity-pressure\;system} &\left\{
		\begin{array}{r l}
		-\mu \left( \Delta \boldsymbol{u} + \dfrac{1}{3} \nabla (\mathrm{div}(\boldsymbol{u})) \right) + \nabla p&={} \boldsymbol{0}, \\
		\mathrm{div}(\boldsymbol{u})&={} \gamma (1 - c),  \text{ and} 
		\end{array} \label{eqn:pv_sys} \right. \\
		\mathrm{nutrient\;concentration} &\left\{
		\begin{array}{c}
		-\Delta c = Q\alpha, 
		\end{array} \label{eqn:nutr} \right.
		\end{align}
		\label{eqn:ductal_tum}
			\end{subequations}
		with appropriate boundary conditions. In~\eqref{eqn:cell_vf}, $\gamma$ is a positive constant that controls the rate of cell division and in~\eqref{eqn:nutr}, $Q$ is a positive constant that controls the nutrient intake by the cells.

	 Another example is the two--phase tumour spheroid growth problem~\cite{droniou2019convergence}, where velocity of the tumour cells $\boldsymbol{u}$ is governed by 
	\begin{align}
	\left.
	\begin{array}{r l}
	-\text{div} \left( \mu \alpha (\nabla {\boldsymbol u} + (\nabla {\boldsymbol u})^T)+ \lambda \alpha \text{div}({\boldsymbol u})\mathbb{I}_{2}\right) + \nabla p &= -\nabla\left( \dfrac{(\alpha - \alpha^{\ast})^{+}}{(1 - \alpha)^2} \right) \textrm{ and }\\
	- \text{div} \left(\dfrac{1-\alpha}{k\alpha} \nabla p \right) + \text{div} (\boldsymbol{u}) &= 0,
	\end{array} \right\}
	\label{eqn:cellphase}
	\end{align}
	where $\mu$ and $\lambda$ are the viscosity coefficients, $k$ is the traction coefficient, $\alpha^{\ast}$ is a positive parameter that controls intra--cellular attraction, $p$ is the pressure, $\mathbb{I}_2$ is the $2 \times 2$ identity tensor, and $\alpha$ is evolves with respect to~\eqref{eqn:cons_law} with a nonlinear source function in $\alpha$.
	
	To show that a possible limit of discrete solutions obtained from a finite volume scheme applied to~\eqref{eqn:cons_law} or~\eqref{eqn:cell_vf} satisfies~\eqref{eqn:ductal_tum} or~\eqref{eqn:cellphase}, respectively and hence to prove the existence of a solution, we need to establish that the discrete solutions converge to the limit in strong $L^p$--norm, where $p \ge 1$. Otherwise, it becomes challenging to apply pass to the limit arguments on functions of $\alpha$ appearing in~\eqref{eqn:ductal_tum} and~\eqref{eqn:cellphase}. A feasible way to obtain strong $L^p$--norm convergence is to show that the discrete solutions have uniform $BV$ and invoke Helly's selection theorem (Theorem~\ref{appen_id.c}) to extract a strongly converging subsequence. %However to the best of our knowledge, a uniform $BV$ estimate on finite volume solutions obtained by using unstructured grids is not available in literature.
	 Moreover, the velocity vector field $\boldsymbol{u}$ is not necessarily divergence--free of which~\eqref{eqn:pv_sys} is a direct example. The divergence of the velocity field manifests as a source term in~\eqref{eqn:cons_law}. Hence, while attempting to obtain a uniform $BV$ estimate on discrete solutions of~\eqref{eqn:cons_law}, we need to account for divergent velocity vector fields also.
	
	\medskip
	\noindent \textbf{Literature}\medskip\\
	Total variation properties of weak and entropy solutions of~\eqref{eqn:cons_law} are rather classical results. E. Conway and J. Smoller~\cite{Conway196695} studied conservation laws of the form 
	\begin{equation}
	\partial_t \alpha + \sum_{j=1}^d \partial_{ x_j}f_j(\alpha) = 0,
	\label{eqn:bv-class}
	\end{equation}
	where $BV$ initial data and $(f_j)_{j=1,\ldots,d}$ are assumed to be in $\mathscr{C}^1(\mathbb{R};\mathbb{R})$. They studied
	a finite difference scheme on a uniform Cartesian grid (see Definition~\ref{defn:admis_grid}) and showed that discrete solutions have uniform $BV$. The limit solution obtained from a strongly convergent subsequence is then showed to be a weak solution and is a function with $BV$. N. Kuznetsov~\cite{Kuznetsov1976105} provided early results on $BV$ properties of entropy solutions of~\eqref{eqn:bv-class}. This article~\cite{Kuznetsov1976105} establishes that the $BV$ seminorm of the entropy solution to~\eqref{eqn:bv-class} at any time is bounded by the $BV$ seminorm of the initial data. M. G. Crandall and A. Majda~\cite{crandall} considered monotone finite difference approximations of~\eqref{eqn:bv-class} with $BV$ initial data on uniform Cartesian meshes and established uniform $BV$ estimate for discrete solutions. This estimate is used to prove the convergence of the discrete solutions to the unique entropy solution in strong $L^1$--norm and to prove that the entropy solution also inherits the $BV$ property of the discrete solutions. Later, this work was extended to nonuniform Cartesian meshes by R. Sanders~\cite{Sanders198391}. B. Merlet and J. Vovelle~\cite{Merlet2,Merlet1} considered linear advection equations of the form~\eqref{eqn:cons_law} with $f(\alpha) = \alpha$, $\boldsymbol{u} \in W^{1,\infty}(\mathbb{R}^{+}\times\mathbb{R}^d;\mathbb{R}^d)$, and $\mathrm{div}(\boldsymbol{u}(t,\cdot)) = 0$. The $BV$ seminorm of the unique weak solution of this problem, constructed using the characteristic method, is bounded and the bound depends on the $BV$ seminorm of the initial data. However, discrete solutions corresponding to this problem obtained by using finite volume schemes on general polygonal meshes are not proved to satisfy a uniform $BV$ estimate (see the Remark 1.5 in~\cite[p.~7]{Merlet1}). In fact, to show that the finite volume solutions converge to the entropy solution, whose existence is known \emph{a priori}, it is enough to have a weak $BV$ estimate~\cite[p.~143]{Champier1993139}\cite[p.~161]{eymard} of the following form 
	\begin{align}
	\sum_{n = 0}^{N} \delta \sum_{\mathrm{e}} \vert f(\alpha_{\mathrm{e}}^{p}) - f(\alpha_{\mathrm{e}}^{n}) \vert \left \vert \int_{\mathrm{e}}\boldsymbol{u}(t_{n},\cdot)\cdot\boldsymbol{n}_{\mathrm{e}}\,\mathrm{d}s \right\vert \le \mathrm{C}h^{-1/2},
	\label{eqn:weak-bvrate}
	\end{align}
	where $\delta$ is the temporal discretisation factor, $h$ is the spatial discretisation factor, $\mathrm{e}$ is an edge of a polygon $K$ in the mesh, $\boldsymbol{n}_{\mathrm{e}}$ is the outward unit normal to $\mathrm{e}$ with respect to $K$, $\alpha_{\mathrm{e}}^{(p/n)}$ are the values of a discrete solution on the neighbouring polygons of $\mathrm{e}$. The weak $BV$ estimate ensures convergence in nonlinear weak--$\ast$ sense (see Definition 6.3 in~\cite[p.~100]{eymard}) to a Young measure, called a \emph{process solution}. It can be established that the process solution is indeed a function by proving the uniqueness of the process solution. In this scenario, the nonlinear weak--$\ast$ convergence actually becomes strong $L^p$ convergence (see Theorem 6.4 and 6.5 in~\cite[p.~187-188]{eymard}). Uniqueness of the process solution is crucial in this technique and hence, it is difficult to use it in the case of coupled systems like~\eqref{eqn:ductal_tum} and~\eqref{eqn:cellphase}. The relationship between process solution and function solution is not very clear in this case and an \emph{a priori} compactness result like a uniform $BV$ estimate is necessary to obtain strong $L^p$ convergence. 
	
	A recent uniform $BV$ estimate on finite volume solutions of conservation laws of the form~\eqref{eqn:bv-class} on uniform Cartesian grid is obtained by K. H. Karlsen and J. D. Towers~\cite{Karlsen2017515}. They consider~\eqref{eqn:bv-class} with an auxiliary boundary condition $\boldsymbol{f}\cdot\boldsymbol{n}_{\Omega} = 0$, where $\boldsymbol{n}_{\Omega}$ is the outward unit normal to $\partial \Omega$. C. Chainais-Hillairet~\cite{claire_1999} also provides a uniform $BV$ estimate on finite volume solutions of fully nonlinear conservations laws on uniform square Cartesian grids (see subsection~\ref{subsec:claire} for details).  
	
In~\cite[p.~153]{eymard}, it is stated that  weak $BV$ estimates may be extended to the case with $\mathrm{div}(\boldsymbol{u}) \not= 0$. It is also mentioned in~\cite[p.~154]{eymard} that $BV$ estimates in higher dimensional Cartesian grids reduces to a one dimensional discretisation. However, the corresponding proofs are not provided and we address this aspect.\medskip\\
	\noindent \textbf{Contributions}\medskip\\
	In all of the works reviewed above, either the advecting velocity vector is component--wise constant (see~\eqref{eqn:bv-class}) or the advecting velocity is assumed to be divergence--free. However, these may not be  realistic assumptions in applications as evident from~\eqref{eqn:ductal_tum} and~\eqref{eqn:cellphase}. While discretising physical models, it is imperative to refine the regions where discontinuities of the solution are expected and to retain other regions relatively coarse so that the scheme remains economical. A uniform $BV$ estimate is crucial in enabling the nonlinear terms to converge and hence to prove existence of a solution. 
	
	 The main contributions of this article are stated below.
	\begin{enumerate}[label= \Large${\bullet}$,ref=$\mathrm{(C.\arabic*)}$,leftmargin=\widthof{(C.4)}+3\labelsep]
		\item\label{c.it.1} In the conservation law~\eqref{eqn:cons_law}, the assumption that $\mathrm{div}(\boldsymbol{u}) = 0$ is relaxed. 
		\item\label{c.it.2} A finite volume scheme on nonuniform Cartesian grids in two and three spatial dimensions is considered, and the analysis holds in general for the class of monotone numerical fluxes. The nonuniformity of Cartesian grid can be used to refine the mesh adaptively and economically.
		\item\label{c.it.3} Finite volume solutions satisfy a uniform $BV$ estimate in space and time and this result is extended to the case of fully nonlinear conservation laws analysed by C. Chainais-Hillairet~\cite{claire_1999}.
		\item\label{c.it.4} The existence of a weak solution for~\eqref{eqn:ductal_tum} is shown by utilising the $BV$ estimates on Cartesian grids.  Compactness results rendered by uniform $BV$ present a convergent subsequence out of a family of discrete solutions constructed by applying a finite volume scheme to~\eqref{eqn:cell_vf}, whose limit is shown to be a weak solution of~\eqref{eqn:cell_vf}.
	\end{enumerate}
The uniform $BV$ estimate in space and time for linear and nonlinear conservation laws is obtained by computing the variation of the discrete solution along orthogonal Cartesian axes separately. This method has two major difficulties. Firstly, the term $\alpha\,\mathrm{div}(\boldsymbol{u})$ serves as an additional source function since divergence of the velocity field is not zero. The difference of $\alpha\,\mathrm{div}(\boldsymbol{u})$ at time step $t_{n+1}$ across neighbouring control volumes is estimated in terms of the $BV$ seminorm of $\mathrm{div}(\boldsymbol{u})$ and $L^\infty$ bound of $\alpha$ at time step $t_{n}$. Secondly, while estimating the difference of the discrete solution across two control volumes in $x$ direction, we obtain terms that contain differences of numerical fluxes across the other orthogonal direction and vice--versa. This is a potential obstacle to the standard technique of writing the variation of the discrete solution at $t_{n+1}$ across two control volumes as a convex linear combination of variations of the discrete solutions across neighbouring control volumes at $t_{n}$. We introduce the idea of an intermediate nodal (edge) flux in two (three) spatial dimensions, which is the numerical flux across control volumes sharing only a single vertex (edge), to transform the differences along $y$ and $z$ directions into that along $x$ direction and vice--versa. This helps to obtain a relation of the form
	\begin{align}
	BV(n+1) \le BV(n) + \int_{t_{n}}^{t_{n+1}}A(t)\,\mathrm{d}t,
	\label{eqn:int_rel}
	\end{align} 
	where $BV(n)$ is the $BV$ seminorm of the discrete solution at $t_n$, and $A(t)$ depends on $BV$ seminorm of $\mathrm{div}(\boldsymbol{u})$ and $\|\nabla \boldsymbol{u}(t,\cdot)\|_{L^\infty(\Omega)}$. Finally, an application of induction on~\eqref{eqn:int_rel} yields the $BV$ estimate on the discrete solution. 
	
	\medskip
\noindent	\textbf{Organisation}
	\medskip
	
	This article is organised in the following fashion. In Section~\ref{sec:ma-th}, we present the necessary notations, assumptions, function spaces, and the finite volume scheme. The main results of this article are also presented in Section~\ref{sec:ma-th}. The uniform $BV$ estimate of finite volume solutions of~\eqref{eqn:cons_law} is presented in Section~\ref{sec:b_var}. In Section~\ref{subsec:claire}, we show the uniform $BV$ estimate for conservation laws with fully nonlinear flux. The numerical results and discussion are presented in Section~\ref{sec:num_tests}. The uniform $BV$ estimate for scalar conservation laws in three spatial dimensions in presented in Section~\ref{sec:3d-bv}. The semi--discrete analysis that proves the existence of a weak solution of~\eqref{eqn:ductal_tum} is conducted in Section~\ref{sec:ductal_tumour}. The conclusions are presented  in Section~\ref{sec:con}.
		
	\section{Main results}
	\label{sec:ma-th}
	Four main results are presented in this article. The first three results establish uniform bounded variation estimates in space and time for 
	\begin{enumerate}[label= \Large${\bullet}$,ref=$\mathrm{(C.\arabic*)}$,leftmargin=\widthof{(C.4)}+3\labelsep]
		\item  conservation laws in two spatial dimensions of the form $\partial_t \alpha + \mathrm{div}(\boldsymbol{u}f(\alpha)) = 0$ in Theorem~\ref{thm:bv-alpha}.
		\item conservation laws in two spatial dimensions with fully nonlinear flux of the form $\partial_t \alpha + \mathrm{div}(\boldsymbol{F}(t,\boldsymbol{x},\alpha)) = 0$ in Theorem~\ref{thm:non-lin:flux}.
		\item  conservation laws in three spatial dimensions of the form $\partial_t \alpha + \mathrm{div}(\boldsymbol{u}f(\alpha)) = 0$ in Theorem~\ref{thm:3d-bv-alpha}.
	\end{enumerate}
	The fourth main result, see Theorem~\ref{thm:convergence}, presented in Section~\ref{sec:ductal_tumour} applies Theorem~\ref{thm:bv-alpha}
		 to establish the existence of a weak solution to the practical problem of interest~\eqref{eqn:ductal_tum}.
	
	\subsection{Preliminiaries}
	\label{sec:prelims}
	\begin{definition}
		A function $\beta \in L^1(\mathcal{A})$, where $\mathcal{A} \subset \mathbb{R}^d$, $d \ge 1$ is of $BV$ if $|\beta|_{BV_{\boldsymbol{x}}(\mathcal{A})} < \infty$, where 
		\begin{align}
		|\beta|_{BV_{\boldsymbol{x}}(\mathcal{A})} := \sup \left\{\int_{\mathcal{A}} \beta\, \mathrm{div}(\boldsymbol{\varphi})\,\mathrm{d}\boldsymbol{x}\,: \boldsymbol{\varphi} \in \mathscr{C}_c^1(\mathcal{A};\mathbb{R}^d),\, |\boldsymbol{\varphi}|_{L^\infty(\mathcal{A})} \le 1 \right\}.
		\end{align}
	\end{definition}
	
	The space $BV_{\boldsymbol{x}}(\mathcal{A})$ is the vector space of functions $\beta \in L^1(\mathcal{A})$ with $BV$. Recall that in this article $\Omega_T$ = $(0,T) \times \Omega$, where $\Omega = (a,b) \times (c,d)$. Then, define the following $BV$ seminorms for a function $\beta : \Omega_T \rightarrow \mathbb{R}$:
	\begin{gather}
	|\beta(t,\cdot)|_{L^1_yBV_x} :={} \int_{\mathrm{J}}|\beta(t,\cdot,y)|_{BV_x(\mathrm{I})}\,\mathrm{d}y,\qquad |\beta(t,\cdot)|_{L^1_xBV_y} :={} \int_{\mathrm{I}}|\beta(t,x,\cdot)|_{BV_y(\mathrm{J})}\,\mathrm{d}x, \\ 
	|\beta(t,\cdot)|_{BV_{x,y}} :={} |\beta(t,\cdot)|_{L^1_yBV_x} + |\beta(t,\cdot)|_{L^1_xBV_y},\\
	|\beta|_{L^1_tBV_{x,y}} :={} \int_{0}^T|\beta(t,\cdot)|_{BV_{x,y}}\,\mathrm{d}t,\qquad |\beta|_{L^1_{x,y}BV_t} := \int_{\Omega} |\beta(\cdot,x,y)|_{BV_t(0,T)}\,\mathrm{d}x\,\mathrm{d}y,\; \mathrm{ and } \\
	\label{eqn:ts-bv-norm}
	|\beta|_{BV_{x,y,t}} :={} |\beta|_{L^1_{x,y}BV_t} + |\beta|_{L^1_tBV_{x,y}}. 
	\end{gather}
	Also, define the following norms for a function $v : X_T \rightarrow \mathbb{R}^d$ ($d \ge 1$), where $X_T := (0,T) \times X$:
	\begin{align*}
	\|v\|_{L^1_tL^\infty(X_T)} := \int_{0}^T \|v(t,\cdot)\|_{L^\infty(X)}\,\mathrm{d}t\;\;\textrm{ and }\;\|v\|_{L^\infty_tL^1(X_T)} := \sup_{0 < t < T}\|v(t,\cdot)\|_{L^1(X)}.
	\end{align*}
	For a function $\beta : (a,b) \rightarrow \mathbb{R}$, define the total variation by $\mathrm{T\cdot V\cdot}(\beta) := \sup_{P} \sum_{i=0}^{I} |\beta(x_{i+1}) - \beta(x_i)|,$
	where $P := \left\{a = x_0,\ldots,x_{I+1} = b\right\}$ is a partition of $(a,b)$. It is a classical result that $|\beta|_{BV_x(a,b)} = \mathrm{T\cdot V\cdot}(\beta)$~\cite[Appendix A]{holden}.  
	\begin{definition}[two dimensional admissible grid]
		\label{defn:admis_grid}
		Let\;$\mathrm{X}_{k} := \left\{x_{-1/2},\ldots,x_{I+1/2}\right\}$ and $\mathrm{Y}_{h} := \left\{y_{-1/2},\ldots,y_{J+1/2}\right\}$, where $x_{-1/2} = a$, $x_{I+1/2} = b$, $y_{-1/2} = c$, $y_{J+1/2} = d$, $k_{i} = x_{i+1/2} - x_{i-1/2}$, $h_{j} = y_{j+1} - y_{j-1/2}$, $h = \max_{i} h_{i}$, and $k = \max_{j}k_{j}$. The Cartesian grid\;\;$\mathrm{X}_{k} \times \mathrm{Y}_{h}$ is said to be a two dimensional admissible grid if for a fixed constant $\widetilde{c} > 0$, it holds that $(\widetilde{c})^{-1} \le \frac{h_j}{k_i} \le \widetilde{c}\;\;\forall i,j$.
		%	\begin{enumerate}[label= $\mathrm{(R)}$,ref=$\mathrm{(R)}$,leftmargin=\widthof{(DS.a)}+3\labelsep]
		%		\item\label{it:regul} $(\widetilde{c})^{-1} \le \displaystyle \dfrac{\max_{j}{h_j}}{\min_i{k_{i}}} \le \widetilde{c}$, where $k_{i} = x_{i+1/2} - x_{i-1/2}$ and $h_{j} = y_{j+1} - y_{j-1/2}$.
		%	\end{enumerate}
		If $k_i = k$ $\forall\,i$ and $h_j = h$ $\forall\,j$, then $\mathrm{X}_{k} \times \mathrm{Y}_{h}$  is called a uniform Cartesian grid and otherwise a nonuniform Cartesian grid, see Figure~\ref{fig:gen_grid}. 
	\end{definition}
Assume that~\ref{as.1}--\ref{as.3} below hold.
\begin{enumerate}[label= $\mathrm{(AS.\arabic*)}$,ref=$\mathrm{(AS.\arabic*)}$,leftmargin=\widthof{(AS.4)}+3\labelsep]
	\item\label{as.1} The flux $f: \mathbb{R} \rightarrow \mathbb{R}$ and the numerical flux $g: \mathbb{R}^2 \rightarrow \mathbb{R}$ are Lipschitz continuous with Lipschitz constants $\mathrm{Lip}(f)$ and $\mathrm{Lip}(g)$, respectively.
	\item \label{as.2} The numerical flux $g$ is monotonically nondecreasing in the first variable and nonincreasing in the second variable, and satisfies $g(a,a) = f(a)\;\forall a \in \mathbb{R}$.
	\item \label{as.3} There exists a constant $\mathscr{C} \ge 0$ such that $$\max\left( ||\boldsymbol{u}||_{L_t^1L^\infty(\Omega_T)}, ||\nabla \boldsymbol{u}||_{L_t^1L^\infty(\Omega_T)}, |\mathrm{div}(\boldsymbol{u})|_{L_t^1BV_{x,y}} \right) \le \mathscr{C}  < \infty.$$ 
\end{enumerate}
	\subsection{Presentation of the numerical scheme}
	\label{sec:presentation}
	Define the spatial discretisation factor $h_{\max}$ by $h_{\max} = \max_{i,j}\left\{k_i,h_j\right\}$, which quantifies the size of the Cartesian grid $\mathrm{X}_{k} \times \mathrm{Y}_{h}$. Let $\mathrm{T}_{\delta}$ defined by $\mathrm{T}_{\delta} := \left\{t_0,\ldots,T_{N} \right\}$ be a discretisation of $(0,T)$, where $t_0 = 0$ and $t_{N} = T$. Define the temporal discretisation factor by $\delta = \max_{n} \delta_n$, where $\delta_n = t_{n+1} - t_{n}$. For technical simplicity a uniform temporal discretisation is taken, wherein $\delta_{n} = \delta\;\;\forall\,n$. However, note that the results in this article hold with a nonuniform temporal discretisation also.
	
	\begin{figure}[htp]
		\centering
		\includegraphics[scale=0.9]{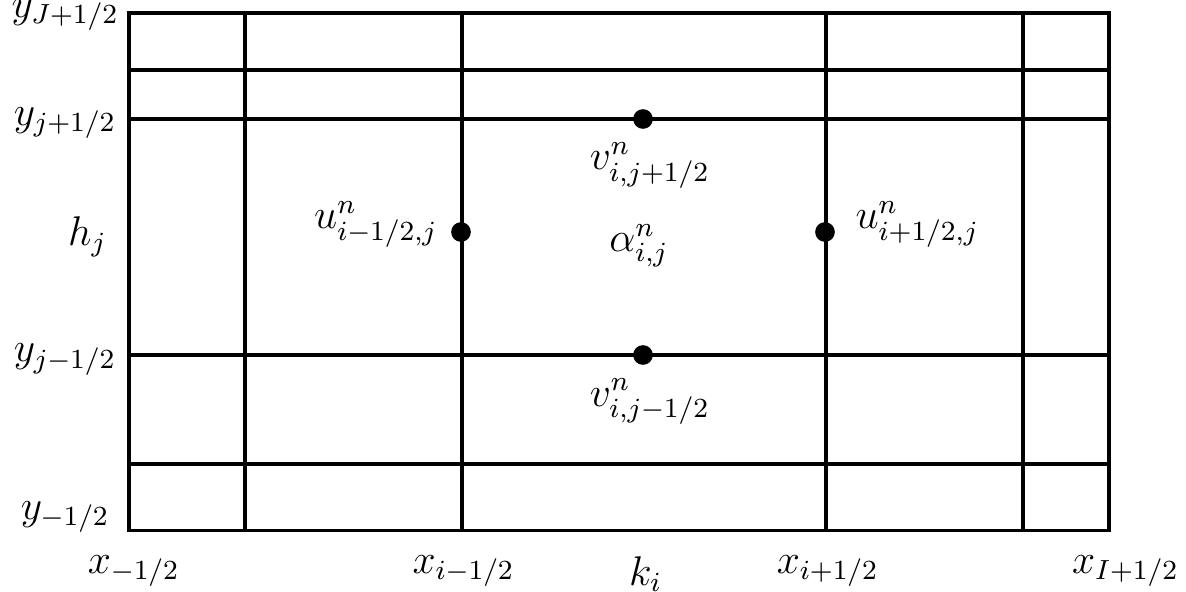}
		\caption{Rectangular grid and locations of the velocities and discrete unknowns $\ats{n}{i,j}$.}
		\label{fig:gen_grid}
	\end{figure}

	Integrate~\eqref{eqn:cons_law} on the time--space control volume $(t_{n+1},t_{n}) \times K_{ij}$, where $K_{ij} = (x_{i-1/2},x_{i+1/2}) \times (y_{j-1/2},y_{j+1/2})$ and apply the divergence theorem to obtain
	\begin{align}
	0 = \int_{t_{n}}^{t_{n+1}} \int_{K_{ij}} \partial_t \alpha \,\mathrm{d}\boldsymbol{x}\mathrm{d}t&+ \int_{t_{n}}^{t_{n+1}} \int_{\partial K_{ij}} f(\alpha) (u,v)\cdot\boldsymbol{n}_{ij} \,\mathrm{d}s\,\mathrm{d}t =: \mathrm{I}_1 + \mathrm{I}_2, 
	\label{eqn:disc_weak}
	\end{align}
	where $\boldsymbol{n}_{ij}$ is the outward unit normal to $\partial K_{ij}$ and $\boldsymbol{u} = (u,v)$. Replace $\mathrm{I}_1$ by the difference formula $k_{i}h_{j}(\ats{n+1}{i,j} - \ats{n}{i,j})$. Term $\mathrm{I}_2$ in~\eqref{eqn:disc_weak} is approximated by the numerical flux $g : \mathbb{R}^2 \rightarrow\mathbb{R}$ as $\delta h_{j}(\mathrm{F}_{i+1/2,j}^n - \mathrm{F}_{i-1/2,j}^n) + \delta k_{i}(\mathrm{G}_{i,j+1/2}^n - \mathrm{G}_{i,j-1/2}^n)$, where
	\begin{align}
	\mathrm{F}_{i-1/2,j}^n :={}& \left(u_{i-1/2,j}^{n\,+}g(\ats{n}{i-1,j},\ats{n}{i,j}) - u_{i-1/2,j}^{n\,-}g(\ats{n}{i,j},\ats{n}{i-1,j}) \right),\\
	\label{eqn:Gij-Jij}
	\mathrm{G}_{i,j-1/2}^n :={}& \left(v_{i,j-1/2}^{n+}g(\ats{n}{i,j-1},\ats{n}{i,j}) - v_{i,j-1/2}^{n-}g(\ats{n}{i,j},\ats{n}{i,j-1}) \right),
	\end{align}
	$a^{+} = \max(a,0)$, and $a^{-} = -\min(a,0)$ for $a \in \mathbb{R}$, 
	\begin{align}
	u_{i-1/2,j}^n = \fint_{t_{n}}^{t_{n+1}}\fint_{y_{j-1/2}}^{y_{j+1/2}} u(t,x_{i-1/2},s)\,\mathrm{d}s\,\mathrm{d}t\;\;\mathrm{ and }\;\;\,v_{i,j-1/2}^n = \fint_{t_{n}}^{t_{n+1}} \fint_{x_{i-1/2}}^{x_{i+1/2}} v(t,s,y_{j-1/2})\,\mathrm{d}s\,\mathrm{d}t.
	\end{align}
	Locations of the discrete unknowns $\ats{n}{i,j}$, velocities $u_{i-1/2,j}$ and $v_{i,j-1/2}$ in a two dimensional admissible grid is shown in Figure~\ref{fig:gen_grid}.  A substitution of approximations of $\mathrm{I}_1$ and $\mathrm{I}_2$ in~\eqref{eqn:disc_weak} leads to
	\begin{subequations}
		\begin{align}
		\ats{n+1}{i,j} = \ats{n}{i,j} - \mu_{i}(\mathrm{F}_{i+1/2,j}^n - \mathrm{F}_{i-1/2,j}^n) - \lambda_{j} (\mathrm{G}_{i,j+1/2}^n - \mathrm{G}_{i,j-1/2}^n),
		\label{eqn:fvm_scheme}
		\end{align}
		where $\mu_i = \delta/k_{i}$ and $\lambda_{j} = \delta/h_{j}$.  We set the discrete initial data as follows
		\begin{align}
		\alpha_{i,j}^0 := \fint_{K_{i,j}} \alpha_0(\boldsymbol{x})\,\mathrm{d}\boldsymbol{x}.
		\label{eqn:alpha_ini}
		\end{align}
	\end{subequations}
	The terms $\mathrm{F}_{i,j}^n$ and $\mathrm{G}_{i,j}^n$ can be expressed as, for $s \in \{-1,1\}$
	\begin{subequations}
		\begin{align}
		\label{eqn:flux_x}
		\mathrm{F}_{i+s/2,j}^n ={}& \mathrm{M}_{i +s/2,j}^{\,x}\left[ \dfrac{(1 - s)}{2}\left(\ats{n}{i-1,j} - \ats{n}{i,j}\right) + \dfrac{(1 + s)}{2}\left(\ats{n}{i,j} - \ats{n}{i+1,j}\right)  \right]  \\
		&\hspace{5cm}+ u_{i+s/2,j}^n f(\alpha_{i,j}^n)\;\;\textrm{and} 
		\\
		\label{eqn:flux_y}
		\mathrm{G}_{i,j+s/2}^n ={}& \mathrm{M}_{i,j+s/2}^{\,y}\left[\dfrac{(1 - s)}{2}\left(\ats{n}{i,j-1} - \ats{n}{i,j}\right) + \dfrac{(1 + s)}{2}\left(\ats{n}{i,j} - \ats{n}{i,j+1}\right) \right] \\
		&\hspace{5cm}+  v_{i,j+s/2}^n f(\alpha_{i,j}^n), 
		\end{align}
		where
		\begin{align}
		\mathrm{M}_{i-1/2,j}^{\,x} :={}& \left[ u_{i-1/2,j}^{n\,+} \,\mathrm{D}_{i,j}^n(\ats{n}{i-1,j},\ats{n}{i,j}) + u_{i-1/2,j}^{n\,-}\, \mathrm{D}_{i,j}^n(\ats{n}{i,j},\ats{n}{i-1,j}) \right], \\
		\mathrm{M}_{i,j-1/2}^{\,y} :={}&  \left[ v_{i,j-1/2}^{n\,+}\, \mathrm{D}_{i,j}^n(\ats{n}{i,j-1},\ats{n}{i,j}) + v_{i,j-1/2}^{n\,-}\, \mathrm{D}_{i,j}^n(\ats{n}{i,j},\ats{n}{i,j-1}) \right],
		\end{align}
		and the difference quotient $\mathrm{D}_{i,j}^n : \mathbb{R}^2 \rightarrow \mathbb{R}$ is defined by 
		\begin{align}
		\mathrm{D}_{i,j}^n(a,b) =  \left\{ \begin{array}{c l}
		\dfrac{g(a,b) - g(\ats{n}{i,j},\ats{n}{i,j})}{a-b} & \text{ if } a\not= b,\;\;\textrm{and}\\
		0 & \text{ if } a =  b.
		\end{array}
		\right.
		\label{eqn:diff_quo}
		\end{align}
	\end{subequations}
	Observe that $\mathrm{D}_{i,j}^n(\ats{n}{i-1,j},\ats{n}{i,j}),\,\mathrm{D}_{i,j}^n(\ats{n}{i,j},\ats{n}{i-1,j}),\,\mathrm{D}_{i,j}^n(\ats{n}{i,j},\ats{n}{i,j-1}),$ and $\mathrm{D}_{i,j}^n(\ats{n}{i,j-1},\ats{n}{i,j})$, (hence, $\mathrm{M}_{i-1/2,j}^{x}$ and $\mathrm{M}_{i,j-1/2}^{y}$) are nonnegative due to the monotonicity of $g$. Use~\eqref{eqn:flux_x} and~\eqref{eqn:flux_y} to transform the right hand side of~\eqref{eqn:fvm_scheme} into a convex linear combination of the terms $\ats{n}{l,m}$, where $(l,m) \in \{(i,j),(i-1,j),(i+1,j),(i,j+1),(i,j-1)\}$, and this yields an alternate form of the discrete scheme~\eqref{eqn:fvm_scheme}
	\begin{align}
	\ats{n+1}{i,j} ={}& \ats{n}{i,j}\left(1 - \mu_{i} \mathrm{M}_{i+1/2,j}^{\,x} - \lambda_{j} \mathrm{M}_{i,j+1/2}^{\,y} - \mu_{i} \mathrm{M}_{i-1/2,j}^{\,x} -  \lambda_{j} \mathrm{M}_{i,j-1/2}^{\,y}\right) \\ &+
	\ats{n}{i+1,j} \mu_{i} \mathrm{M}_{i+1/2,j}^{\,x} + \ats{n}{i,j+1} \lambda_{j} \mathrm{M}_{i,j+1/2}^{\,y} + \ats{n}{i-1,j} \mu_{i} \mathrm{M}_{i-1/2,j}^{\,x} + \ats{n}{i,j-1} \lambda_{j} \mathrm{M}_{i,j-1/2}^{\,y} \\
	&- f(\ats{n}{i,j}) \left(\int_{t_{n}}^{t_{n+1}}\fint_{K_{i,j}} \mathrm{div}(\boldsymbol{u})(t,\boldsymbol{x})\,\mathrm{d}t\,\mathrm{d}\boldsymbol{x} \right).
	\label{eqn:conv_comb}
	\end{align}

	\begin{definition}[Time--reconstruct]
		For a sequence of functions $(f_n)_{\{n \ge 0\}}$, where $f_{n} : X \rightarrow \mathbb{R}$, define the corresponding time--space reconstruct  $f_{h,\delta} : (0,T) \times X \rightarrow \mathbb{R}$ by, for every $t \in (t_{n},t_{n+1})$, $	f(t,\cdot) := f_{n}(\cdot)$
		\label{def:time_rec}
	\end{definition}
	The function $\alpha_{h,\delta} : \Omega_T \rightarrow \mathbb{R}$ is the time--space reconstruct corresponding to the sequence of functions $(\alpha_{h}^n)_{\{n \ge 0\}}$, where $\alpha_{h}^n(\boldsymbol{x}) := \alpha_{i,j}^{n}$ on $K_{i,j}$.

	\begin{subequations}
		\begin{theorem}[bounded variation]
			\label{thm:bv-alpha}
			Let $\mathrm{X}_{k} \times \mathrm{Y}_{h}$ be a two dimensional admissible grid, and assumptions~\ref{as.1}--\ref{as.3} and the Courant--Friedrichs--Lewy (CFL) condition $4 \delta \max_{i,j}( \frac{1}{k_{i}} + \frac{1}{h_j} ) \mathrm{Lip}(g) ||\boldsymbol{u}||_{L^\infty(\Omega_T)} \le 1$	hold. If $\alpha_0 \in L^\infty(\Omega)\cap BV_{\boldsymbol{x}}(\Omega)$, then $\alpha_{h,\delta}$ satisfies $|\alpha_{h,\delta}|_{BV_{x,y,t}} \le \mathscr{C}_{\mathrm{BV}}$,
			\label{eqn:alpha-bv}
		where $\mathscr{C}_{\mathrm{BV}}$ depends on $T$, $\alpha_0$, $f$, $g$, $||\nabla\boldsymbol{u}||_{L^1_tL^\infty(\Omega_T)}$, and $|\mathrm{div}(\boldsymbol{u})|_{L^1_tBV_{x,y}}$.
		\end{theorem}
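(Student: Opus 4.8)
Write $B_x(n):=\sum_{i,j}h_j|\alpha^{n}_{i+1,j}-\alpha^{n}_{i,j}|$ and $B_y(n):=\sum_{i,j}k_i|\alpha^{n}_{i,j+1}-\alpha^{n}_{i,j}|$ for the discrete spatial variations (the boundary contributions being trivial since $\boldsymbol u=\boldsymbol 0$ on $\partial\Omega$), and $V(N):=\sum_{i,j}k_ih_j\sum_{n=0}^{N-1}|\alpha^{n+1}_{i,j}-\alpha^{n}_{i,j}|$ for the discrete time variation. Since $\alpha_{h,\delta}$ is piecewise constant on the cylinders $(t_n,t_{n+1})\times K_{i,j}$, one checks that $|\alpha_{h,\delta}|_{L^1_tBV_{x,y}}=\sum_{n}\delta\bigl(B_x(n)+B_y(n)\bigr)$ and $|\alpha_{h,\delta}|_{L^1_{x,y}BV_t}\le V(N)$, so the theorem reduces to bounding $\sup_n\bigl(B_x(n)+B_y(n)\bigr)$ and $V(N)$. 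My plan is: (i) a uniform $L^\infty$ bound on the $\alpha^{n}_{i,j}$; (ii) a one--step recursion of the form~\eqref{eqn:int_rel} for $\beta(n):=B_x(n)+B_y(n)$; (iii) discrete Gr\"onwall in $n$; (iv) a direct bound of $V(N)$ using the flux representations~\eqref{eqn:flux_x}--\eqref{eqn:flux_y} together with step (iii).

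\textbf{$L^\infty$ bound.} In the convex--combination form~\eqref{eqn:conv_comb}, assumptions~\ref{as.1}--\ref{as.2} give $0\le\mathrm M^{\,x}_{i\pm1/2,j},\,\mathrm M^{\,y}_{i,j\pm1/2}\le\mathrm{Lip}(g)\|\boldsymbol u\|_{L^\infty(\Omega_T)}$ (the difference quotients $\mathrm D^{n}_{i,j}$ lie in $[0,\mathrm{Lip}(g)]$), so the CFL condition makes the five coefficients in~\eqref{eqn:conv_comb} nonnegative with sum $1$; since $|f(s)|\le|f(0)|+\mathrm{Lip}(f)|s|$ and $\|\mathrm{div}(\boldsymbol u)\|_{L^1_tL^\infty}\le2\|\nabla\boldsymbol u\|_{L^1_tL^\infty}$, I would then read off $|\alpha^{n+1}_{i,j}|\le\bigl(1+\mathrm{Lip}(f)\!\int_{t_n}^{t_{n+1}}\!\|\mathrm{div}(\boldsymbol u)(t,\cdot)\|_{L^\infty}\mathrm{d}t\bigr)\max_{(l,m)}|\alpha^{n}_{l,m}|+|f(0)|\!\int_{t_n}^{t_{n+1}}\!\|\mathrm{div}(\boldsymbol u)(t,\cdot)\|_{L^\infty}\mathrm{d}t$ and conclude $\|\alpha_{h,\delta}\|_{L^\infty(\Omega_T)}\le M$ by discrete Gr\"onwall, with $M$ --- and hence $\|f(\alpha_{h,\delta})\|_{L^\infty(\Omega_T)}\le M':=|f(0)|+\mathrm{Lip}(f)M$ --- depending only on $T,\alpha_0,f,\|\nabla\boldsymbol u\|_{L^1_tL^\infty}$.

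\textbf{One--step spatial estimate (the crux).} I would subtract~\eqref{eqn:conv_comb} at $(i+1,j)$ and $(i,j)$ and sort the outcome into: (a) a ``diagonal'' part in which the monotone weights $\mu_i\mathrm M^{\,x}$ multiply pure $x$--differences $\alpha^{n}_{\cdot+1,j}-\alpha^{n}_{\cdot,j}$ of $\alpha^{n}$ --- by monotonicity and the CFL condition these contribute, after multiplication by $h_j$ and summation, at most $B_x(n)$; (b) \emph{cross terms} from the $y$--flux, of the form $\lambda_j\bigl(\mathrm M^{\,y}_{i+1,j\pm1/2}\alpha^{n}_{i+1,j\pm1}-\mathrm M^{\,y}_{i,j\pm1/2}\alpha^{n}_{i,j\pm1}\bigr)$, which do not reduce to an $x$--difference because the $\mathrm M^{\,y}$ weights in columns $i$ and $i+1$ differ; (c) the source difference $f(\alpha^{n}_{i+1,j})\,\overline{\mathrm{div}\boldsymbol u}_{i+1,j}-f(\alpha^{n}_{i,j})\,\overline{\mathrm{div}\boldsymbol u}_{i,j}$, $\overline{\phantom{x}}$ denoting the space--time average over $(t_n,t_{n+1})\times K_{\cdot,\cdot}$. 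To handle (b) I would introduce the \emph{intermediate nodal flux} --- the numerical flux across the two cells $K_{i,j}$, $K_{i+1,j+1}$ meeting only at the vertex $(x_{i+1/2},y_{j+1/2})$ --- and insert it as a telescoping pivot, which rewrites each cross term as (an $\mathrm M^{\,y}$--weighted pure $x$--difference of $\alpha^{n}$) plus (a pure $y$--difference $\alpha^{n}_{\cdot,j+1}-\alpha^{n}_{\cdot,j}$ weighted by a difference of $\mathrm M^{\,y}$ coefficients between adjacent columns, which is $O(\mathrm{Lip}(g)\|\nabla\boldsymbol u(t,\cdot)\|_{L^\infty})$). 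For (c) I would split $f(\alpha^{n}_{i+1,j})\overline{\mathrm{div}\boldsymbol u}_{i+1,j}-f(\alpha^{n}_{i,j})\overline{\mathrm{div}\boldsymbol u}_{i,j}=\bigl(f(\alpha^{n}_{i+1,j})-f(\alpha^{n}_{i,j})\bigr)\overline{\mathrm{div}\boldsymbol u}_{i+1,j}+f(\alpha^{n}_{i,j})\bigl(\overline{\mathrm{div}\boldsymbol u}_{i+1,j}-\overline{\mathrm{div}\boldsymbol u}_{i,j}\bigr)$: the first summand is $\le\mathrm{Lip}(f)\|\mathrm{div}\boldsymbol u(t,\cdot)\|_{L^\infty}|\alpha^{n}_{i+1,j}-\alpha^{n}_{i,j}|$, and the second, after multiplying by $h_j$ and summing, is $\le M'$ times the $x$--variation of the cell averages of $\mathrm{div}(\boldsymbol u)$, hence $\le|\mathrm{div}(\boldsymbol u)(t,\cdot)|_{L^1_yBV_x}$ since projection onto a Cartesian grid does not increase $BV$. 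Running the symmetric computation for $\alpha^{n+1}_{i,j+1}-\alpha^{n+1}_{i,j}$, adding, and converting time averages back to time integrals should give
\[
\beta(n+1)\le\beta(n)+\int_{t_n}^{t_{n+1}}\!\Bigl(c_1\|\nabla\boldsymbol u(t,\cdot)\|_{L^\infty}\beta(n)+c_2|\mathrm{div}(\boldsymbol u)(t,\cdot)|_{BV_{x,y}}+c_3\|\nabla\boldsymbol u(t,\cdot)\|_{L^\infty}\Bigr)\mathrm{d}t,
\]
with $c_1,c_2,c_3$ depending only on $\mathrm{Lip}(f)$, $\mathrm{Lip}(g)$, $M$, $M'$ and the mesh--regularity constant $\widetilde c$ of Definition~\ref{defn:admis_grid}. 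I expect this to be the main obstacle: one must organise the decomposition so that the diagonal part is genuinely dominated by $B_x(n)+B_y(n)$ on a \emph{nonuniform} grid, and check that the nodal--flux pivot converts each cross term into pure $x$-- and $y$--differences with exactly the advertised weights (this is where the grid regularity $\widetilde c$ enters).

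\textbf{Gr\"onwall and time variation.} Discrete Gr\"onwall applied to the displayed inequality, with $\beta(0)\le|\alpha_0|_{BV_{\boldsymbol x}(\Omega)}$ (again cell averaging does not increase $BV$), yields $\sup_{0\le n\le N}\beta(n)\le\mathscr C_0:=\bigl(|\alpha_0|_{BV_{\boldsymbol x}(\Omega)}+c_2|\mathrm{div}(\boldsymbol u)|_{L^1_tBV_{x,y}}+c_3\|\nabla\boldsymbol u\|_{L^1_tL^\infty}\bigr)e^{c_1\|\nabla\boldsymbol u\|_{L^1_tL^\infty}}$, whence $|\alpha_{h,\delta}|_{L^1_tBV_{x,y}}=\sum_n\delta\beta(n)\le T\mathscr C_0$. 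For $V(N)$ I would use~\eqref{eqn:fvm_scheme} with~\eqref{eqn:flux_x}--\eqref{eqn:flux_y}: $|\alpha^{n+1}_{i,j}-\alpha^{n}_{i,j}|\le\mu_i|\mathrm F^{n}_{i+1/2,j}-\mathrm F^{n}_{i-1/2,j}|+\lambda_j|\mathrm G^{n}_{i,j+1/2}-\mathrm G^{n}_{i,j-1/2}|$, and each flux difference splits into $\mathrm M^{\,x}$-- (resp.\ $\mathrm M^{\,y}$--)weighted $x$-- (resp.\ $y$--)differences of $\alpha^{n}$ plus a term $(u^{n}_{i+1/2,j}-u^{n}_{i-1/2,j})f(\alpha^{n}_{i,j})$ with $|u^{n}_{i+1/2,j}-u^{n}_{i-1/2,j}|\le k_i\fint_{t_n}^{t_{n+1}}\|\nabla\boldsymbol u(t,\cdot)\|_{L^\infty}\,\mathrm{d}t$ (similarly for $\mathrm G$). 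Multiplying by $k_ih_j$, summing over $(i,j)$ and $n$, and using $\mu_i\mathrm M^{\,x}\le\tfrac14$, $\lambda_j\mathrm M^{\,y}\le\tfrac14$, $\mu_ik_i=\lambda_jh_j=\delta$, $\sum_{i,j}k_ih_j=|\Omega|$ and the Gr\"onwall bound gives $V(N)\le C\sum_n\delta\beta(n)+|\Omega|M'\|\nabla\boldsymbol u\|_{L^1_tL^\infty}\le CT\mathscr C_0+|\Omega|M'\mathscr C$. Adding the two bounds gives $|\alpha_{h,\delta}|_{BV_{x,y,t}}=|\alpha_{h,\delta}|_{L^1_{x,y}BV_t}+|\alpha_{h,\delta}|_{L^1_tBV_{x,y}}\le\mathscr C_{\mathrm{BV}}$ with $\mathscr C_{\mathrm{BV}}$ depending only on $T,\alpha_0,f,g,\|\nabla\boldsymbol u\|_{L^1_tL^\infty(\Omega_T)}$ and $|\mathrm{div}(\boldsymbol u)|_{L^1_tBV_{x,y}}$, which is the claim.
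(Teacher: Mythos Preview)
Your plan is correct and mirrors the paper's proof almost step for step: the $L^\infty$ bound via the convex--combination form~\eqref{eqn:conv_comb} is Proposition~\ref{prop:boundedness}, the one--step spatial recursion using the intermediate nodal flux to convert cross terms and the splitting of the source difference is exactly the content of Steps~1--5 in Proposition~\ref{prop:space_bv}, and your time--variation bound is Proposition~\ref{prop:temp_bv}. The only cosmetic deviation is that after inserting the nodal flux the resulting $x$--difference weights are the $\mathrm E_{\pm}$ quotients of~\eqref{eqn:E-diff} (whose signs are what make $1-c_{i,j}\ge 0$ under the CFL condition) rather than the $\mathrm M^{\,y}$ weights you name, and the paper's recursion~\eqref{eqn:alpha_sp_bv} has no standalone $c_3\|\nabla\boldsymbol u\|$ term---but neither point affects the argument.
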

	\end{subequations}
	\begin{remark}[Boundedness constant $\mathscr{C}_{\mathrm{BV}}$]
	\label{rem:constants}
	The exact dependency of $\mathscr{C}_{\mathrm{BV}}$ on the factors $T$, $\alpha_{0}$, and Lipschitz continuity of fluxes is obtained from the proofs of Proposition~\ref{prop:space_bv} and Proposition~\ref{prop:temp_bv}. The final expression for $\mathscr{C}_{\mathrm{BV}}$ is described by
	\begin{align}
	\mathscr{C}_{\mathrm{BV}} \le T \mathrm{B}_u \mathrm{B}_{\alpha,u}\left(1 + 4\mathrm{Lip}(g)\int_{0}^T ||\nabla \boldsymbol{u}||_{L^\infty(\Omega)}\,\mathrm{d}t\right)  + (\mathrm{Lip}(f) \alpha_M + f_{0} ) || \mathrm{div}(\boldsymbol{u})||_{L^1(\Omega_T)},
	\end{align}
	where $\mathscr{C} := \max\left(\mathrm{Lip}(f)\alpha_M + f_0,3\mathrm{Lip}(f) +  4\mathrm{Lip}(g)(\widetilde{c} + 1) + 1  \right)$, $\mathrm{B}_{\boldsymbol{u}}: = \exp\left(\mathscr{C}\|\nabla\boldsymbol{u}\|_{L^1_tL^\infty(\Omega_T)} \right)$, and $\mathrm{B}_{\alpha,\boldsymbol{u}}: =  |\alpha_0|_{BV_{x,y}} + \mathscr{C} ||\mathrm{div}(\boldsymbol{u})||_{L^1_tBV_{x,y}}$. However, the precise form of $\mathscr{C}_{\mathrm{BV}}$ has little impact on compactness arguments used to extract a strongly convergent subsequence from the family of time--space functions $\{\alpha_{h,\delta}\}$ -- for this purpose it is sufficient that $|\alpha_{h,\delta}|_{BV_{x,y,t}}$ is bounded by a global constant independent of the discretisation factors.
\end{remark}
 Assumptions~\ref{as.1},~\ref{as.2}, and boundedness of~$||\boldsymbol{u}||_{L_t^1L^\infty(\Omega_T)}$ described by~\ref{as.3} are classical in the literature (see~\cite[p.~153]{eymard} and~\cite[p.~130]{claire_1999} for more details). The crucial assumptions of Theorem~\ref{thm:bv-alpha} are the boundedness of (a) $||\nabla \boldsymbol{u}||_{L_t^1L^\infty(\Omega_T)}$ and (b) $|\mathrm{div}(\boldsymbol{u})|_{L_t^1BV_{x,y}}$ described by~\ref{as.3}. Condition (a) is not unexpected since a conventional assumption in estimating $BV$ seminorm of finite volume approximations of nonlinear conservation laws of the form~\eqref{eqn:cons_law} is that $\boldsymbol{u} \in \mathscr{C}^1(\mathbb{R}^d \times \mathbb{R}^{+})$~\cite{claire_1999,eymard}, which yields (a) on compact subsets of $\mathbb{R}^d \times \mathbb{R}^{+}$. Though (b) apparently seems to be restrictive, it is pivotal in bounding the  difference of $\mathrm{div}(\boldsymbol{u})$ between two control volumes (see~\eqref{eqn:kfg}). Indeed, we can relax this assumption to $\mathrm{div}(\boldsymbol{u}) \in L^1_tL^\infty(\Omega_T)$, which is the formally correct choice and is used in the seminal work~\cite{DiPerna1989511} by DiPerna and Lions. However, with this less restrictive assumption, Proposition II.1 in DiPerna and Lions~\cite{DiPerna1989511} only guarantees the existence of a weak solution $\alpha \in L^\infty_t L^1(\Omega_T)$. Therefore, (b) is justified for establishing a stronger convergence of the finite volume solutions  and the higher $BV$ regularity of the limiting solution.
	\section{Proof of Theorem~\ref{thm:bv-alpha}}
	\label{sec:b_var}
	We let the hypotheses of Theorem~\ref{thm:bv-alpha} to hold throughout the sequel of this article and recall that $\alpha_{h,\delta}$ is the time--reconstruct in the sense of Definition~\ref{def:time_rec}. The proof of Theorem~\ref{thm:bv-alpha} is accomplished through three steps: establish the
	\begin{enumerate}[label= \Large${\bullet}$,ref=$\mathrm{(C.\arabic*)}$,leftmargin=\widthof{(C.4)}+3\labelsep]
		\item  boundedness of $\alpha_{h,\delta}$ in Proposition~\ref{prop:boundedness},
		\vspace{-0.2cm}
		\item  spatial $BV$ estimate of $\alpha_{h,\delta}$ in Proposition~\ref{prop:space_bv}, and
		\vspace{-0.2cm}
		\item  temporal $BV$ estimate of $\alpha_{h,\delta}$ in Proposition~\ref{prop:temp_bv}.
	\end{enumerate}
	%\begin{enumerate}[label= $\mathrm{(I.\arabic*)}$,ref=$\mathrm{(I.\arabic*)}$,leftmargin=\widthof{(I.4)}+3\labelsep]
	%	\item \label{s.1} boundedness of $\alpha_{h,\delta}$ in Proposition~\ref{prop:boundedness},
	%	\item \label{s.2} spatial $BV$ estimate of $\alpha_{h,\delta}$ in Proposition~\ref{prop:space_bv}, and
	%	\item \label{s.3} temporal $BV$ estimate of $\alpha_{h,\delta}$ in Proposition~\ref{prop:temp_bv}.
	%\end{enumerate} 
	\begin{proposition}[boundedness]
		\label{prop:boundedness}
		The function $\alpha_{h,\delta}$ satisfies, for every $0 \le t \le T$,
		\begin{align}
		\label{eqn:sup_alpha}
		\left|\alpha_{h,\delta}(t,\cdot)\right|_{L^\infty(\Omega)} \le{}& \mathrm{B}_{f,\boldsymbol{u}} \left(a^0 + f_0 \|\mathrm{div}{(\boldsymbol{u}})\|_{L^1_tL^\infty(\Omega_T)} \right), 
		\end{align}
		where $\mathrm{B}_{f,\boldsymbol{u}} := \exp\left(\mathrm{Lip}(f) \|\mathrm{div}{(\boldsymbol{u}})\|_{L^1_tL^\infty(\Omega_T)} \right)$, $a^0 = \|\alpha_0\|_{L^\infty(\Omega)}$, and $f_0  = f(0)$.
	\end{proposition}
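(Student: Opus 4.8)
The plan is to read off a discrete Gr\"onwall inequality from the convex-combination form~\eqref{eqn:conv_comb} of the scheme and then iterate it. Write $A^{n} := \|\alpha_{h}^{n}\|_{L^{\infty}(\Omega)} = \max_{i,j}|\ats{n}{i,j}|$. First I would verify that, under the CFL condition, the first five coefficients on the right-hand side of~\eqref{eqn:conv_comb} are nonnegative and sum to one. Nonnegativity of $\mathrm{M}_{i\pm1/2,j}^{x}$ and $\mathrm{M}_{i,j\pm1/2}^{y}$ follows from assumption~\ref{as.2} (as already observed after~\eqref{eqn:diff_quo}); from~\eqref{eqn:diff_quo} and assumption~\ref{as.1} one has $0 \le \mathrm{D}_{i,j}^{n}(\cdot,\cdot)\le\mathrm{Lip}(g)$, and since $u_{i-1/2,j}^{n+}+u_{i-1/2,j}^{n-}=|u_{i-1/2,j}^{n}|\le\|\boldsymbol{u}\|_{L^{\infty}(\Omega_{T})}$ (and likewise for $v$), this gives $\mathrm{M}_{i\pm1/2,j}^{x},\,\mathrm{M}_{i,j\pm1/2}^{y}\le\mathrm{Lip}(g)\|\boldsymbol{u}\|_{L^{\infty}(\Omega_{T})}$. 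Hence the self-coefficient $1 - \mu_{i}\mathrm{M}_{i+1/2,j}^{x} - \lambda_{j}\mathrm{M}_{i,j+1/2}^{y} - \mu_{i}\mathrm{M}_{i-1/2,j}^{x} - \lambda_{j}\mathrm{M}_{i,j-1/2}^{y}$ is at least $1 - 4\delta\max_{i,j}(\tfrac{1}{k_{i}}+\tfrac{1}{h_{j}})\mathrm{Lip}(g)\|\boldsymbol{u}\|_{L^{\infty}(\Omega_{T})}\ge 0$; at cells meeting $\partial\Omega$ the relevant velocity factors vanish because $\boldsymbol{u}=\boldsymbol{0}$ on $\partial\Omega$, so the combination is simply over fewer cells and the argument is unchanged.

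Next I would take absolute values in~\eqref{eqn:conv_comb}. Because the bracketed part is a convex combination of the values $\ats{n}{l,m}$, the triangle inequality gives $|\ats{n+1}{i,j}| \le A^{n} + |f(\ats{n}{i,j})|\,|\theta_{i,j}^{n}|$, where $\theta_{i,j}^{n}:=\int_{t_{n}}^{t_{n+1}}\fint_{K_{i,j}}\mathrm{div}(\boldsymbol{u})(t,\boldsymbol{x})\,\mathrm{d}\boldsymbol{x}\,\mathrm{d}t$. Then I bound $|f(\ats{n}{i,j})| \le f_{0}+\mathrm{Lip}(f)|\ats{n}{i,j}| \le f_{0}+\mathrm{Lip}(f)A^{n}$ with $f_{0}=|f(0)|$, and $|\theta_{i,j}^{n}| \le \int_{t_{n}}^{t_{n+1}}\|\mathrm{div}(\boldsymbol{u})(t,\cdot)\|_{L^{\infty}(\Omega)}\,\mathrm{d}t=:\Theta^{n}$. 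Taking the maximum over $(i,j)$ yields the recursion
\[
A^{n+1} \;\le\; \bigl(1+\mathrm{Lip}(f)\,\Theta^{n}\bigr)\,A^{n} + f_{0}\,\Theta^{n}.
\]

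Finally I would resolve this recursion by induction on $n$. Unrolling gives
\[
A^{n} \;\le\; \Bigl(\prod_{m=0}^{n-1}\bigl(1+\mathrm{Lip}(f)\Theta^{m}\bigr)\Bigr)A^{0} \;+\; f_{0}\sum_{m=0}^{n-1}\Theta^{m}\prod_{\ell=m+1}^{n-1}\bigl(1+\mathrm{Lip}(f)\Theta^{\ell}\bigr),
\]
and using $1+x\le e^{x}$ together with $\sum_{m\ge 0}\Theta^{m}\le\|\mathrm{div}(\boldsymbol{u})\|_{L^{1}_{t}L^{\infty}(\Omega_{T})}$ bounds every product by $\mathrm{B}_{f,\boldsymbol{u}}=\exp(\mathrm{Lip}(f)\|\mathrm{div}(\boldsymbol{u})\|_{L^{1}_{t}L^{\infty}(\Omega_{T})})$, so that $A^{n}\le\mathrm{B}_{f,\boldsymbol{u}}\bigl(A^{0}+f_{0}\|\mathrm{div}(\boldsymbol{u})\|_{L^{1}_{t}L^{\infty}(\Omega_{T})}\bigr)$. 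Since $|\alpha_{i,j}^{0}|=\bigl|\fint_{K_{i,j}}\alpha_{0}\bigr|\le\|\alpha_{0}\|_{L^{\infty}(\Omega)}=a^{0}$ by~\eqref{eqn:alpha_ini} we have $A^{0}\le a^{0}$, and since $\alpha_{h,\delta}(t,\cdot)=\alpha_{h}^{n}$ on $(t_{n},t_{n+1})$, the estimate~\eqref{eqn:sup_alpha} follows for all $0\le t\le T$.

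The whole argument is essentially routine; the only place calling for a little care is arranging the source term $f(\ats{n}{i,j})\theta_{i,j}^{n}$ so that the recursion closes --- splitting $f$ as $f(0)$ plus a Lipschitz remainder and absorbing that remainder into the coefficient of $A^{n}$ --- and keeping the telescoping product in the Gr\"onwall step straight. Neither is a genuine obstacle, so I expect no real difficulty here.
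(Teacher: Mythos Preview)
Your proposal is correct and follows essentially the same route as the paper: use the CFL condition to ensure the coefficients in~\eqref{eqn:conv_comb} form a convex combination, split $|f(\ats{n}{i,j})|\le f_0+\mathrm{Lip}(f)A^n$, obtain the one-step recursion $A^{n+1}\le(1+\mathrm{Lip}(f)\Theta^n)A^n+f_0\Theta^n$, and close by induction/discrete Gr\"onwall. Your write-up is in fact more detailed than the paper's (you spell out the product bound and the boundary-cell case), but the argument is the same.
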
 
	\noindent The proof of Proposition~\ref{prop:boundedness} is based on writing $\alpha_{i,j}^{n+1}$ as convex linear combination of values of $\alpha_{h,\delta}$ at the previous time step. 
	\begin{proof}
		The Lipschitz continuity of the function $g$ yields $|\mathrm{M}_{i-1/2,j}^{\,x}| \le \mathrm{Lip}(g) |u_{i-1/2,j}^n|$ and $|\mathrm{M}_{i,j-1/2}^{\,y}| \le \mathrm{Lip}(g) |v_{i,j-1/2}^n|$ for $i = 0,\ldots,I$ and $j = 0,\ldots,J$. 
		The CFL condition in Theorem~\ref{thm:bv-alpha} ensures that the coefficient of~$\ats{n}{i,j}$ in~\eqref{eqn:conv_comb} is nonnegative. Use the properties of convex linear combination of $\{\alpha^n_{l,m}\}$, where $(l,m) \in \{(i,j-1),(i,j+1),(i-1,j),(i+1,j) \}$ in~\eqref{eqn:conv_comb} and the Lipschitz continuity of $f$ to obtain 
		\begin{align}
		\sup_{i,j} \left|\ats{n+1}{i,j}\right| \le{}& \sup_{i,j} \left|\ats{n}{i,j}\right|\left[1 + \mathrm{Lip}(f) \int_{t_{n}}^{t_{n+1}}||\mathrm{div}{(\boldsymbol{u}})(t,\cdot)||_{L^\infty(\Omega)}\,\mathrm{d}t  \right]  \\
		&+ f_0 \int_{t_{n}}^{t_{n+1}}||\mathrm{div}{(\boldsymbol{u}})(t,\cdot)||_{L^\infty(\Omega)}\,\mathrm{d}t. 
		\label{eqn:alpha-sup}
		\end{align}
		An application of induction on~\eqref{eqn:alpha-sup} with $n$ as the index and~\eqref{eqn:alpha_ini} yield~\eqref{eqn:sup_alpha}.
	\end{proof}

	% $|\Pi_h^0(\beta)|_{BV_{\boldsymbol{x}}(\Omega)} \le |\beta|_{BV_{\boldsymbol{x}}(\Omega)}$.
	%We start with a short lemma that connects the variation between the averages of a function $w \in BV(x_{i-3/2},x_{i+1/2})$ in two consecutive intervals $(x_{i-3/2},x_{i-1/2})$ and $(x_{i-1/2},x_{i+1/2})$ to the $BV$--norm of $w$ in $(x_{i-3/2},x_{i+1/2})$. Lemma~\ref{lemma:partial_w} is crucial in transforming the discrete differences of the velocity vector $\boldsymbol{u}$ across two horizontal intervals into total--variation seminorm. 
	%\begin{lemma}
	%	\label{lemma:partial_w}
	%	If $w \in BV_x(x_{i-3/2},x_{i+1/2})$, then it holds
	%	\begin{align}
	%	\left|\fint_{x_{i-1/2}}^{x_{i+1/2}} w(s)\,\mathrm{d}s - \fint_{x_{i-3/2}}^{x_{i-1/2}} w(s)\,\mathrm{d}s \right| &\le |w|_{BV_x(x_{i-3/2},x_{i+1/2})}. 
	%	\label{eqn:partialx_lemma}
	%	\end{align}
	%\end{lemma}
	%\begin{proof}
	%	Write the integrand as follows:
	%	\begin{align}
	%	\hspace{-0.8cm}\fint_{x_{i-1/2}}^{x_{i+1/2}} w(s)\,\mathrm{d}s - \fint_{x_{i-3/2}}^{x_{i-1/2}} w(s)\,\mathrm{d}s ={}& \dfrac{1}{k_{i}k_{i-1}} \left( \int_{x_{i-3/2}}^{x_{i-1/2}}\int_{x_{i-1/2}}^{x_{i+1/2}} \left(w(s) - w(r) \right)\,\mathrm{d}s\,\mathrm{d}r \right). 
	%	\label{eqn:partial_x-interm}
	%	\end{align}
	%	The difference $w(s) - w(r)$ can be expressed by $w'([r,s])$, where $w' \in \mathcal{M}(x_{i-3/2},x_{i+1/2})$ is the total variation measure of $w$ (see Theorem~\ref{appen_thm.b}), and use the bound $|w'([r,s])| \le |w'|(x_{i-3/2},x_{i+1/2})$ for $s,r \in (x_{i-3/2},x_{i+1/2})$ and~\eqref{eqn:partial_x-interm} to obtain \eqref{eqn:partialx_lemma}.
	%\end{proof}
	
	\begin{proposition}[spatial variation]
		\label{prop:space_bv}
		The function $\alpha_{h,\delta}$ satisfies $	|\alpha_{h,\delta}(t,\cdot)|_{BV_{x,y}} \le  \mathrm{B}_{\boldsymbol{u}} ( |\alpha_0|_{BV_{x,y}} + \mathscr{C}|\mathrm{div}(\boldsymbol{u})|_{L^1_tBV_{x,y}} )$ for every $0 \le t \le T$,
%		\begin{align}
%		|\alpha_{h,\delta}(t,\cdot)|_{BV_{x,y}} \le  \mathrm{B}_{\boldsymbol{u}} \left( |\alpha_0|_{BV_{x,y}} + \mathscr{C}|\mathrm{div}(\boldsymbol{u})|_{L^1_tBV_{x,y}}  \right),
%		\label{eqn:al_space-bv}
%		\end{align}
		where
		$\mathrm{B}_{\boldsymbol{u}}: = \exp\left(\mathscr{C}\|\nabla\boldsymbol{u}\|_{L^1_tL^\infty(\Omega_T)} \right)$ and $\mathscr{C}$ is defined in  Remark~\ref{rem:constants}.
	\end{proposition}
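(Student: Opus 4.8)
The plan is to pass to the discrete counterparts of the two directional seminorms. Since $\alpha^n_h$ is piecewise constant on the grid, one has $|\alpha^n_h|_{L^1_yBV_x} = \mathcal{B}_x(n) := \sum_j h_j\sum_i |\alpha^n_{i+1,j}-\alpha^n_{i,j}|$ and $|\alpha^n_h|_{L^1_xBV_y} = \mathcal{B}_y(n) := \sum_i k_i\sum_j |\alpha^n_{i,j+1}-\alpha^n_{i,j}|$, hence $|\alpha^n_h|_{BV_{x,y}} = \mathcal{B}_x(n)+\mathcal{B}_y(n)$, and $\mathcal{B}_x(0)+\mathcal{B}_y(0)\le|\alpha_0|_{BV_{x,y}}$ because the cell averaging~\eqref{eqn:alpha_ini} does not increase directional total variation. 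It therefore suffices to prove a one–step estimate
\[
\mathcal{B}_x(n{+}1)+\mathcal{B}_y(n{+}1) \le \Big(1+\mathscr{C}\!\int_{t_n}^{t_{n+1}}\!\|\nabla\boldsymbol{u}(t,\cdot)\|_{L^\infty(\Omega)}\,\mathrm{d}t\Big)\big(\mathcal{B}_x(n)+\mathcal{B}_y(n)\big)+\mathscr{C}\!\int_{t_n}^{t_{n+1}}\!|\mathrm{div}(\boldsymbol{u})(t,\cdot)|_{BV_{x,y}}\,\mathrm{d}t,
\]
and then to close the argument by a discrete Gr\"onwall / induction step on $n$ (as in Proposition~\ref{prop:boundedness}), which reproduces the claimed bound with $\mathrm{B}_{\boldsymbol u}$ and $\mathscr{C}$ as in Remark~\ref{rem:constants}.

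To obtain the one–step estimate I would start from the increment form of the scheme, equivalent to~\eqref{eqn:conv_comb} after inserting~\eqref{eqn:flux_x}--\eqref{eqn:flux_y}: writing $D_x\alpha^n_{i,j}:=\alpha^n_{i+1,j}-\alpha^n_{i,j}$, $D_y\alpha^n_{i,j}:=\alpha^n_{i,j+1}-\alpha^n_{i,j}$,
\[
\alpha^{n+1}_{i,j} = \alpha^n_{i,j}+a^+_{i,j}D_x\alpha^n_{i,j}-a^-_{i,j}D_x\alpha^n_{i-1,j}+b^+_{i,j}D_y\alpha^n_{i,j}-b^-_{i,j}D_y\alpha^n_{i,j-1}-f(\alpha^n_{i,j})\,S^n_{i,j},
\]
with $a^\pm_{i,j}=\mu_i\mathrm{M}^x_{i\pm1/2,j}\ge0$, $b^\pm_{i,j}=\lambda_j\mathrm{M}^y_{i,j\pm1/2}\ge0$ (nonnegativity by~\ref{as.2}) and $S^n_{i,j}=\int_{t_n}^{t_{n+1}}\fint_{K_{i,j}}\mathrm{div}(\boldsymbol{u})\,\mathrm{d}\boldsymbol{x}\,\mathrm{d}t$. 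Forming $D_x\alpha^{n+1}_{i,j}$, the $x$–flux part reorganises into a subconvex combination of $D_x\alpha^n_{l,j}$, $l\in\{i-1,i,i+1\}$; using the CFL condition to keep the diagonal coefficient nonnegative, reindexing $i\mapsto i\pm1$ (boundary terms vanish since $\boldsymbol{u}=\boldsymbol 0$ on $\partial\Omega$ makes $\mathrm{M}^x$ vanish on boundary faces), and telescoping the coefficient defects, this part contributes exactly $\mathcal{B}_x(n)$ upon summation against the weights $h_j$. The source difference $f(\alpha^n_{i+1,j})S^n_{i+1,j}-f(\alpha^n_{i,j})S^n_{i,j}$ I split as $[f(\alpha^n_{i+1,j})-f(\alpha^n_{i,j})]S^n_{i+1,j}+f(\alpha^n_{i,j})[S^n_{i+1,j}-S^n_{i,j}]$: the first is $\le 2\,\mathrm{Lip}(f)\,|D_x\alpha^n_{i,j}|\int_{t_n}^{t_{n+1}}\|\nabla\boldsymbol{u}\|_{L^\infty}$ and feeds the multiplicative factor, while the second is handled by the $L^\infty$ bound of Proposition~\ref{prop:boundedness} and the elementary comparison $\sum_j h_j\sum_i|\fint_{K_{i+1,j}}\phi-\fint_{K_{i,j}}\phi|\le2|\phi|_{L^1_yBV_x}$ of adjacent cell averages with the variation, producing the $|\mathrm{div}(\boldsymbol{u})|_{BV_{x,y}}$ term with constant $\mathrm{Lip}(f)\alpha_M+f_0$.

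The genuinely delicate contribution is the cross term $b^+_{i+1,j}D_y\alpha^n_{i+1,j}-b^+_{i,j}D_y\alpha^n_{i,j}$ (and its $b^-$ analogue): an $x$–difference of a $y$–numerical–flux, which is not a priori a convex combination of $D_x\alpha^n$ or $D_y\alpha^n$ increments. Here I would use $b^+_{i,j}D_y\alpha^n_{i,j}=\lambda_j\big(v^n_{i,j+1/2}f(\alpha^n_{i,j})-G^n_{i,j+1/2}\big)$ (and its counterpart at the face $j-1/2$) to rewrite these as $x$–differences of the numerical fluxes $G^n$, and then insert the \emph{intermediate nodal flux} $G^n_{i+1/2,j\pm1/2}$ — the numerical $y$–flux built from the velocity of column $i{+}1$ together with the unknowns of column $i$, i.e.\ the flux across control volumes that meet only at the vertex $(x_{i+1/2},y_{j\pm1/2})$. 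Writing $G^n_{i+1,j+1/2}-G^n_{i,j+1/2}=(G^n_{i+1,j+1/2}-G^n_{i+1/2,j+1/2})+(G^n_{i+1/2,j+1/2}-G^n_{i,j+1/2})$, the first bracket, by Lipschitz continuity of $g$ and nonnegativity of the difference quotients~\eqref{eqn:diff_quo}, is a nonnegative–coefficient combination of the $x$–increments $D_x\alpha^n_{i,j}$, $D_x\alpha^n_{i,j+1}$; keeping the outer $y$–difference in $j$, this re-enters the $D_x$ bookkeeping and, after the same reindexing and telescoping, is absorbed without net growth — it cancels precisely the reduction the $b^\pm$ weights produce in the diagonal coefficient. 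The second bracket carries a factor $v^n_{i+1,j\pm1/2}-v^n_{i,j\pm1/2}$, bounded by $(k_i+k_{i+1})\tfrac1\delta\int_{t_n}^{t_{n+1}}\|\nabla\boldsymbol{u}\|_{L^\infty}$, so that — after one further summation by parts in $j$ that exploits the outer $y$–difference to eliminate an otherwise spurious $h^{-1}$ — it contributes $\lesssim\mathrm{Lip}(g)(\widetilde c+1)\int_{t_n}^{t_{n+1}}\|\nabla\boldsymbol{u}\|_{L^\infty}(\mathcal{B}_x(n)+\mathcal{B}_y(n))$, the admissibility constant $\widetilde c$ entering when $h_j$–weighted sums of $D_y\alpha^n$ are traded for $k_i$–weighted ones.

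Collecting the three contributions yields the one–step estimate for $\mathcal{B}_x(n{+}1)$; the estimate for $\mathcal{B}_y(n{+}1)$ follows by interchanging the roles of $x$ and $y$, and adding the two and iterating in $n$ finishes the proof. I expect the bookkeeping of the cross terms to be the main obstacle — specifically, verifying that the ``$g$–difference'' half genuinely re-enters the $D_x$ convex combination with no residual, and organising the ``velocity–difference'' half (via the outer $y$–difference and a summation by parts) so that the naive $h^{-1}$ loss cancels; once those cancellations are identified, the remainder is a routine discrete Gr\"onwall iteration.
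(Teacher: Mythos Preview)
Your overall architecture --- a one--step recursion on $\mathcal{B}_x(n)+\mathcal{B}_y(n)$ closed by a discrete Gr\"onwall argument, with the cross term handled via an intermediate nodal flux --- is exactly the paper's. The $x$--flux convex combination, the source splitting, and the ``$g$--difference'' half of your nodal--flux decomposition all work as you describe.

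The gap is in the ``velocity--difference'' half. When you invoke the identity $b^+_{i,j}D_y\alpha^n_{i,j}=\lambda_j\big(v^n_{i,j+1/2}f(\alpha^n_{i,j})-G^n_{i,j+1/2}\big)$ you silently discard the $v^n_{i,j+1/2}f(\alpha^n_{i,j})$ contribution: taking the $x$--difference of this (together with its $j-1/2$ counterpart) produces an extra term $f(\alpha^n_{i+1,j})\!\int\!\fint_{K_{i+1,j}}\partial_y v - f(\alpha^n_{i,j})\!\int\!\fint_{K_{i,j}}\partial_y v$ that never reappears in your bookkeeping. Without it, your claimed bound on the velocity--difference half is wrong. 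After the ``summation by parts in $j$'' (which amounts to the identity $ab-cd=\tfrac{a+c}{2}(b-d)+\tfrac{a-c}{2}(b+d)$), one of the two resulting pieces is
\[
\frac{g^+_{i,j+1/2}+g^+_{i,j-1/2}}{2}\Big[\textstyle\int\!\fint_{K_{i+1,j}}\partial_y v-\int\!\fint_{K_{i,j}}\partial_y v\Big],
\]
and the bracket is \emph{not} controlled by $\|\nabla\boldsymbol{u}\|_{L^\infty}$ times something small: summing over $i,j$ with weight $h_j$ yields a quantity of order $1$ per time step (hence $O(T/\delta)$ after iteration), unless one has $|\partial_y v|_{L^1_tBV_x}$ bounded --- which \ref{as.3} does \emph{not} provide (only $|\mathrm{div}(\boldsymbol{u})|_{BV}$ is assumed).

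The repair, and the paper's Step~4, is precisely to retain the dropped $f(\alpha)\partial_y v$ difference and combine it with the piece above: the coefficient of $\int\!\fint_{K_{i,j}}\partial_y v$ then becomes $\tfrac12\big(g^+_{i,j+1/2}+g^+_{i,j-1/2}-2f(\alpha^n_{i,j})\big)$, which by the consistency $g(a,a)=f(a)$ and Lipschitz continuity of $g$ is $O\big(\mathrm{Lip}(g)(|D_y\alpha^n_{i,j}|+|D_y\alpha^n_{i,j-1}|)\big)$. Now only $\|\partial_y v\|_{L^\infty}$ is needed, and the contribution is indeed $\lesssim \mathrm{Lip}(g)\int_{t_n}^{t_{n+1}}\|\nabla\boldsymbol{u}\|_{L^\infty}\,\mathcal{B}_y(n)$. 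In short: the cancellation you expect in the convex--combination half is real, but the one you assert in the velocity--difference half is not --- it requires pairing with the $f\,\partial_y v$ term you omitted.
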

	\noindent The proof of Proposition~\ref{prop:space_bv} is achieved in five intermediate steps, which are as follows.
	\begin{enumerate}[label= $\mathbf{Step\;\;\arabic*}$,ref=$\mathbf{Step\;\;\arabic*}$,leftmargin=\widthof{(Step) 1}+3\labelsep]
		\item\label{ps.1} Write the difference $\ats{n+1}{i,j} - \ats{n}{i,j}$ as $\ats{n+1}{i,j} - \ats{n}{i,j} := \ats{n}{i,j} - \ats{n}{i-1,j} - \mathrm{H}_{i,j} - \mathrm{J}_{i,j}$, where $\mathrm{H}_{i,j}$ collects the variation of $\ats{n}{i,j}$ in $x$--direction and $\mathrm{J}_{i,j}$  the variation of $\ats{n}{i,j}$ in $y$--direction. 
		\item\label{ps.2} Use the intermediate nodal fluxes (see Figure~\ref{fig:nodal_flux}) to transform the vertical differences in $\mathrm{J}_{i,j}$ into horizontal differences. 
		\item\label{ps.3} Use~\ref{ps.1} and~\ref{ps.2} to write $\ats{n+1}{i,j} - \ats{n+1}{i-1,j}$ as a sum of (a) convex linear combinations of $\ats{n}{i,j} - \ats{n}{l,m}$, where $(l,m) \in \left\{(i,j-1),(i,j+1),(i-1,j),(i+1,j) \right\}$ and (b) the variation of $\partial_x u$ and $\partial_y v$ (recall that $\boldsymbol{u} = (u,v)$).
		\item\label{ps.4} Estimate variations of $\partial_x u$ and $\partial_y v$ in terms of the $BV$ seminorm of $\mathrm{div}(\boldsymbol{u})$.
		\item\label{ps.5} Combine the estimates from~\ref{ps.3} and~\ref{ps.4} to bound $|\alpha_{h,\delta}(t_{n+1},\cdot)|_{BV_{x,y}}$ in terms of  $|\alpha_{h,\delta}(t_{n},\cdot)|_{BV_{x,y}}$ and $|\mathrm{div}(\boldsymbol{u})|_{L_t^1BV_{x,y}}$  (see~\eqref{eqn:alpha_sp_bv}) and apply induction on $n$ to obtain the desired conclusion.
	\end{enumerate}
	\begin{proof}
		\hyperref[ps.1]{\textbf{Step 1:}}
		Consider the difference between the scheme~\eqref{eqn:fvm_scheme} written for $\ats{n+1}{i,j}$ and $\ats{n+1}{i-1,j}$
		\begin{align}
		\ats{n+1}{i,j} - \ats{n+1}{i-1,j} ={}& \ats{n}{i,j} - \ats{n}{i-1,j} - \left[ \mu_{i}(\mathrm{F}_{i+1/2,j} - \mathrm{F}_{i-1/2,j}) - \mu_{i-1}(\mathrm{F}_{i-1/2,j} - \mathrm{F}_{i-3/2,j}) \right]\\
		&- \left[\lambda_{j} (\mathrm{G}_{i,j+1/2} - \mathrm{G}_{i,j-1/2}) - \lambda_{j} (\mathrm{G}_{i-1,j-1/2} - \mathrm{G}_{i-1,j-3/2}) \right] \\
		=:{}& \ats{n}{i,j} - \ats{n}{i-1,j} - \mathrm{H}_{i,j} - \mathrm{J}_{i,j}.
		\label{eqn:diff_h}
		\end{align}
		\begin{subequations}
			The term $\mathrm{H}_{i,j}$ gathers the variation in the $x$--direction; use~\eqref{eqn:flux_x} to rewrite $\mathrm{H}_{i,j}$ as
			\begin{multline}
			\mathrm{H}_{i,j} ={} \mu_{i} \mathrm{M}_{i-1/2,j}^{\,x} \left( \ats{n}{i,j} - \ats{n}{i-1,j} \right) + \mu_{i}\mathrm{M}_{i+1/2,j}^{\,x} \left( \ats{n}{i,j} - \ats{n}{i+1,j} \right)  \\
			+ \mu_{i-1} \mathrm{M}_{i-1/2,j}^{\,x} \left( \ats{n}{i,j} - \ats{n}{i-1,j} \right) + \mu_{i-1}\mathrm{M}_{i-3/2,j}^{\,x} \left( \ats{n}{i-2,j} - \ats{n}{i-1,j} \right) + \mathrm{K}_{i,j}^f,\hspace{1cm}
			\label{eqn:space_var}
			\end{multline}
			where
			\begin{align}
			\mathrm{K}_{i,j}^f :=  f(\ats{n}{i,j}) \int_{t_{n}}^{t_{n+1}}\fint_{K_{i,j}}\partial_{x} u(t,\boldsymbol{x})\,\mathrm{d}\boldsymbol{x}\,\mathrm{d}t -  f(\ats{n}{i-1,j}) \int_{t_{n}}^{t_{n+1}}\fint_{K_{i-1,j}} \partial_{x} u(t,\boldsymbol{x})\,\mathrm{d}\boldsymbol{x}\,\mathrm{d}t.
			\label{eqn:kij_defn}
			\end{align}
		\end{subequations}
		\begin{figure}[htp]
			\centering
			\begin{subfigure}[b]{0.49\textwidth}
				\centering
				\includegraphics[scale=1]{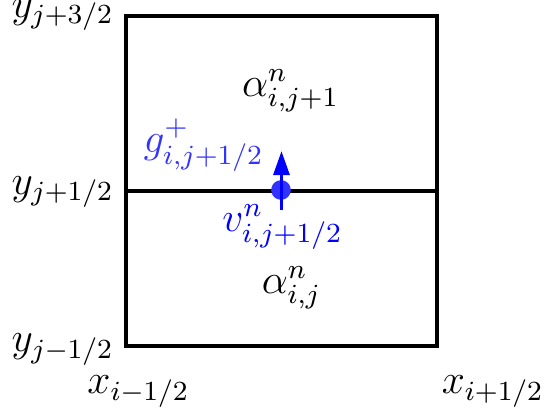}
				\caption{}
				\label{fig:v_flux_p}
			\end{subfigure}
			\begin{subfigure}[b]{0.49\textwidth}
				\centering
				\includegraphics[scale=1]{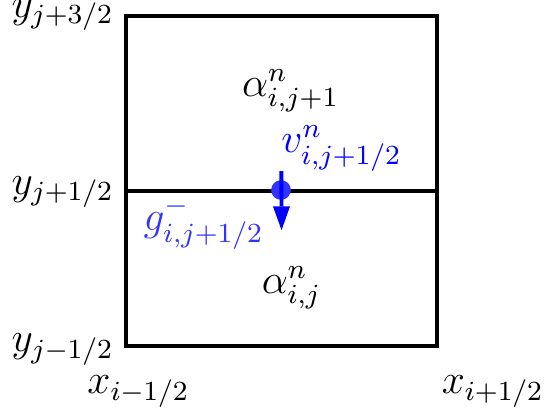}
				\caption{}
				\label{fig:v_flux_m}
			\end{subfigure}
			\caption{Spatial locations of the numerical fluxes $g_{i,j+1/2}^+$ and $g_{i,j+1/2}^{-}$.}
		\end{figure}
		\hyperref[ps.2]{\textbf{Step 2:}} The goal of this step is to transform the horizontal difference of variations between the vertical levels $(i-r,j+s)$ and $(i-r,j-s)$, where $(r,s) \in \{ (0,1/2),(-1,-1/2)\}$ appearing in  $\mathrm{J}_{i,j}$ of~\eqref{eqn:diff_h} so that the resulting terms can be combined to form a convex linear combination of differences of $\alpha_{h,\delta}(t_{n},\cdot)$ between neighbouring rectangles.   Use~\eqref{eqn:Gij-Jij} to rewrite 
		 $\mathrm{J}_{i,j} = \mathrm{J}_{i,j}^{+} - \mathrm{J}_{i,j}^{-}$, where 
		\begin{align}
		\mathrm{J}_{i,j}^{\star} := \lambda_{j}\left(v_{i,j+1/2}^{n\,\star}g_{i,j+1/2}^{\star} - v_{i,j-1/2}^{n\,\star}g_{i,j-1/2}^{\star} \right) - \lambda_{j}\left(v_{i-1,j+1/2}^{n\,\star}g_{i-1,j+1/2}^{\star} - v_{i-1,j-1/2}^{n\,\star}g_{i-1,j-1/2}^{\star} \right) 
		\end{align}
		with  $\star \in \{+,-\}$.		
%		\begin{align}
%		\mathrm{J}_{i,j}^{\star} :={}& \lambda_{j}\left(\left(\mathrm{a}_{i,j+1/2}^{\star} -  \mathrm{a}_{i,j-1/2}^{\star}  \right) -    \left(\mathrm{a}_{i-1,j+1/2}^{\star} -  \mathrm{a}_{i-1,j-1/2}^{\star} \right)\right),
%		\end{align} 
		The numerical fluxes involved in $\mathrm{J}_{i,j}^{\,+}$ and $\mathrm{J}_{i,j}^{\,-}$ can be assigned with spatial locations as in~Figures~\ref{fig:v_flux_p} and~\ref{fig:v_flux_m}. A re--grouping of  $\mathrm{J}_{i,j}^{\star}/\lambda_{j}$ leads to 
		\begin{align}
		\allowdisplaybreaks
		\mathrm{J}_{i,j}^{\star} :={}& \lambda_{j}\left(v_{i,j+1/2}^{n\,\star}g_{i,j+1/2}^{\star} - v_{i-1,j+1/2}^{n\,\star}g_{i-1,j+1/2}^{\star}  \right) - \lambda_{j}\left(v_{i,j-1/2}^{n\,\star}g_{i,j-1/2}^{\star} - v_{i-1,j-1/2}^{n\,\star}g_{i-1,j-1/2}^{\star} \right) \\
		=:{}& \lambda_{j}\left( \mathfrak{T}_1^{\ast} +  \mathfrak{T}_2^{\ast} \right).
		\label{eqn:hor_vert}
		\end{align}
		We consider horizontal difference  $\mathfrak{T}_1^{+} = v_{i,j+1/2}^{n\,+}g(\ats{n}{i,j},\ats{n}{i,j+1}) - v_{i-1,j+1/2}^{n\,+}g(\ats{n}{i-1,j},\ats{n}{i-1,j+1})$ for clarity. Grouping the terms appropriately yields 
		\begin{multline}
		v_{i,j+1/2}^{n\,+}g(\ats{n}{i,j},\ats{n}{i,j+1}) - v_{i-1,j+1/2}^{n\,+}g(\ats{n}{i-1,j},\ats{n}{i-1,j+1}) = \left(v_{i,j+1/2}^{n\,+} - v_{i-1,j+1/2}^{n\,+}\right) g(\ats{n}{i,j},\ats{n}{i,j+1}) \\
		+ v_{i-1,j+1/2}^{n\,+}\left(g(\ats{n}{i,j},\ats{n}{i,j+1}) - g(\ats{n}{i-1,j},\ats{n}{i-1,j+1}) \right) := \mathrm{T}_1^{\,j+1/2} + \mathrm{T}_2^{\,j+1/2}.
		\label{eqn:flux-interm}
		\end{multline} 
%		The term $\mathrm{T}_1^{\,j+1/2}$ in~\eqref{eqn:flux-interm} can be combined with $\mathrm{T}_{1}^{\,j-1/2}$ coming from the $\mathfrak{T}_2^{+}$ of~\eqref{eqn:hor_vert} to obtain a bound using the total variation of $\mathrm{div}(\boldsymbol{u})$. The difference between the numerical fluxes on the edges in $\mathrm{T}_2^{\,j+1/2}$ needs to be carefully analysed. We cannot transform the difference $g(\ats{n}{i,j},\ats{n}{i,j+1}) - g(\ats{n}{i-1,j},\ats{n}{i-1,j+1})$ into a difference quotient as we did in~\eqref{eqn:diff_quo}, since $g(\ats{n}{i,j},\ats{n}{i,j+1})$ and  $g(\ats{n}{i-1,j},\ats{n}{i-1,j+1})$ have different first and second arguments. 
%		As a result, the Lipschitz continuity of $g$ cannot be employed to bound any resulting %difference quotients. To overcome this problem,
Introduce an artificial nodal flux $g(\ats{n}{i-1,j},\ats{n}{i,j+1})$ arising from two diagonally opposite control volumes as in Figure~\ref{fig:nodal_flux}. 
		\begin{figure}[h!]
			\centering
			\includegraphics[scale=1]{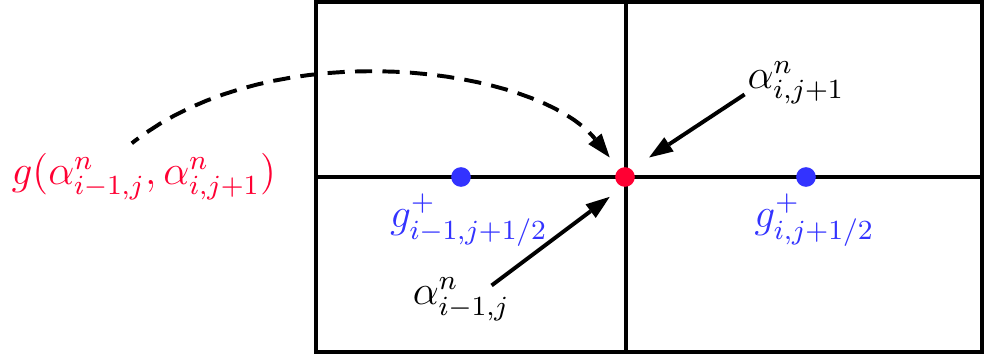}
			\caption{Intermediate nodal flux connecting the fluxes on edges.}
			\label{fig:nodal_flux}
		\end{figure}
		\begin{subequations}
			The nodal flux and some manipulations lead to 
			\begin{align}
			\mathrm{J}_{i,j}^{\star} ={}& \lambda_{j}\left( (v_{i,j+1/2}^{n\,\star} - v_{i-1,j+1/2}^{n\,\star}) g_{i,j+1/2}^{\star} - (v_{i,j-1/2}^{n\,\star} - v_{i-1,j-1/2}^{n\,\star}) g_{i,j-1/2}^{\star} \right)\\
			&+ \lambda_{j}\left[v_{i-1,j+1/2}^{n\,\star} \mathrm{E}_{\star}(\ats{n}{i,j},\ats{n}{i-1,j},\ats{n}{i,j+1}) \left( \ats{n}{i,j} - \ats{n}{i-1,j} \right)\right. \\
			&\hspace{3cm}- v_{i-1,j-1/2}^{n\,\star} \mathrm{E}_{\star}(\ats{n}{i,j-1},\ats{n}{i-1,j-1},\ats{n}{i,j}) \left(\ats{n}{i,j-1} - \ats{n}{i-1,j-1} \right)\\
			&+ v_{i-1,j+1/2}^{n\,\star} \mathrm{E}_{-\star}(\ats{n}{i,j+1},\ats{n}{i-1,j+1},\ats{n}{i-1,j})  \left(\ats{n}{i,j+1} - \ats{n}{i-1,j+1} \right) \\
			&\hspace{3cm}- \left. v_{i-1,j-1/2}^{n\,\star} \mathrm{E}_{-\star}(\ats{n}{i,j},\ats{n}{i-1,j},\ats{n}{i-1,j-1}) \left( \ats{n}{i,j} - \ats{n}{i-1,j} \right) \right],
			\label{eqn:j-split}
			\end{align}
			where the difference quotients $\mathrm{E}_{\ast} : \mathbb{R}^3 \rightarrow \mathbb{R}$ are defined by
			\begin{align}
			\hspace{-1cm}\mathrm{E}_\star(a,b,c) :={}&  \left\{ \begin{array}{c l}
			\dfrac{(1 + \star)(g(a,c) - g(b,c)) + (1 - \star)\left(g(c,a) - g(c,b)\right)}{2(a-b)} & \text{ if } a\not= b,\;\;\textrm{and}\\
			0 & \text{ if } a =  b.
			\end{array}
			\right.
			\label{eqn:E-diff}
			\end{align}
		\end{subequations}
		Note that the sums $(1 \pm (\pm))$ used in~\eqref{eqn:E-diff} are understood as $(1  \pm (\pm1))$.  Use the identity $a^{+} = a + a^{-}$ to transform the differences $(v_{i,j+1/2}^{n\,+} - v_{i-1,j+1/2}^{n\,+})$ and $(v_{i,j-1/2}^{n\,+} - v_{i-1,j-1/2}^{n\,+})$ in $\mathrm{J}_{i,j}^{+}$ and combine the resulting negative parts with the corresponding negative parts in $\mathrm{J}_{i,j}^{-}$. This yields
		\begin{align}
		&\left( \lambda_{j}(v_{i,j+1/2}^{n\,+} - v_{i-1,j+1/2}^{n\,+}) g_{i,j+1/2}^{+} - \lambda_{j}(v_{i,j-1/2}^{n\,+} - v_{i-1,j-1/2}^{n\,+}) g_{i,j-1/2}^{+} \right) \\
		&-\left( \lambda_{j}(v_{i,j+1/2}^{n\,-} - v_{i-1,j+1/2}^{n\,-}) g_{i,j+1/2}^{-} - \lambda_{j}(v_{i,j-1/2}^{n\,-} - v_{i-1,j-1/2}^{n\,-}) g_{i,j-1/2}^{-} \right)  \\
		={}&  \left.\left( \begin{array}{l}\dfrac{g_{i,j+1/2}^{+}}{h_j} \displaystyle\int_{t_n}^{t_{n+1}}\left( \displaystyle\fint_{x_{i-1/2}}^{x_{i+1/2}} v(t,s,y_{j+1/2})\,\mathrm{d}s - \displaystyle\fint_{x_{i-3/2}}^{x_{i-1/2}} v(t,s,y_{j+1/2})\,\mathrm{d}s\right)\,\mathrm{d}t \\
		- \dfrac{g_{i,j-1/2}^{+}}{h_j} \displaystyle\int_{t_n}^{t_{n+1}}\left( \displaystyle\fint_{x_{i-1/2}}^{x_{i+1/2}} v(t,s,y_{j-1/2})\,\mathrm{d}s - \displaystyle\fint_{x_{i-3/2}}^{x_{i-1/2}} v(t,s,y_{j-1/2})\,\mathrm{d}s\right)\,\mathrm{d}t \end{array} \right) \quad \right\} =: \mathrm{K}_{i,j}^g\\
		&+ \lambda_{j}(v_{i,j+1/2}^{n\,-} - v_{i-1,j+1/2}^{n\,-})(g_{i,j+1/2}^{+} - g_{i,j+1/2}^{-}) \\
		&- \lambda_{j}(v_{i,j-1/2}^{n\,-} 
		- v_{i-1,j-1/2}^{n\,-}) (g_{i,j-1/2}^{+} -g_{i,j-1/2}^{-}).
		\label{eqn:vert_interm}
		\end{align}
		\hyperref[ps.3]{\textbf{Step 3:}}
		\begin{subequations}
			Combine~\eqref{eqn:diff_h},~\eqref{eqn:space_var},~\eqref{eqn:j-split},~\eqref{eqn:vert_interm} and re-group the terms to obtain
				\begin{align} 
			\ats{n+1}{i,j} - \ats{n+1}{i-1,j} ={}& \left( \ats{n}{i,j} - \ats{n}{i-1,j} \right) (1 - c_{i,j}) - \mu_{i}\mathrm{M}_{i+1/2,j}^{\,x} \left( \ats{n}{i,j} - \ats{n}{i+1,j} \right) \\
			&- \mu_{i-1}\mathrm{M}_{i-3/2,j}^{\,x} \left( \ats{n}{i-2,j} - \ats{n}{i-1,j} \right) \\
			&+ \lambda_{j} \left[\sum_{\ast \in \{+,-\}} (\ast)v_{i-1,j-1/2}^{n\,\ast} \mathrm{E}_{\ast}(\ats{n}{i,j-1},\ats{n}{i-1,j-1},\ats{n}{i,j}) \left(\ats{n}{i,j-1} - \ats{n}{i-1,j-1} \right) \right.
	\\
	&- \left.\sum_{\ast \in \{+,-\}}(\ast)v_{i-1,j+1/2}^{n\,\ast} \mathrm{E}_{-\ast }(\ats{n}{i,j+1},\ats{n}{i-1,j+1},\ats{n}{i-1,j})  \left(\ats{n}{i,j+1} - \ats{n}{i-1,j+1} \right) \right]\\
			&- \lambda_{j} \left[(v_{i,j+1/2}^{n\,-} - v_{i-1,j+1/2}^{n\,-}) \left( g_{i,j+1/2}^{+} - g_{i,j+1/2}^{-} \right)\right. \\
			&+ \left.(v_{i,j-1/2}^{n\,-} - v_{i-1,j-1/2}^{n\,-}) \left(g_{i,j-1/2}^{+} -g_{i,j-1/2}^{-}\right) \right]- \left( \mathrm{K}_{i,j}^{f} + \mathrm{K}_{i,j}^{g} \right),
			\label{eqn:diff_i:i-1} \vspace{0.4cm} \\
			{}\text{where}\;\;c_{i,j} :={}& \mu_{i} \mathrm{M}_{i-1/2,j}^{\,x} + \mu_{i-1} \mathrm{M}_{i-1/2,j}^{\,x} +  \lambda_{j}\left[v_{i-1,j+1/2}^{n\,+} \mathrm{E}_{+}(\ats{n}{i,j},\ats{n}{i-1,j},\ats{n}{i,j+1}) \right.
			\\
			&- v_{i-1,j-1/2}^{n\,+} \mathrm{E}_{-}(\ats{n}{i,j},\ats{n}{i-1,j},\ats{n}{i-1,j-1}) 
			-  v_{i-1,j+1/2}^{n\,-} \mathrm{E}_{-}(\ats{n}{i,j},\ats{n}{i-1,j},\ats{n}{i,j+1}) 
			\\
			&+ \left. v_{i-1,j-1/2}^{n\,-} \mathrm{E}_{+}(\ats{n}{i,j},\ats{n}{i-1,j},\ats{n}{i-1,j-1})\right].
			\label{eqn:c-ij}
			\end{align}		
		\end{subequations}
		%\begin{align}
		%\mathrm{K}_{i,j}^{f} :={}& \left[ f(\ats{n}{i,j}) \int_{t_n}^{t_{n+1}}\fint_{K_{i,j}}\partial_{x} u(t_n,\boldsymbol{x})\,\mathrm{d}\boldsymbol{x} -  f(\ats{n}{i-1,j}) \int_{t_n}^{t_{n+1}}\fint_{K_{i-1,j}} \partial_{x} u(t_n,\boldsymbol{x})\,\mathrm{d}\boldsymbol{x}\right],\;\;\textrm{and}\\
		%\mathrm{K}_{i,j}^{g} :={}& \dfrac{g_{i,j+1/2}^{+}}{h_{j}} \left[ \int_{t_n}^{t_{n+1}}\fint_{x_{i-1/2}}^{x_{i+1/2}} v(t,s,y_{j+1/2})\,\mathrm{d}s\,\mathrm{d}t - \int_{t_n}^{t_{n+1}}\fint_{x_{i-3/2}}^{x_{i-1/2}} v(t,s,y_{j+1/2})\,\mathrm{d}s\,\mathrm{d}t\right] \\
		%&- \dfrac{g_{i,j-1/2}^{+}}{h_j} \left[ \int_{t_n}^{t_{n+1}}\fint_{x_{i-1/2}}^{x_{i+1/2}} v(t,s,y_{j-1/2})\,\mathrm{d}s\,\mathrm{d}t - \int_{t_n}^{t_{n+1}}\fint_{x_{i-3/2}}^{x_{i+1/2}} v(t,s,y_{j-1/2})\,\mathrm{d}s\,\mathrm{d}t\right].
		%\end{align}
\noindent Note that in~\eqref{eqn:c-ij} the terms $\mathrm{E}_{-}$ are nonpositive and $\mathrm{E}_{+}$ are nonnegative. This fact along with the CFL condition ensures that $1 - c_{i,j}$ is nonnegative. Take absolute value on both sides of~\eqref{eqn:diff_i:i-1}, multiply by $h_j$, sum on $i = 1,\ldots,I$ and $j = 0,\ldots,J$, and use the condition that $\boldsymbol{u}  = \boldsymbol{0}$ on $\partial \Omega$ to change the indices appropriately to obtain 
	\begin{align}
		\allowdisplaybreaks
		\sum_{j=0}^{J} h_{j} \sum_{i=1}^{I} |\ats{n+1}{i,j} - \ats{n+1}{i-1,j}| \le{}& \sum_{j=0}^{J} h_{j} \sum_{i=1}^{I} \left|\ats{n}{i,j} - \ats{n}{i-1,j}\right| (1 - c_{i,j}) \\
		&\hspace{-2cm}+ \sum_{j=0}^{J} h_{j} \left[\sum_{i=1}^{I}  \mu_{i-1}\mathrm{M}_{i-1/2,j}^{\,x} \left| \ats{n}{i,j} - \ats{n}{i-1,j} \right| + \ \sum_{i=1}^{I} \mu_{i}\mathrm{M}_{i-1/2,j}^{\,x} \left|\ats{n}{i,j} - \ats{n}{i-1,j} \right| \right]\\
		&\hspace{-2cm}+ \sum_{\ast \in \{+,-\}}\sum_{j=0}^{J} h_{j} \sum_{i=1}^{I} \left[\lambda_{j}v_{i-1,j+1/2}^{n\,\ast} (\ast)\mathrm{E}_{\ast}(\ats{n}{i,j},\ats{n}{i-1,j},\ats{n}{i,j+1}) \left|\ats{n}{i,j} - \ats{n}{i-1,j} \right| \right.\\
		&\hspace{0.5cm}+ \left. v_{i-1,j-1/2}^{n\,\ast} (-(\ast)\mathrm{E}_{-\ast}(\ats{n}{i,j},\ats{n}{i-1,j},\ats{n}{i-1,j-1}))  \left|\ats{n}{i,j} - \ats{n}{i-1,j} \right| \right]\\
		&\hspace{-2cm}+ \sum_{j=0}^{J-1} h_{j} \sum_{i=1}^{I} \lambda_{j}\left|v_{i,j+1/2}^{n-} - v_{i-1,j+1}^{n-}\right| \left| g(\ats{n}{i,j},\ats{n}{i,j+1}) - g(\ats{n}{i,j+1},\ats{n}{i,j})\right| \\
		&\hspace{-2cm}+ \sum_{j=1}^{J} h_{j} \sum_{i=1}^{I} \lambda_{j}\left|v_{i,j-1/2}^{n-} - v_{i-1,j-1/2}^{n-}\right|\;\left| g(\ats{n}{i,j-1},\ats{n}{i,j}) - g(\ats{n}{i,j},\ats{n}{i,j-1})\right| \\
		&\hspace{-2cm}+ \sum_{j=1}^{J} h_{j} \sum_{i=1}^{I} \left( \left|\mathrm{K}_{i,j}^{f} + \mathrm{K}_{i,j}^{g}\right| \right).
		\label{eqn:diff_i:i-1:abs}
		\end{align}	
		The term $1 - c_{i,j}$ and coefficients of $|\ats{n}{i,j} - \ats{n}{i-1,j}|$ in the second and third sum on the right hand side of~\eqref{eqn:diff_i:i-1:abs} adds up to one, and this yields
		\begin{multline}
		\sum_{j=0}^{J} h_{j} \sum_{i=1}^{I}\left|\ats{n+1}{i,j} - \ats{n+1}{i-1,j}\right| \le{} \sum_{j=0}^{J} h_{j} \sum_{i=1}^{I} \left|\ats{n}{i,j} - \ats{n}{i-1,j} \right| \\
		+ \sum_{j=0}^{J-1} h_{j} \sum_{i=1}^{I} \lambda_{j}\left|v_{i,j+1/2}^{n-} - v_{i-1,j+1/2}^{n-}\right|\;\left| g(\ats{n}{i,j},\ats{n}{i,j+1}) - g(\ats{n}{i,j+1},\ats{n}{i,j})\right| \\
		+ \sum_{j=1}^{J} h_{j} \sum_{i=1}^{I} \lambda_{j}\left|v_{i,j-1/2}^{n-} - v_{i-1,j-1/2}^{n-}\right|\; \left|g(\ats{n}{i,j-1},\ats{n}{i,j}) - g(\ats{n}{i,j},\ats{n}{i,j-1})\right| \\
		+ \delta \sum_{j=1}^{J} h_{j} \sum_{i=1}^{I} \left( \left|\mathrm{K}_{i,j}^{f} + \mathrm{K}_{i,j}^{g}\right| \right).\hspace{3cm}
		\label{eqn:var-alpha}
		\end{multline}
		Use the Lipschitz continuity of the negative part $a \rightarrow a^{-}$ (with constant 1) and $g$, Lipschitz continuity of $v$ in the $x$--direction, and grid regularity condition of Definition~\ref{defn:admis_grid} to obtain 
		\begin{align}
		&\hspace{-2cm}\lambda_{j}\left|v_{i,j-1/2}^{n\,-} - v_{i-1,j-1/2}^{n\,-}\right|\;\left|g(\ats{n}{i,j-1},\ats{n}{i,j}) - g(\ats{n}{i,j},\ats{n}{i,j-1})\right| \le \\
		&\widetilde{c}\,\left|\ats{n}{i,j} - \ats{n}{i,j-1}\right|  \mathrm{Lip}(g) \,\int_{t_n}^{t_{n+1}}\|\partial_x v(t,\cdot)\|_{L^\infty(\Omega)}\,\mathrm{d}t.
		\label{eqn:var-alpha-d_xu}
		\end{align}
		\hyperref[ps.4]{\textbf{Step 4:}} Apply~\ref{appen_id.a} on $\mathrm{K}_{i,j}^{g}$ (see~\eqref{eqn:vert_interm}) to obtain 
		\begin{align}
		\mathrm{K}_{i,j}^{g} ={}&   \dfrac{g_{i,j+1/2}^{+} - g_{i,j-1/2}^{+}}{2h_j}\left[ \int_{t_n}^{t_{n+1}}\left( \fint_{x_{i-1/2}}^{x_{i+1/2}} v(t,s,y_{j+1/2})\,\mathrm{d}s -  \fint_{x_{i-3/2}}^{x_{i-1/2}} v(t,s,y_{j+1/2})\,\mathrm{d}s\right)\,\mathrm{d}t\right. \\
		&+ \left. \int_{t_n}^{t_{n+1}}\left(\fint_{x_{i-1/2}}^{x_{i+1/2}} v(t,s,y_{j-1/2})\,\mathrm{d}s - \fint_{x_{i-3/2}}^{x_{i+1/2}} v(t,s,y_{j-1/2})\,\mathrm{d}s\,\right)\mathrm{d}t\right] \\
		&+ \dfrac{g_{i,j+1/2}^{+} + g_{i,j-1/2}^{+}}{2}\left[ \int_{t_n}^{t_{n+1}}\fint_{K_{i,j}} \partial_y v(t,\cdot)\,\mathrm{d}\boldsymbol{x}\,\mathrm{d}t - \int_{t_n}^{t_{n+1}}\fint_{K_{i-1,j}} \partial_y v(t,\cdot)\,\mathrm{d}\boldsymbol{x}\,\mathrm{d}t\right]  \\
		=:{}& \mathrm{K}_{i,j}^{g,1} + \mathrm{K}_{i,j}^{g,2}.
		\end{align}
		Write the term $\mathrm{K}_{i,j}^f$ (see~\eqref{eqn:kij_defn}) as
		\begin{align}
		\mathrm{K}_{i,j}^{f} ={}& \left(f(\ats{n}{i,j}) \int_{t_n}^{t_{n+1}} \fint_{K_{i,j}} \mathrm{div}(\boldsymbol{u})(t,\cdot)\,\mathrm{d}\boldsymbol{x}\,\mathrm{d}t - f(\ats{n}{i-1,j}) \int_{t_n}^{t_{n+1}} \fint_{K_{i-1,j}} \mathrm{div}(\boldsymbol{u})(t,\cdot)\,\mathrm{d}\boldsymbol{x}\,\mathrm{d}t \right) \\
		&- \left( f(\ats{n}{i,j}) \int_{t_n}^{t_{n+1}} \fint_{K_{i,j}} \partial_y v(t,\cdot)\,\mathrm{d}\boldsymbol{x}\,\mathrm{d}t -  f(\ats{n}{i-1,j}) \int_{t_n}^{t_{n+1}} \fint_{K_{i-1,j}} \partial_y v(t,\cdot)\,\mathrm{d}\boldsymbol{x}\,\mathrm{d}t \right) \\
		=:{}& \mathrm{K}_{i,j}^{f,1} + \mathrm{K}_{i,j}^{f,2}.
		\label{eqn:kif_rewrite}
		\end{align}
		Use the Lipschitz continuity of $g$, Lipschitz continuity of $v$ in the $x$--direction, and  Definition~\ref{defn:admis_grid} to obtain
		\begin{align}
		|\mathrm{K}_{i,j}^{g,1}| &\le \widetilde{c}\,\mathrm{Lip}(g) \left( |\ats{n}{i,j} - \ats{n}{i,j-1}| + |\ats{n}{i,j+1} - \ats{n}{i,j}| \right)\int_{t_n}^{t_{n+1}} ||\partial_x v(t,\cdot)||_{L^\infty(\Omega)}\,\mathrm{d}t.
		\label{eqn:est_kg1}
		\end{align}
		A use of~\ref{appen_id.a} on $\mathrm{K}_{i,j}^{f,1}$ yields
		\begin{align}
		\mathrm{K}_{i,j}^{f,1} ={}& \dfrac{f(\ats{n}{i,j}) - f(\ats{n}{i-1,j})}{2} \left[\int_{t_{n}}^{t_{n+1}} \fint_{K_{i,j}} \mathrm{div}(\boldsymbol{u})(t,\cdot)\,\mathrm{d}\boldsymbol{x}\,\mathrm{d}t + \int_{t_{n}}^{t_{n+1}} \fint_{K_{i-1,j}} \mathrm{div}(\boldsymbol{u})(t,\cdot)\,\mathrm{d}\boldsymbol{x}\,\mathrm{d}t\right]
		\\
	&+ \dfrac{f(\ats{n}{i,j}) + f(\ats{n}{i-1,j})}{2} \left[\int_{t_{n}}^{t_{n+1}} \fint_{K_{i,j}} \mathrm{div}(\boldsymbol{u})(t,\cdot)\,\mathrm{d}\boldsymbol{x}\,\mathrm{d}t - \int_{t_{n}}^{t_{n+1}} \fint_{K_{i-1,j}} \mathrm{div}(\boldsymbol{u})(t,\cdot)\,\mathrm{d}\boldsymbol{x}\,\mathrm{d}t\right],
		\label{eqn:sum-diff}
		\end{align} 
		Therefore, $|\mathrm{K}_{i,j}^{f,1}|$ can be bounded by 
		\begin{align}
		|\mathrm{K}_{i,j}^{f,1}| \le{}& \mathrm{Lip}(f) |\ats{n}{i,j} - \ats{n}{i-1,j}| \int_{t_n}^{t_{n+1}} ||\mathrm{div}(\boldsymbol{u})(t,\cdot)||_{L^\infty(\Omega)}\,\mathrm{d}t \\
		&\hspace{-0.5cm}+ \left( \mathrm{Lip}(f) \alpha_M + f_0 \right) \int_{t_n}^{t_{n+1}} \left|\fint_{K_{i,j}} \mathrm{div}(\boldsymbol{u})(t,\cdot)\,\mathrm{d}\boldsymbol{x} - \fint_{K_{i-1,j}} \mathrm{div}(\boldsymbol{u})(t,\cdot)\,\mathrm{d}\boldsymbol{x} \right|\,\mathrm{d}t.
		\label{eqn:est_kf1}
		\end{align}
		The sum $\mathrm{K}_{i,j}^{g,2} + \mathrm{K}_{i,j}^{f,2}$ can be written as
		\begin{align}
		\mathrm{K}_{i,j}^{g,2} + \mathrm{K}_{i,j}^{f,2} ={}&  \dfrac{-2 f(\ats{n}{i,j}) + g_{i,j+1/2}^{+} + g_{i,j-1/2}^{+} }{2} \int_{t_n}^{t_{n+1}} \fint_{K_{i,j}} \partial_y v(t,\cdot)\,\mathrm{d}\boldsymbol{x}\,\mathrm{d}t \\
		&+ \dfrac{2f(\ats{n}{i-1,j}) - g_{i,j+1/2}^{+} - g_{i,j-1/2}^{+}}{2} \int_{t_n}^{t_{n+1}} \fint_{K_{i-1,j}} \partial_y v(t,\cdot)\,\mathrm{d}\boldsymbol{x}\,\mathrm{d}t. 
		\label{eqn:est_kf2_kg2_i}
		\end{align}
		The Lipschitz continuity of $g$ and $f$ and  $g(a,a) = f(a)$ yield
		\begin{subequations}
			\begin{align}
			\label{eqn:est_kf2_kg2_ii}
			\hspace{-2cm}|-2 f(\ats{n}{i,j}) + g_{i,j+1/2}^{+} + g_{i,j-1/2}^{+}| \le{}& \mathrm{Lip}(g)|\ats{n}{i,j} - \ats{n}{i,j-1}| + \mathrm{Lip}(g)|\ats{n}{i,j} - \ats{n}{i,j+1}|,  \hspace{0.5cm}\\
			|2 f(\ats{n}{i-1,j}) + g_{i,j+1/2}^{+} + g_{i,j-1/2}^{+}| \le{}& 2 \mathrm{Lip}(f)|\ats{n}{i,j} - \ats{n}{i-1,j}| \nonumber \\
			\label{eqn:est_kf2_kg2_iii}
			&+  \mathrm{Lip}(g)|\ats{n}{i,j} - \ats{n}{i,j-1}| + \mathrm{Lip}(g)|\ats{n}{i,j} - \ats{n}{i,j+1}|.
			\end{align} 
		\end{subequations}
		Combine the bounds~\eqref{eqn:est_kg1},~\eqref{eqn:est_kf1},~\eqref{eqn:est_kf2_kg2_i},~~\eqref{eqn:est_kf2_kg2_ii}, and ~\eqref{eqn:est_kf2_kg2_iii} to obtain
		\begin{multline}
		|\mathrm{K}_{i,j}^f + \mathrm{K}_{i,j}^g| \le{} |\mathrm{K}_{i,j}^{f,1}| + |\mathrm{K}_{i,j}^{g,1}| + |\mathrm{K}_{i,j}^{f,2} + \mathrm{K}_{i,j}^{g,2}| \\
		\le{} \mathrm{Lip}(f)|\ats{n}{i,j} - \ats{n}{i-1,j}| \left( \int_{t_n}^{t_{n+1}} ||\mathrm{div}(\boldsymbol{u})(t,\cdot)||_{L^\infty(\Omega)}\,\mathrm{d}t + 2\int_{t_n}^{t_{n+1}} ||\partial_y v(t,\cdot)||_{L^\infty(\Omega)}\,\mathrm{d}t \right) \\
		+ \mathrm{Lip}(g) \left( |\ats{n}{i,j} - \ats{n}{i,j-1}| + |\ats{n}{i,j+1} - \ats{n}{i,j}| \right)\left( \widetilde{c} \int_{t_n}^{t_{n+1}} \left( ||\partial_x v(t,\cdot)||_{L^\infty(\Omega)}+ 2||\partial_y v(t,\cdot)||_{L^\infty(\Omega)} \right)\,\mathrm{d}t \right) \\
		+ \left( \mathrm{Lip}(f) \alpha_M + f_0 \right) \int_{t_n}^{t_{n+1}} \left|\fint_{K_{i,j}} \mathrm{div}(\boldsymbol{u})(t,\cdot)\,\mathrm{d}\boldsymbol{x} - \fint_{K_{i-1,j}} \mathrm{div}(\boldsymbol{u})(t,\cdot)\,\mathrm{d}\boldsymbol{x} \right|\,\mathrm{d}t.
		\label{eqn:kfg}
		\end{multline}
		\hyperref[ps.5]{\textbf{Step 5:}} Use~\eqref{eqn:var-alpha},~\eqref{eqn:var-alpha-d_xu}, and~\eqref{eqn:kfg} to obtain
		\begin{multline}
		|\alpha_{h,\delta}(t_{n+1},\cdot)|_{L^1_yBV_x} \le{} |\alpha_{h,\delta}(t_{n},\cdot)|_{L^1_yBV_x} 
		+  4\left(\widetilde{c} + 1\right)\,\mathrm{Lip}(g) |\alpha_{h,\delta}(t_{n},\cdot)|_{L^1_xBV_y} \int_{t_{n}}^{t_{n+1}}||\nabla \boldsymbol{u}||_{L^\infty(\Omega)}\,\mathrm{d}t  \\
		+ 3\mathrm{Lip}(f) |\alpha_{h,\delta}(t_{n},\cdot)|_{L^1_yBV_x}\int_{t_{n}}^{t_{n+1}}||\nabla \boldsymbol{u}||_{L^\infty(\Omega)}\,\mathrm{d}t \\
		+ \left( \mathrm{Lip}(f) \alpha_M + f_0 \right) \int_{t_{n}}^{t_{n+1}} |\Pi_{h}^0(\mathrm{div}(\boldsymbol{u}))(t,\cdot)|_{L^1_yBV_x}\,\mathrm{d}t,
		\label{eqn:bvx_l1y}
		\end{multline} 
		where the piecewise constant projection $\Pi_{h}^0 : BV_{\boldsymbol{x}}(\Omega) \rightarrow BV_{\boldsymbol{x}}(\Omega)$ for an admissible grid $\mathrm{X}_k \times \mathrm{Y}_h$ is defined by, for $\beta \in BV_{\boldsymbol{x}}(\Omega)$, $\left( \Pi_{h}^0(\beta)\right)(\boldsymbol{x}) := \fint_{K_{i,j}} \beta\,\mathrm{d}\boldsymbol{x}\;\;\forall\,\boldsymbol{x} \in K_{i,j}.$
		\begin{subequations}
			A similar argument can be obtained with $i$ and $j$ interchanged and when combined with~\eqref{eqn:bvx_l1y} yields
			\begin{align}
			|\alpha_{h,\delta}(t_{n+1},\cdot)|_{BV_{x,y}} \le{}& |\alpha_{h,\delta}(t_{n},\cdot)|_{BV_{x,y}} \left(1 + \mathscr{C}  \int_{t_n}^{t_{n+1}}||\nabla\boldsymbol{u}(t,\cdot)||_{L^\infty(\Omega)}\,\mathrm{d}t \right) \\
			&+ \mathscr{C} \int_{t_{n}}^{t_{n+1}}|\Pi_{h}^0(\mathrm{div}(\boldsymbol{u}))|_{BV_{x,y}}\,\mathrm{d}t,
			\label{eqn:alpha_sp_bv}
			\end{align}
			where $\mathscr{C} = \max\left(\mathrm{Lip}(f)\alpha_M + f_0,3\mathrm{Lip}(f) +  4\mathrm{Lip}(g)(\widetilde{c} + 1) + 1  \right)$. Apply induction on~\eqref{eqn:alpha_sp_bv} with $n$ as the index and use the fact that $|\Pi_{h}^0(\mathrm{div}(\boldsymbol{u}))|_{BV_{x,y}} \le |\mathrm{div}(\boldsymbol{u})|_{BV_{x,y}}$ to obtain 
			\begin{align}
			|\alpha_{h,\delta}(t_{n},\cdot)|_{BV_{x,y}} &{}\le  \mathrm{B}_{\boldsymbol{u}} \left( |\alpha_{h,\delta}(t_{0},\cdot)|_{BV_{x,y}} + \mathscr{C}\int_{0}^{T}|\mathrm{div}(\boldsymbol{u})|_{BV_{x,y}}\,\mathrm{d}t  \right).
			\label{eqn:bv_sum}
			\end{align}
			The desired conclusion follows from~\eqref{eqn:bv_sum} and~\eqref{eqn:alpha_ini}.
		\end{subequations}
	\end{proof}
	\begin{figure}[h!]
	\centering
	\begin{subfigure}[b]{0.49\textwidth}
		\centering
		\includegraphics[scale=1]{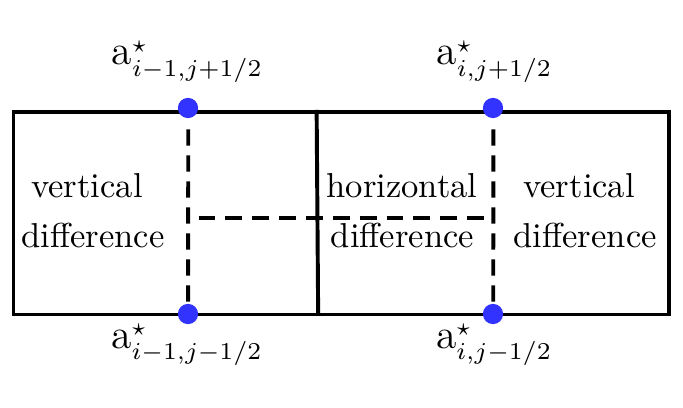}
		\caption{}
		\label{fig:vh_diff}
	\end{subfigure}
	\begin{subfigure}[b]{0.49\textwidth}
		\centering
		\includegraphics[scale=1]{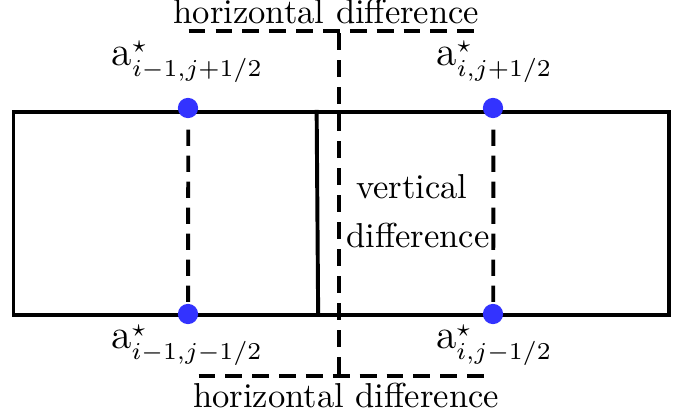}
		\caption{}
		\label{fig:hv_diff}
	\end{subfigure}
	\caption{Differences between horizontal and vertical levels. Here, $a_{i,j+1/2}^{\ast} = v_{i,j+1/2}^{n\,\ast}g_{i,j+1/2}^{\ast}$, where $g_{i,j+1/2}^{+} = g(\alpha_{i,j}^n,\alpha_{i,j+1}^n)$ and  $g_{i,j+1/2}^{-} = g(\alpha_{i,j+1}^n,\alpha_{i,j}^n)$.}
\end{figure}
\begin{remark}[Regrouping of $\mathrm{J}_{i,j}^{\ast}$ in~\eqref{eqn:hor_vert}]
Observe that  $\mathrm{J}_{i,j}^{\star}/\lambda_{j}$ is the horizontal variation between differences across two vertical levels as in Figure~\ref{fig:vh_diff}. However, this form does not yield any terms like $\ats{n}{i,r} - \ats{n}{p,r}$, where $p \in \{i + 1,i-1\}$ and $r \in \{j + 1,j,j-1\}$, and thereby annihilates any chance of expressing  $\ats{n+1}{i,j} - \ats{n+1}{i-1,j}$ as a linear combination of such terms, which is crucial in controlling the growth of spatial variation over time. 
	This problem can be fixed by considering the terms $\mathrm{J}_{i,j}^{+}$ and $\mathrm{J}_{i,j}^{-}$ as vertical variations between differences across two horizontal levels, see~\eqref{eqn:hor_vert}, as in Figure~\ref{fig:hv_diff}.
\end{remark}
	
	\begin{proposition}[temporal variation]
		\label{prop:temp_bv}
		The function $\alpha_{h,\delta}$ satisfies
		\begin{align*}
		|\alpha_{h,\delta}|_{L^1_{x,y}BV_t} \le{}& 4\mathrm{B}_{\boldsymbol{u}} \left( |\alpha_0|_{BV_{x,y}} + \mathscr{C}|\mathrm{div}(\boldsymbol{u})|_{L^1_tBV_{x,y}}  \right) \mathrm{Lip}(g) ||\nabla \boldsymbol{u}||_{L^1_tL^\infty(\Omega_T)} \\
		&+  (\mathrm{Lip}(f)\alpha_M + f_0)|\mathrm{div}(\boldsymbol{u})|_{L^1(\Omega_T)}.
		\end{align*}
	\end{proposition}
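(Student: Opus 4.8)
The plan is to reduce the temporal variation to a time-weighted sum of the spatial variations already controlled by Proposition~\ref{prop:space_bv}, plus a source contribution. Since $\alpha_{h,\delta}$ is piecewise constant in time, for $(x,y)\in K_{i,j}$ one has $|\alpha_{h,\delta}(\cdot,x,y)|_{BV_t(0,T)} = \sum_{n}|\ats{n+1}{i,j}-\ats{n}{i,j}|$, and therefore
\begin{align*}
|\alpha_{h,\delta}|_{L^1_{x,y}BV_t} = \sum_{i,j} k_i h_j \sum_{n}\left|\ats{n+1}{i,j}-\ats{n}{i,j}\right|.
\end{align*}
First I would rewrite $\ats{n+1}{i,j}-\ats{n}{i,j}$ via the flux splittings~\eqref{eqn:flux_x}--\eqref{eqn:flux_y} (equivalently, by reading off~\eqref{eqn:conv_comb}) as the sum of the four terms $\mu_i \mathrm{M}_{i\pm 1/2,j}^{\,x}(\ats{n}{i\pm1,j}-\ats{n}{i,j})$ and $\lambda_j \mathrm{M}_{i,j\pm1/2}^{\,y}(\ats{n}{i,j\pm1}-\ats{n}{i,j})$, minus the source term $f(\ats{n}{i,j})\int_{t_n}^{t_{n+1}}\fint_{K_{i,j}}\mathrm{div}(\boldsymbol u)\,\vx\,\mathrm{d}t$. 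Taking absolute values and using $|\mathrm{M}_{i\pm1/2,j}^{\,x}|\le\mathrm{Lip}(g)\,|u_{i\pm1/2,j}^n|$, $|\mathrm{M}_{i,j\pm1/2}^{\,y}|\le\mathrm{Lip}(g)\,|v_{i,j\pm1/2}^n|$ (as in the proof of Proposition~\ref{prop:boundedness}), together with $|f(\ats{n}{i,j})|\le\mathrm{Lip}(f)\alpha_M+f_0$ where $\alpha_M$ is the uniform $L^\infty$ bound of Proposition~\ref{prop:boundedness}, reduces the estimate to two kinds of sums.

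The source sum is the easy one: after multiplying by $k_i h_j$ and summing over $i,j$ and then over $n$, the terms $k_i h_j\,|f(\ats{n}{i,j})|\int_{t_n}^{t_{n+1}}\fint_{K_{i,j}}|\mathrm{div}(\boldsymbol u)|\,\vx\,\mathrm{d}t$ add up to at most $(\mathrm{Lip}(f)\alpha_M+f_0)\int_0^T\!\!\int_\Omega|\mathrm{div}(\boldsymbol u)|\,\vx\,\mathrm{d}t = (\mathrm{Lip}(f)\alpha_M+f_0)\,|\mathrm{div}(\boldsymbol u)|_{L^1(\Omega_T)}$, which is exactly the second term of the claim.

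The main obstacle is the remaining sum, of which $\sum_n\sum_{i,j}k_i h_j\,\mu_i\mathrm{M}_{i+1/2,j}^{\,x}\,|\ats{n}{i+1,j}-\ats{n}{i,j}|$ is representative: each edge velocity $u_{i+1/2,j}^n$ appears alone inside its coefficient, so a crude bound would only produce $\|\boldsymbol u\|_{L^\infty}$ rather than the desired $\|\nabla\boldsymbol u\|_{L^\infty}$. To bring in the gradient I would use the no-slip hypothesis $\boldsymbol u=\boldsymbol 0$ on $\partial\Omega$: writing $u(t,x_{i+1/2},s)=\int_a^{x_{i+1/2}}\partial_x u(t,\xi,s)\,\mathrm{d}\xi$ (and similarly for the $v$-coefficients, telescoping from the boundary face on which the relevant component vanishes) gives $\delta\,|u_{i+1/2,j}^n|\le(b-a)\int_{t_n}^{t_{n+1}}\|\nabla\boldsymbol u(t,\cdot)\|_{L^\infty(\Omega)}\,\mathrm{d}t$. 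Using $k_i\mu_i=h_j\lambda_j=\delta$ to absorb the area weight, then summing over the index transverse to the difference to form a one-dimensional total variation and over the complementary index to form $|\cdot|_{BV_{x,y}}$, each of the four sums is bounded by a constant (depending only on $\Omega$) times $\mathrm{Lip}(g)\left(\int_{t_n}^{t_{n+1}}\|\nabla\boldsymbol u(t,\cdot)\|_{L^\infty(\Omega)}\,\mathrm{d}t\right)|\alpha_{h,\delta}(t_n,\cdot)|_{BV_{x,y}}$; the four neighbours produce the prefactor $4$.

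To close, I would insert the uniform spatial estimate of Proposition~\ref{prop:space_bv}, namely $|\alpha_{h,\delta}(t_n,\cdot)|_{BV_{x,y}}\le\mathrm{B}_{\boldsymbol u}\left(|\alpha_0|_{BV_{x,y}}+\mathscr{C}\,|\mathrm{div}(\boldsymbol u)|_{L^1_tBV_{x,y}}\right)$ (uniform in $n$), pull it out of the sum over $n$, and use $\sum_n\int_{t_n}^{t_{n+1}}\|\nabla\boldsymbol u(t,\cdot)\|_{L^\infty(\Omega)}\,\mathrm{d}t=\|\nabla\boldsymbol u\|_{L^1_tL^\infty(\Omega_T)}$ to obtain the first term; adding the source term from the previous paragraph completes the proof. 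The only delicate point is the bookkeeping: telescoping each of the four edge-velocity factors from the correct boundary face so that $\boldsymbol u=\boldsymbol 0$ there may be invoked, and sorting the four resulting one-dimensional variations between $|\cdot|_{L^1_yBV_x}$ and $|\cdot|_{L^1_xBV_y}$. The CFL condition is used only indirectly, through Propositions~\ref{prop:boundedness} and~\ref{prop:space_bv}.
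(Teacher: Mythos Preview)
Your proposal is correct and follows the same route as the paper: express $\ats{n+1}{i,j}-\ats{n}{i,j}$ via~\eqref{eqn:conv_comb} as four $\mathrm{M}$-weighted neighbour differences plus the divergence source, multiply by $k_ih_j$, sum, bound $|\mathrm{M}|\le\mathrm{Lip}(g)\times(\text{edge velocity})$, recognise the resulting sums as $|\alpha_{h,\delta}(t_n,\cdot)|_{BV_{x,y}}$, and finish with Proposition~\ref{prop:space_bv}. The paper's proof is exactly this, compressed into three displayed equations.

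The one place you go further than the paper is the passage from $\|\boldsymbol u\|_{L^\infty}$ to $\|\nabla\boldsymbol u\|_{L^\infty}$. The paper simply writes ``use the Lipschitz continuity of $f$ and $g$'' and then records $\|\nabla\boldsymbol u\|$ in~\eqref{eqn:temp-var:3} without further comment; your Poincar\'e-type telescoping from the boundary (where $\boldsymbol u=\boldsymbol 0$) is the natural justification, and it does introduce a factor depending on the side lengths of $\Omega$ that the paper's stated constant does not display. This is a cosmetic discrepancy in the constant rather than a difference of method---for the compactness application only a uniform bound matters, as Remark~\ref{rem:constants} notes---but your version is the more honest accounting.
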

	\noindent The proof of Proposition~\ref{prop:temp_bv} is obtained by writing $\ats{n+1}{i,j} - \ats{n}{i,j}$ in terms of the differences $\ats{n}{i,j} - \ats{n}{l,m}$, where $(l,m) \in \left\{(i,j-1),(i,j+1),(i-1,j),(i+1,j) \right\}$ and by applying Proposition~\ref{prop:space_bv}. 
	\begin{proof}
		Use~\eqref{eqn:conv_comb} to write
		\begin{align}
		\ats{n+1}{i,j} -  \ats{n}{i,j} ={}& \mu_{i} \mathrm{M}_{i+1/2,j}^{\,x} \left(\ats{n}{i+1,j} - \ats{n}{i,j} \right) + \lambda_{j} \mathrm{M}_{i,j+1/2}^{\,y} \left(\ats{n}{i,j+1}-\ats{n}{i,j}\right) \\
		&+ \mu_{i} \mathrm{M}_{i-1/2,j}^{\,x} \left(\ats{n}{i-1,j}-\ats{n}{i,j}\right) + \lambda_{j} \mathrm{M}_{i,j-1/2}^{\,y}
		\left(\ats{n}{i,j-1}  - \ats{n}{i,j}\right) \\
		&- f(\ats{n}{i,j}) \left(\int_{t_{n}}^{t_{n+1}}\fint_{K_{i,j}} \mathrm{div}(\boldsymbol{u})(t,\boldsymbol{x})\,\mathrm{d}\boldsymbol{x}\,\mathrm{d}t \right).
		\label{eqn:temp-var:1}
		\end{align}
	 Multiply both sides of~\eqref{eqn:temp-var:1} by $h_j k_i$, sum over $n = 0,\ldots,N$, $i=0,\ldots,I$ and $j = 0,\ldots,J$, and use the homogeneous boundary condition on $\boldsymbol{u}$ to obtain 
	 \begin{small}
		\begin{multline}
		\sum_{j = 0}^{J}\sum_{i=0}^{I} h_j k_i \sum_{n=0}^{N} |\ats{n+1}{i,j} -  \ats{n}{i,j}| \le \sum_{n=0}^{N} \delta  \sum_{j = 0}^{J}h_j\sum_{i=0}^{I-1} \mathrm{M}_{i+1/2,j}^{\,x} |\ats{n}{i+1,j} - \ats{n}{i,j}| \\
		+ \sum_{n=0}^{N} \delta  \sum_{i = 0}^{I}k_i\sum_{j=0}^{J-1} \mathrm{M}_{i,j+1/2}^{\,y} |\ats{n}{i,j+1} - \ats{n}{i,j}| + \sum_{n=0}^{N} \delta  \sum_{j = 0}^{J}h_j\sum_{i=1}^{I} \mathrm{M}_{i-1/2,j}^{\,x} |\ats{n}{i-1,j} - \ats{n}{i,j}| \\
		+ \sum_{n=0}^{N} \delta  \sum_{i = 0}^{I}k_i\sum_{j=1}^{J} \mathrm{M}_{i,j-1/2}^{\,y} |\ats{n}{i,j-1} - \ats{n}{i,j}|
		+ \sum_{j = 0}^{J}\sum_{i=0}^{I}  \sum_{n=0}^{N} f(\ats{n}{i,j}) \left(\int_{t_{n}}^{t_{n+1}}\int_{K_{i,j}} \mathrm{div}(\boldsymbol{u})(t,\boldsymbol{x})\,\mathrm{d}\boldsymbol{x}\,\mathrm{d}t \right).
		\label{eqn:temp-var:2}
		\end{multline}
	\end{small}
		Use the Lipschitz continuity of the functions $f$ and $g$ and~\eqref{eqn:temp-var:2} to obtain
		\begin{align}
		\int_{\Omega} |\alpha_{h,\delta}(\cdot,x,y)|_{BV_t(0,T)}\,\mathrm{d}x\,\mathrm{d}y \le{}& 4\mathrm{Lip}(g)\int_{0}^T ||\nabla\boldsymbol{u}(t,\cdot)||_{L^\infty(\Omega)}|\alpha_{h,\delta}(t,\cdot)|_{BV_{x,y}} \,\mathrm{d}t \\
		&+ (\mathrm{Lip}(f)\alpha_M + f_0)||\mathrm{div}(\boldsymbol{u})||_{L^1(\Omega_T)}.
		\label{eqn:temp-var:3}
		\end{align}
		Use~\eqref{eqn:temp-var:3} and Proposition~\ref{prop:space_bv} to arrive at the desired result.
	\end{proof}

The result~\eqref{eqn:alpha-bv} in Theorem~\ref{thm:bv-alpha} follows from  Proposition~\ref{prop:space_bv},  Proposition~\ref{prop:temp_bv} and~\eqref{eqn:ts-bv-norm}. The homogeneous source term in~\eqref{eqn:cons_law} can be replaced with a function $\mathfrak{S}(t,\boldsymbol{x},\alpha)$ that satisfies the assumption:
	\begin{enumerate}[label= $\mathrm{(AS.4)}$,ref=$\mathrm{(AS.4)}$,leftmargin=\widthof{(AS.4)}+3\labelsep]
		\item\label{sa} $\mathfrak{S} \in L^1_tL^\infty(\Omega_T)$ and $\mathfrak{S}(t,\boldsymbol{x},z)$ is Lipschitz continuous with respect to $z$ (with constant $\mathrm{Lip}_z(\mathfrak{S})$), uniformly with respect to $t$ and $\boldsymbol{x}$, and is Lipschitz continuous with respect to $\boldsymbol{x}$ (with constant $\mathrm{Lip}_{\boldsymbol{x}}(\mathfrak{S})$), uniformly with respect to $t$ and $z$.
	\end{enumerate} 
	In this case, we obtain the following corollary to Theorem~\ref{thm:bv-alpha}.
	\begin{corollary}
		\label{cor:source}
		Let~\ref{as.1}--~\ref{sa} and the Courant--Friedrichs--Lewy (CFL) condition $4 \delta \max_{i,j}( \frac{1}{k_{i}} + \frac{1}{h_j} ) \mathrm{Lip}(g) ||\boldsymbol{u}||_{L^\infty(\Omega_T)} \le 1$	hold. If $\alpha_0 \in L^\infty(\Omega)\cap BV_{\boldsymbol{x}}(\Omega)$ hen, the time--reconstruct $\alpha_{h,\delta} : \Omega_T \rightarrow \mathbb{R}$ reconstructed from the values $\alpha_{i,j}^n$ obtained from the scheme 
		\begin{align}
		\ats{n+1}{i,j} = \ats{n}{i,j} - \mu_{i}(\mathrm{F}_{i+1/2,j} - \mathrm{F}_{i-1/2,j}) - \lambda_{j} (\mathrm{G}_{i,j+1/2} - \mathrm{G}_{i,j-1/2}) + \int_{t_n}^{t_{n+1}} \fint_{K_{i,j}} \mathfrak{S}(t,\boldsymbol{x},\ats{n}{i,j})\,\mathrm{d}t\,\mathrm{d}\boldsymbol{x}
		\end{align}
		satisfies $|\alpha_{h,\delta}|_{BV_{x,y,t}} \le \mathscr{C}_{\mathrm{BV}}$,
		where $\mathscr{C}_{\mathrm{BV}}$ depends on $T$, $\alpha_0$, $f$, $g$, $||\nabla\boldsymbol{u}||_{L^1_tL^\infty(\Omega_T)}$,  $|\mathrm{div}(\boldsymbol{u})|_{L^1_tBV_{x,y}}$, $\mathrm{Lip}_{\boldsymbol{x}}(\mathfrak{S})$, $\mathrm{Lip}_{z}(\mathfrak{S})$, and $|\mathfrak{S}|_{L^1_tBV_{x,y}}$.
	\end{corollary}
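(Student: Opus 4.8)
The plan is to retrace the three-step proof of Theorem~\ref{thm:bv-alpha} — boundedness (Proposition~\ref{prop:boundedness}), the spatial $BV$ estimate (Proposition~\ref{prop:space_bv}), and the temporal $BV$ estimate (Proposition~\ref{prop:temp_bv}) — carrying along, at each step, the additional contribution of the quadrature term $\mathrm{S}_{i,j}^n := \int_{t_n}^{t_{n+1}}\fint_{K_{i,j}}\mathfrak{S}(t,\boldsymbol{x},\ats{n}{i,j})\,\mathrm{d}t\,\mathrm{d}\boldsymbol{x}$. The structural remark that makes this essentially routine is that $\mathrm{S}_{i,j}^n$ enters the scheme additively and, exactly like the term $-f(\ats{n}{i,j})\int_{t_n}^{t_{n+1}}\fint_{K_{i,j}}\mathrm{div}(\boldsymbol{u})$ already present in~\eqref{eqn:conv_comb}, it leaves the coefficient of $\ats{n}{i,j}$ in the convex combination unchanged; hence the CFL condition (identical to that of Theorem~\ref{thm:bv-alpha}) and all the convex-combination arguments of Section~\ref{sec:b_var} carry over verbatim, and it only remains to estimate differences of $\mathrm{S}_{i,j}^n$.

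First I would handle boundedness. Using~\ref{sa}, bound $|\mathfrak{S}(t,\boldsymbol{x},\ats{n}{i,j})| \le |\mathfrak{S}(t,\boldsymbol{x},0)| + \mathrm{Lip}_z(\mathfrak{S})\,|\ats{n}{i,j}|$; inserting this into~\eqref{eqn:alpha-sup} replaces $\mathrm{Lip}(f)\,\|\mathrm{div}(\boldsymbol{u})(t,\cdot)\|_{L^\infty(\Omega)}$ by $\mathrm{Lip}(f)\,\|\mathrm{div}(\boldsymbol{u})(t,\cdot)\|_{L^\infty(\Omega)} + \mathrm{Lip}_z(\mathfrak{S})$ in the discrete Grönwall factor and augments the forcing by $\|\mathfrak{S}(t,\cdot,0)\|_{L^\infty(\Omega)}$, so induction on $n$ still yields a uniform $L^\infty$ bound $\alpha_M$ on $\alpha_{h,\delta}$, now depending also on $\mathrm{Lip}_z(\mathfrak{S})$ and $\|\mathfrak{S}(\cdot,\cdot,0)\|_{L^1_tL^\infty(\Omega_T)}$.

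Next, for the spatial estimate, in Step~1 of Proposition~\ref{prop:space_bv} the difference $\ats{n+1}{i,j} - \ats{n+1}{i-1,j}$ acquires the extra contribution $\mathrm{S}_{i,j}^n - \mathrm{S}_{i-1,j}^n$, which I split by inserting $\pm\int_{t_n}^{t_{n+1}}\fint_{K_{i-1,j}}\mathfrak{S}(t,\boldsymbol{x},\ats{n}{i,j})\,\mathrm{d}t\,\mathrm{d}\boldsymbol{x}$ into two pieces: one in which only the cell changes, namely the difference of the averages of $\boldsymbol{x}\mapsto\mathfrak{S}(t,\boldsymbol{x},\ats{n}{i,j})$ over the neighbouring cells $K_{i,j}$ and $K_{i-1,j}$, and one in which only the evaluation point changes, bounded by $\mathrm{Lip}_z(\mathfrak{S})\,\delta\,|\ats{n}{i,j} - \ats{n}{i-1,j}|$. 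The latter merely adds $\mathrm{Lip}_z(\mathfrak{S})\delta$ to the coefficient of the horizontal difference $|\ats{n}{i,j} - \ats{n}{i-1,j}|$ and is harmless for the induction; the former, after multiplying by $h_j$ and summing over $i$ and $j$, is controlled — thanks to the $\boldsymbol{x}$-Lipschitz continuity in~\ref{sa}, uniform in $z$ and $t$, together with the grid regularity of Definition~\ref{defn:admis_grid} — by a forcing term bounded in terms of $\mathrm{Lip}_{\boldsymbol{x}}(\mathfrak{S})$ and $|\mathfrak{S}|_{L^1_tBV_{x,y}}$. Interchanging $i$ and $j$ and combining with~\eqref{eqn:bvx_l1y} produces a recursion of the same shape as~\eqref{eqn:alpha_sp_bv}, with $\mathscr{C}$ enlarged so as to also involve $\mathrm{Lip}_z(\mathfrak{S})$ and $\mathrm{Lip}_{\boldsymbol{x}}(\mathfrak{S})$ and with the forcing $|\mathrm{div}(\boldsymbol{u})|_{BV_{x,y}}$ augmented by $|\mathfrak{S}|_{BV_{x,y}}$; induction on $n$ then gives the spatial $BV$ bound.

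For the temporal estimate, $\mathrm{S}_{i,j}^n$ appears directly in $\ats{n+1}{i,j} - \ats{n}{i,j}$; multiplying by $h_j k_i$ and summing over $n$, $i$, $j$ as in Proposition~\ref{prop:temp_bv}, this contribution is at most $\int_0^T\int_\Omega |\mathfrak{S}(t,\boldsymbol{x},\alpha_{h,\delta})|\,\mathrm{d}\boldsymbol{x}\,\mathrm{d}t \le \|\mathfrak{S}(\cdot,\cdot,0)\|_{L^1(\Omega_T)} + \mathrm{Lip}_z(\mathfrak{S})\,\alpha_M\,|\Omega|\,T$ by~\ref{sa} and the $L^\infty$ bound of the first step, and combining with the new spatial bound exactly as in the proof of Proposition~\ref{prop:temp_bv} gives the temporal $BV$ bound; adding the two and recalling~\eqref{eqn:ts-bv-norm} yields $|\alpha_{h,\delta}|_{BV_{x,y,t}} \le \mathscr{C}_{\mathrm{BV}}$ with the dependencies stated. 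The one point that requires genuine care is the piece of $\mathrm{S}_{i,j}^n - \mathrm{S}_{i-1,j}^n$ in which only the cell changes: one must verify that the $\boldsymbol{x}$-regularity of $\mathfrak{S}$ descends to a discretisation-independent $L^1_tBV_{x,y}$ bound for the field $\boldsymbol{x}\mapsto\mathfrak{S}(t,\boldsymbol{x},\alpha_{h,\delta}(t,\boldsymbol{x}))$ even though its last argument is only piecewise constant, which is precisely the role of the uniform-in-$z$ Lipschitz hypothesis in~\ref{sa}.
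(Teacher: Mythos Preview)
Your proposal is correct and follows essentially the same approach as the paper. The paper's proof only spells out the spatial-variation step and then says the rest follows from~\eqref{eqn:bvx_l1y} onward; your write-up is simply more explicit in also treating boundedness and the temporal estimate. The key splitting is identical: the paper adds and subtracts $\int_{t_n}^{t_{n+1}}\fint_{K_{i,j}}\mathfrak{S}(t,\boldsymbol{x},\ats{n}{i-1,j})$ to separate a $z$-Lipschitz piece from a cell-difference piece, while you add and subtract $\int_{t_n}^{t_{n+1}}\fint_{K_{i-1,j}}\mathfrak{S}(t,\boldsymbol{x},\ats{n}{i,j})$, which is a cosmetic variant yielding the same two contributions.
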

	\begin{proof}
		It is enough to estimate variation of the source term in the $x$ direction, which can be written as
		\begin{align}
		\mathrm{V}_{i,j} :=  \int_{t_n}^{t_{n+1}} \fint_{K_{i,j}} \mathfrak{S}(t,\boldsymbol{x},\ats{n}{i,j})\,\mathrm{d}t\,\mathrm{d}\boldsymbol{x} -  \int_{t_n}^{t_{n+1}} \fint_{K_{i-1,j}} \mathfrak{S}(t,\boldsymbol{x},\ats{n}{i-1,j})\,\mathrm{d}t\,\mathrm{d}\boldsymbol{x}.
		\label{eqn:vs1}
		\end{align}
		Add and subtract $ \int_{t_n}^{t_{n+1}} \fint_{K_{i,j}} \mathfrak{S}(t,\boldsymbol{x},\ats{n}{i-1,j})\,\mathrm{d}t\,\mathrm{d}\boldsymbol{x}$ to~\eqref{eqn:vs1} and group the terms appropriately to obtain 
		\begin{align}
		|\mathrm{V}_{i,j}| \le{}& \int_{t_n}^{t_{n+1}} \fint_{K_{i,j}} \left| \mathfrak{S}(t,\boldsymbol{x},\ats{n}{i,j}) - \mathfrak{S}(t,\boldsymbol{x},\ats{n}{i-1,j}) \right| \,\mathrm{d}t\,\mathrm{d}\boldsymbol{x} \\
		&\hspace{-1cm}+ \int_{t_n}^{t_{n+1}} \left| \fint_{K_{i,j}} \mathfrak{S}(t,\boldsymbol{x},\ats{n}{i-1,j})\,\mathrm{d}\boldsymbol{x} - \fint_{K_{i-1,j}} \mathfrak{S}(t,\boldsymbol{x},\ats{n}{i-1,j})\,\mathrm{d}\boldsymbol{x} \right|\,\mathrm{d}t =: \mathrm{V}_1 + \mathrm{V}_2.
		\label{eqn:vij}
		\end{align}
		Use the Lipschitz continuity of $\mathfrak{S}$ with respect to the third argument to bound  $\mathrm{V}_1$ by $\delta\, \mathrm{Lip}_z(\mathfrak{S})|\ats{n}{i,j} - \ats{n}{i-1,j}|$. Sum~\eqref{eqn:vij} for $i = 1,\ldots,I$ to obtain
		\begin{align}
		\sum_{i=1}^I |V_{i,j}| \le \delta\, \mathrm{Lip}_z(\mathfrak{S}) |\alpha_{h,\delta}(t_{n},\cdot)|_{BV_x} + \mathrm{Lip}_{\boldsymbol{x}}(\mathfrak{S})\int_{t_{n}}^{t_{n+1}} \left|\Pi_h^0(\mathfrak{S}) \right|_{BV_{x}}\,\mathrm{d}t.
		\label{eqn:v_addn}
		\end{align}
		Rest of the proof follows by adding the terms in the right hand side of~\eqref{eqn:v_addn} to the right hand side of \eqref{eqn:bvx_l1y} and by following the steps from there on.
	\end{proof}

	\section{$BV$ estimate for conservation laws with fully nonlinear flux}
	\label{subsec:claire}
	Theorem~\ref{thm:bv-alpha} can be extended to the case with fully  nonlinear flux such as
	\begin{align}
	\label{eqn:fully_non}
	\left. 
	\begin{array}{r l}
	\partial_t \alpha + \mathrm{div}(\boldsymbol{F}(t,\boldsymbol{x},\alpha)) &={} 0\;\;\mathrm{ in }\; \Omega_T\;\;\textmd{and} \\
	\alpha(0,\cdot) &= \alpha_0\;\;\mathrm{ in }\;\; \Omega. 
	\end{array}
	\right\}
	\end{align}
	The strong $BV$ estimate on finite volume schemes for~\eqref{eqn:fully_non} on square Cartesian grids is obtained by C. Chainais-Hilairet~\cite{claire_1999} under the assumption that $\mathrm{div}_{\boldsymbol{x}}(\boldsymbol{F}) = 0$. In this article, we relax this condition and obtain bounded variation estimates for $\alpha$ under the following assumptions.
	\begin{enumerate}[label= $\mathrm{(AS.\arabic*)}$,ref=$\mathrm{(AS.\arabic*)}$,leftmargin=\widthof{(AS.4)}+3\labelsep]
		\setcounter{enumi}{4}
		\item\label{a.1} $\boldsymbol{F}(t,\boldsymbol{x},z)$ is $\mathscr{C}^1(\Omega_T \times \mathbb{R})$ and is Lipschitz continuous with respect to $z$ (with constant $\mathrm{Lip}(\boldsymbol{F})$), uniformly with respect to $(t,\boldsymbol{x})$, and $\partial_z \boldsymbol{F}$ is Lipschitz continuous with respect to $\boldsymbol{x}$ (with constant $\mathrm{Lip}(\partial_s \boldsymbol{F})$), uniformly with respect to $t$ and $z$,
		%	\item\label{a.2} for every compact set $K \subset \mathbb{R}$ there exists a constant $\mathscr{C}_K > 0$ such that $|\partial_s F(t,\boldsymbol{x},s)| \le \mathscr{C}_K$ for every $(t,\boldsymbol{x},s) \in  \Omega_T \times K$,
		\item\label{a.3} $|\mathrm{div}_{\boldsymbol{x}}(\boldsymbol{F})|_{L^1_tBV_{x,y}}< \infty$ and $\mathrm{div}_{\boldsymbol{x}}(\boldsymbol{F})$ is Lipschitz continuous with respect to $z$ (with constant constant $\mathrm{Lip}(\mathrm{div}_{\boldsymbol{x}}(\boldsymbol{F}))$), uniformly with respect to $t$ and $\boldsymbol{x}$.
	\end{enumerate}
Observe that assumption $\mathrm{div}_{\boldsymbol{x}}(\boldsymbol{F}) = 0$ manifests as $\mathrm{div}(\boldsymbol{u}) = 0$ in~\eqref{eqn:cons_law}, where $\boldsymbol{F}(t,\boldsymbol{x},\alpha)$ is same as $\boldsymbol{u}(t,\boldsymbol{x})f(\alpha)$. 
	Use~\ref{a.1} to write the flux $\boldsymbol{F}$ as $\boldsymbol{F} := (F_1,F_2)$, $F_1 = a + b$, and $F_2 = c + d$, where $a$ and $c$ are monotonically nondecreasing and $b$ and $d$ are monotonically nonincreasing in $z$, uniformly with respect to $t$ and $\boldsymbol{x}$. In this case, we can set the following finite volume scheme on an admissible grid $\mathrm{X}_h \times \mathrm{Y}_k$: 
	\begin{align}
	\alpha_{i,j}^{n+1} ={} \alpha_{i,j}^n -& \dfrac{1}{k_i} \left(a_{i+1/2,j}^n(\alpha_{i,j}^n) - a_{i-1/2,j}^n(\alpha_{i-1,j}^n) + b_{i+1/2,j}^n(\alpha_{i+1,j}^n) - b_{i-1/2,j}^n(\alpha_{i,j}^n)\right) \\
	-& \dfrac{1}{h_j} \left(c_{i,j+1/2}^n(\alpha_{i,j}^n) - c_{i,j-1/2}^n(\alpha_{i,j-1}^n) + d_{i,j+1/2}^n(\alpha_{i,j+1}^n) - d_{i,j-1/2}^n(\alpha_{i,j}^n)\right)
	\label{eqn:scheme_2}
	\end{align}
	with the initial condition~\ref{eqn:alpha_ini}, where the numerical fluxes are defined, for $\gamma \in \{a,b\}$, and $\varrho \in \{c,d\}$, by
	\begin{align}
	\gamma^n_{i+1/2,j}(s) ={}& \int_{t_n}^{t_{n+1}} \fint_{y_{j-1/2}}^{y_{j+1/2}} \gamma(t,x_{i+1/2},y,s)\mathrm{d}y\,\mathrm{d}t \;\;\text{ and } \\
	\varrho^n_{i,j+1/2}(s) ={}& \int_{t_n}^{t_{n+1}} \fint_{x_{i-1/2}}^{x_{i+1/2}} \varrho(t,x,y_{j+1/2},s)\mathrm{d}x\,\mathrm{d}t.
	\end{align}
	\begin{subequations}
		\begin{theorem}[bounded variation for fully nonlinear flux]
			\label{thm:non-lin:flux}
			Let the assumptions~\ref{a.1}--\ref{a.3} and the following CFL condition hold:
			$4 \delta\,\mathrm{Lip}(\boldsymbol{F}) \max_{i,j}(\frac{1}{k_i} + \frac{1}{h_j} ) \le 1.$
			Then the piecewise time--reconstruct $\alpha_{h,\delta} : \Omega_T \rightarrow \mathbb{R}$ re--constructed from the values $\alpha_{i,j}^n$ obtained from the scheme~\eqref{eqn:scheme_2} satisfies 
			$|\alpha_{h,\delta}|_{BV_{x,y,t}(\Omega_T)} \le \mathscr{C},$
			where $\mathscr{C}$ depends on $T$, $\alpha_0$, $|\mathrm{div}_{\boldsymbol{x}}(\boldsymbol{F})|_{L^1_tBV_{x,y}}$, and $\mathrm{Lip}(\mathrm{div}_{\boldsymbol{x}}(\boldsymbol{F}))$.
		\end{theorem}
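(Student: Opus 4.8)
The plan is to reproduce the three-step architecture of the proof of Theorem~\ref{thm:bv-alpha} in Section~\ref{sec:b_var} — an $L^\infty$ bound, a one-step spatial $BV$ estimate, and a temporal $BV$ estimate — and to close by induction on $n$. The new structural ingredient is the splitting $\boldsymbol F=(F_1,F_2)$, $F_1=a+b$, $F_2=c+d$, with $a,c$ nondecreasing and $b,d$ nonincreasing in $z$. For each of the four pieces I would introduce edge difference quotients analogous to $\mathrm{D}_{i,j}^n$ in~\eqref{eqn:diff_quo}, for instance $\mathrm{D}^{a,n}_{i-1/2,j}(p,q):=\bigl(a^n_{i-1/2,j}(p)-a^n_{i-1/2,j}(q)\bigr)/(p-q)$ for $p\neq q$ and $0$ otherwise, which are nonnegative (resp.\ nonpositive for $b,d$) by monotonicity and bounded by $\mathrm{Lip}(\boldsymbol F)$ thanks to~\ref{a.1}. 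Writing each flux difference in~\eqref{eqn:scheme_2} as the sum of a ``frozen-argument'' spatial difference $a^n_{i+1/2,j}(\alpha^n_{i,j})-a^n_{i-1/2,j}(\alpha^n_{i,j})$ (and similarly for $b,c,d$) and a term $\mathrm{D}^{\cdot,n}\,(\alpha^n_{i,j}-\alpha^n_{l,m})$, the scheme becomes a convex combination of $\{\alpha^n_{l,m}\}_{(l,m)\in\{(i,j),(i\pm1,j),(i,j\pm1)\}}$ — the CFL hypothesis $4\delta\,\mathrm{Lip}(\boldsymbol F)\max_{i,j}(\tfrac1{k_i}+\tfrac1{h_j})\le1$ guarantees the coefficient of $\alpha^n_{i,j}$ is nonnegative — plus the frozen-argument terms which, summed over $K_{i,j}$ and its neighbours via the fundamental theorem of calculus in $\boldsymbol x$, reassemble into $\int_{t_n}^{t_{n+1}}\fint_{K_{i,j}}\mathrm{div}_{\boldsymbol x}\boldsymbol F(t,\boldsymbol x,\alpha^n_{i,j})\,\mathrm{d}\boldsymbol x\,\mathrm{d}t$. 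This is the exact analog of the term $-f(\alpha^n_{i,j})\int\fint\mathrm{div}(\boldsymbol u)$ in~\eqref{eqn:conv_comb}, so the $L^\infty$ bound follows from the argument of Proposition~\ref{prop:boundedness} with $\mathrm{Lip}(\mathrm{div}_{\boldsymbol x}(\boldsymbol F))$ in place of $\mathrm{Lip}(f)$.

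For the spatial estimate I would form $\alpha^{n+1}_{i,j}-\alpha^{n+1}_{i-1,j}$, separate the $x$-contributions (built from $a,b$) from the $y$-contributions (built from $c,d$), and, since differencing in $x$ the transverse fluxes $c^n_{i,j\pm1/2}(\cdot)$, $d^n_{i,j\pm1/2}(\cdot)$ generates horizontal differences of vertical-flux values, apply the intermediate nodal-flux device of Step~\ref{ps.2} (Figure~\ref{fig:nodal_flux}) to $c$ and to $d$ separately, introducing artificial nodal fluxes $c^n(\alpha^n_{i-1,j},\alpha^n_{i,j+1})$, $d^n(\alpha^n_{i-1,j},\alpha^n_{i,j+1})$ so that the vertical differences are recast as combinations of $\alpha^n_{i,m}-\alpha^n_{i-1,m}$, $m\in\{j-1,j,j+1\}$. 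This produces the analog of~\eqref{eqn:diff_i:i-1}: a convex combination of $\alpha^n_{i,j}-\alpha^n_{l,m}$ over the four neighbours, plus terms carrying (i) $x$-differences of the transverse flux coefficients — controlled by the $\mathscr C^1$ regularity of $\boldsymbol F$ in~\ref{a.1} and the grid ratio bound of Definition~\ref{defn:admis_grid} — and (ii) the divergence term. Taking absolute values, multiplying by $h_j$, summing over $i,j$, and using the boundary condition on $\boldsymbol F$ to discard boundary contributions collapses the convex-combination part to $|\alpha_{h,\delta}(t_n,\cdot)|_{L^1_yBV_x}$; the symmetric computation gives the $y$-variation, and adding them yields a one-step inequality of the form~\eqref{eqn:alpha_sp_bv} with $|\mathrm{div}_{\boldsymbol x}\boldsymbol F|$ replacing $|\mathrm{div}(\boldsymbol u)|$. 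Induction on $n$ and $|\Pi_h^0(\mathrm{div}_{\boldsymbol x}\boldsymbol F)|_{BV_{x,y}}\le|\mathrm{div}_{\boldsymbol x}\boldsymbol F|_{BV_{x,y}}$ close the spatial bound, and the temporal bound then follows by transcribing Proposition~\ref{prop:temp_bv} verbatim, bounding $|\alpha_{h,\delta}|_{L^1_{x,y}BV_t}$ by a spatial-$BV$-weighted integral of the flux regularity plus $\mathrm{Lip}(\mathrm{div}_{\boldsymbol x}(\boldsymbol F))\,\|\alpha_{h,\delta}\|_{L^\infty}\,|\mathrm{div}_{\boldsymbol x}\boldsymbol F|_{L^1(\Omega_T)}$; combining with~\eqref{eqn:ts-bv-norm} gives $|\alpha_{h,\delta}|_{BV_{x,y,t}}\le\mathscr C$.

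The main obstacle — beyond the heavier bookkeeping of running the nodal-flux reduction on four flux pieces rather than one — is the treatment of the frozen-argument difference terms $\gamma^n_{i+1/2,j}(\alpha^n_{i,j})-\gamma^n_{i-1/2,j}(\alpha^n_{i,j})$ and their transverse analogs. Each must be split, as in the sum/difference decomposition~\eqref{eqn:sum-diff}, into a part proportional to $|\alpha^n_{i,j}-\alpha^n_{i-1,j}|$ — which is absorbed into the amplification factor through $\mathrm{Lip}(\partial_s\boldsymbol F)$ and $\mathrm{Lip}(\mathrm{div}_{\boldsymbol x}(\boldsymbol F))$ — and a part that is a discrete $\boldsymbol x$-difference of $\mathrm{div}_{\boldsymbol x}\boldsymbol F$ at a frozen argument, bounded via $|\mathrm{div}_{\boldsymbol x}\boldsymbol F|_{L^1_tBV_{x,y}}$ from~\ref{a.3}. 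Ensuring that, after the nodal-flux manipulations, these frozen-argument contributions recombine across the $a,b,c,d$ splitting into exactly $\partial_xF_1+\partial_yF_2=\mathrm{div}_{\boldsymbol x}\boldsymbol F$, with no uncontrolled leftover, is the delicate accounting step; it is precisely there that hypotheses~\ref{a.1} and~\ref{a.3} are consumed. Everything else is a direct transcription of Section~\ref{sec:b_var}.
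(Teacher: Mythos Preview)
Your proposal is correct and follows the same architecture as the paper's proof: rewrite the scheme as a convex combination of neighbouring values plus the source-like term $\int_{t_n}^{t_{n+1}}\fint_{K_{i,j}}\mathrm{div}_{\boldsymbol x}\boldsymbol F(t,\boldsymbol x,\alpha^n_{i,j})\,\mathrm{d}\boldsymbol x\,\mathrm{d}t$, then bound the horizontal difference of this source term by splitting it into a piece controlled by $\mathrm{Lip}(\mathrm{div}_{\boldsymbol x}\boldsymbol F)\,|\alpha^n_{i+1,j}-\alpha^n_{i,j}|$ and a piece controlled by $|\mathrm{div}_{\boldsymbol x}\boldsymbol F|_{L^1_tBV_{x,y}}$, and close by induction and the temporal argument of Proposition~\ref{prop:temp_bv}.

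The only difference is one of economy: for the convex-combination part (your nodal-flux reduction on $a,b,c,d$), the paper does not redo the machinery of Section~\ref{sec:b_var} but instead invokes the proof of \cite[Lemma~8]{claire_1999}, which already establishes the one-step $BV$ recursion for scheme~\eqref{eqn:scheme_2} in the divergence-free case. The paper's contribution is thus confined to the new term $T_{2,i}=\int\fint\mathrm{div}_{\boldsymbol x}\boldsymbol F$, handled exactly as you describe. Your self-contained route via the nodal-flux device works equally well and has the minor advantage of treating nonuniform grids directly within the paper's own framework, whereas \cite{claire_1999} is stated for uniform square grids; the paper's ``follow the proof of'' phrasing implicitly asks the reader to carry that adaptation.
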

	\end{subequations}
	\noindent The proof of Theorem~\ref{thm:non-lin:flux} is based on two main ideas. Firstly, the terms in the scheme~\eqref{eqn:scheme_2} are re--arranged and grouped appropriately so that the term  $\int_{t_n}^{t_{n}} \fint_{K_{i,j}} \mathrm{div}_{\boldsymbol{x}}(\boldsymbol{F})(t,\boldsymbol{x},\ats{n}{i,j})\,\mathrm{d}\boldsymbol{x}\mathrm{d}t$ can be separately estimated (see~\eqref{eqn:sch_2_mod}). Secondly, we employ the Lipschitz continuity of $\mathrm{div}_{\boldsymbol{x}}(\boldsymbol{F})$ to bound difference of the terms $\{\int_{t_n}^{t_{n+1}} \fint_{K_{l,j}}  \mathrm{div}_{\boldsymbol{x}}(\boldsymbol{F})(t,\boldsymbol{x},\ats{n+1}{l,j})\,\mathrm{d}\boldsymbol{x}\mathrm{d}t\;:\; l = i,i+1 \}$ by the $BV$ seminorms $\int_{t_n}^{t_{n+1}} |\alpha_{h,\delta}(t,\cdot)|_{BV_{x}}\,\mathrm{d}t$ and $\int_{t_n}^{t_{n+1}}|\mathrm{div}_{\boldsymbol{x}}(\boldsymbol{F})(t,\cdot,\cdot)|_{BV_{x}}\,\mathrm{d}t$.
	\begin{proof}
		Note that the scheme~\eqref{eqn:scheme_2} can be expressed as
	\begin{align}
		\ats{n+1}{i,j} ={}&  \left(\begin{array}{l}
		\ats{n}{i,j} - \Delta_{i,j}^{1,n}(\ats{n}{i,j},\ats{n}{i-1,j})(\ats{n}{i,j} - \ats{n}{i-1,j}) -  \Delta_{i,j}^{2,n}(\ats{n}{i,j},\ats{n}{i+1,j})(\ats{n}{i,j} - \ats{n}{i+1,j})\nonumber  \vspace{0.2cm}\\
		- \dfrac{1}{h_{j}} \left(c_{i,j-1/2}(\ats{n}{i,j}) - c_{i,j-1/2}(\ats{n}{i,j-1}) + d_{i,j+1/2}(\ats{n}{i,j+1}) - d_{i,j+1/2}(\ats{n}{i,j})\right) \end{array} \right) \quad  \\
		\label{eqn:sch_2_mod}
		&-  \left(\begin{array}{l} \displaystyle \int_{t_n}^{t_{n+1}} \fint_{K_{i,j}} \mathrm{div}_{\boldsymbol{x}}(\boldsymbol{F})(t,\boldsymbol{x},\ats{n}{i,j})\,\mathrm{d}\boldsymbol{x}\mathrm{d}t, \end{array} \right) =: \mathrm{T}_{1,i} - \mathrm{T}_{2,i}
		\end{align}
		where 
		\begin{align}
		\Delta_{i,j}^{1,n}(p,q)  = \dfrac{a_{i-1/2,j}^n(p) - a_{i-1/2,j}^n(q)}{p - q}\;\;\mathrm{ and }\;\;
		\Delta_{i,j}^{2,n}(p,q)  = \dfrac{b_{i+1/2,j}^n(p) - b_{i+1/2,j}^n(q)}{q - p}.
		\end{align}
			It is enough to estimate $|\alpha_{h,\delta}|_{L^1_yBV_x}$ as we did in the proof of Proposition~\ref{prop:space_bv}. Take the difference between the scheme~\eqref{eqn:sch_2_mod} written for $\ats{n+1}{i+1,j}$ and $\ats{n+1}{i,j}$. The difference $\mathrm{T}_{1,i+1} - \mathrm{T}_{1,i}$ can be estimated exactly as in the proof of~\cite[Lemma 8]{claire_1999}, wherein the CFL condition in Theorem~\ref{thm:non-lin:flux} enables us to express $\alpha_{i,j}^{n+1} - \alpha_{i-1,j}^{n+1}$ as a convex linear combination of differences at the previous time step $n$. Consider the difference $|\mathrm{T}_{2,i+1} - \mathrm{T}_{2,i}|$:
		\begin{multline}
		|\mathrm{T}_{2,i+1} - \mathrm{T}_{2,i}| \le{} \int_{t_{n}}^{t_{n+1}} \left| \fint_{K_{i+1,j}} \mathrm{div}_{\boldsymbol{x}}(\boldsymbol{F})(t,\boldsymbol{x},\ats{n}{i+1,j})\,\mathrm{d}\boldsymbol{x} - \fint_{K_{i,j}} \mathrm{div}_{\boldsymbol{x}}(\boldsymbol{F})(t,\boldsymbol{x},\ats{n}{i+1,j})\,\mathrm{d}\boldsymbol{x}  \right|\,\mathrm{d}t \\
		+ \int_{t_n}^{t_{n+1}} \left| \fint_{K_{i,j}} \mathrm{div}_{\boldsymbol{x}}(\boldsymbol{F})(t,\boldsymbol{x},\ats{n}{i+1,j})\,\mathrm{d}\boldsymbol{x} - \fint_{K_{i,j}} \mathrm{div}_{\boldsymbol{x}}(\boldsymbol{F})(t,\boldsymbol{x},\ats{n}{i,j})\,\mathrm{d}\boldsymbol{x}  \right|\,\mathrm{d}t 
		=: \mathrm{Q}_1 + \mathrm{Q}_{2}.
		\label{eqn:q1_q2}
		\end{multline}
		The term $\mathrm{Q}_2$ can be estimated as 
		\begin{align}
		\mathrm{Q}_2 \le \delta \left|\mathrm{Lip}(\mathrm{div}_{\boldsymbol{x}}(\boldsymbol{F}))\right|\,|\ats{n}{i+1,j} - \ats{n}{i,j}|.
		\label{eqn:q_2}
		\end{align}
		Follow the proof of~\cite[Lemma 8]{claire_1999} and use ~\eqref{eqn:q1_q2} and~\eqref{eqn:q_2} to obtain
		\begin{align}
		|\alpha_{h,\delta}(t_{n+1},\cdot)|_{BV_{x,y}} \le{}& |\alpha_{h,\delta}(t_{n},\cdot)|_{BV_{x,y}} \left(1 + 6\delta \mathrm{Lip}(\partial_s \boldsymbol{F}) + \delta \mathrm{Lip}(\mathrm{div}_{\boldsymbol{x}}(\boldsymbol{F}))  \right) \\
		&+ \int_{t_{n}}^{t_{n+1}}|\Pi_{h}^0(\mathrm{div}_{\boldsymbol{x}}(\boldsymbol{F}))|_{BV_{x,y}}\,\mathrm{d}t.
		\end{align}
		Apply induction on the above result and use similar arguments as in the proof of Proposition~\ref{prop:temp_bv} to obtain the desired result.
	\end{proof}
	
		\section{Numerical examples}
	\label{sec:num_tests}
	We consider three examples to demonstrate the conclusions of Theorem~\ref{thm:bv-alpha} and Theorem~\ref{thm:non-lin:flux}. In Example~\ref{eg:ex_1}, we manufacture a source term such that the conservation law~\eqref{eqn:exact_smooth} has a smooth solution. In Example~\ref{eg:ex_2}, the source term is set to be zero and a discontinuous function is chosen as the initial data, and as a result the exact solution also becomes discontinuous. Example~\ref{eg:ex_2} helps to understand how the discontinuities in the solution affect the growth of $BV$ seminorm. In Example~\ref{eg:ex.3}, we consider a conservation law with fully nonlinear flux with an exact solution and demonstrate conclusions of Theorem~\ref{thm:non-lin:flux}.
	\begin{example}[smooth solution]
		\label{eg:ex_1}
		We consider the spatial domain $\Omega = (-1,1)^2$, temporal domain $(0,1)$, velocity vector field $\boldsymbol{u} = (u,v)$ defined by
		\begin{align}
		u(t,x,y) := t\sin(\pi x)\,\cos(\pi y/2)/16\;\;\text{ and }\;\;
		v(t,x,y) := t\sin(\pi y)\,\cos(\pi x/2)/16,
		\end{align}
		initial data $\alpha_0(x,y) := 1\;\;\forall\,(x,y) \in \Omega$, and an appropriate source term $\mathfrak{S}$ such that the problem 
		\begin{align}
		\label{eqn:exact_smooth}
		\left.
		\begin{array}{r l}
		\partial_t \alpha + \mathrm{div}(\boldsymbol{u}f(\alpha)) &={} \mathfrak{S}\;\;\mathrm{ in }\; \Omega_1\;\;\textmd{and} \\
		\alpha(0,x,y) &= \alpha_0(x,y)\;\;\forall\,(x,y) \in \Omega,
		\end{array}
		\right\}
		\end{align}
		has the unique smooth solution $\alpha(t,x,y) = \exp(t(x+y))\;\;\forall\,(t,x,y) \in \Omega_1$, where $\Omega_1 = (0,1) \times \Omega$.
	\end{example}
	
	\begin{example}[discontinuous solution]
		\label{eg:ex_2} The spatial domain is $\Omega = (-3,3)^2$ and the temporal domain is $(0,2)$. If the flux function $f$ in~\eqref{eqn:exact_smooth} is linear, then we set the velocity vector field $\boldsymbol{u}$ as $(1,1)$ and the source term $\mathfrak{S}$ as zero so that the problem~\eqref{eqn:exact_smooth} has the unique solution $\alpha(t,x,y) := \alpha_0(x - t,y - t)$. The initial data considered is $\alpha_0(x,y) = \boldsymbol{1}_{[x > -1/4]}/2 + \boldsymbol{1}_{[y > -1/4]}/2$, where $\boldsymbol{1}_{A}$ is the characteristic function of the set $A$. If the flux function $f$ is nonlinear, then we set the velocity vector field $\boldsymbol{u} = (u,v)$ as 
		\begin{align}
		u(t,x,y) = \sin(\pi x)\,\cos(\pi y/2)/20\;\;\text{and}\;\;
		v(t,x,y) = \sin(\pi y)\,\cos(\pi x/2)/20.
		\end{align}
		Note that in the case of nonlinear flux, the vector $\boldsymbol{u}$ is zero on the boundary of the square $(-3,3)^2$, and as a result we can take the boundary data $(\alpha\boldsymbol{u})_{\vert \partial \Omega}\cdot\boldsymbol{n}_{\vert \partial \Omega} = 0$, where $\boldsymbol{n}_{\vert \partial \Omega}$ is the outward normal to $\partial \Omega$. This homogeneous boundary condition on $\boldsymbol{u}$ is useful since the exact solution to the problem~\eqref{eqn:exact_smooth} with a nonlinear flux is not available. The source term and the initial condition remain the same as in the case of linear flux. 
	\end{example}
	
	\begin{example}[fully nonlinear flux] \label{eg:ex.3}
		The spatial and temporal domains, initial data, and exact solution are chosen as in Example~\ref{eg:ex_1}. The nonlinear conservation law considered is 
		\begin{align}
		\label{eqn:nonlin_smooth}
		\left.
		\begin{array}{r l}
		\partial_t \alpha + \mathrm{div}\left(\sin((x-t)\alpha),\,\cos((y-t)\alpha)\right) &={} \mathfrak{S}_N\;\;\mathrm{ in }\; \Omega_1\;\;\textmd{and} \\
		\alpha(0,x,y) &= \alpha_0(x,y)\;\;\forall\,(x,y) \in \Omega.
		\end{array}
		\right\}
		\end{align}
		The source term $\mathfrak{S}_N$ is chosen such that~\eqref{eqn:nonlin_smooth} has the smooth solution $\alpha(t,x,y) = \exp(t(x+y))$. Note that the divergence of the flux $\mathrm{div}\left(\sin((x-t)\alpha),\,\cos((y-t)\alpha)\right) = \alpha\cos((x - t)\alpha) - \alpha\sin((y - t)\alpha)$ is not identically zero.
	\end{example}
	
	\noindent We consider two fluxes in the tests: (i) linear flux, $f(s) = s$ and (ii) sinusoidal flux, $f(s) = \sin(2 \pi s)$. The numerical flux used is Godunov defined by
	\begin{align}
	g(a,b) = \left\{
	\begin{array}{r l}
	\displaystyle\max_{b <  s < a}(f(s)) &\text{ if } b < a,  \\
	\displaystyle\min_{a <  s < b}(f(s)) &\text{ if } a < b.
	\end{array}
	\right.
	\end{align}
	The families of meshes considered are (a) hexagonal, (b) triangular, (c) staggered, (d) Cartesian, and (e) perturbed Cartesian (see Figures~\ref{fig:hexa}--\ref{fig:pert}). 
	\begin{figure}[htp]
		\centering
		\label{fig:grids}
		%	\begin{subfigure}[b]{0.3\textwidth}
		%		\centering
		%		\includegraphics[width=\textwidth]{}
		%		\caption{square}
		%		\label{fig:square}
		%	\end{subfigure}%
		\begin{subfigure}[b]{0.2\textwidth}
			\centering
			\includegraphics[width=\textwidth]{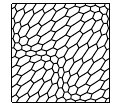}
			\caption{hexagonal}
			\label{fig:hexa}
		\end{subfigure}%
		\begin{subfigure}[b]{0.2\textwidth}
			\centering
			\includegraphics[width=\textwidth]{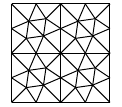}
			\caption{triangular}
			\label{fig:tria}
		\end{subfigure}%
		\begin{subfigure}[b]{0.2\textwidth}
			\centering
			\includegraphics[width=\textwidth]{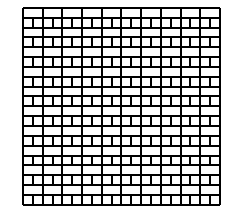}
			\caption{staggered}
			\label{fig:stag}
		\end{subfigure}%
		\begin{subfigure}[b]{0.2\textwidth}
			\centering
			\includegraphics[width=\textwidth]{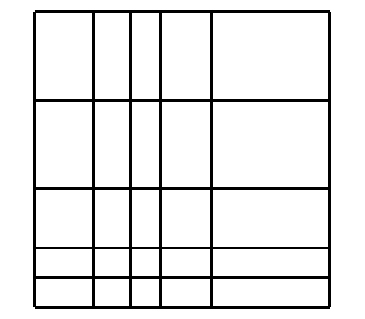}
			\caption{Cartesian}
			\label{fig:rect}
		\end{subfigure}%
		\begin{subfigure}[b]{0.2\textwidth}
			\centering
			\includegraphics[width=\textwidth]{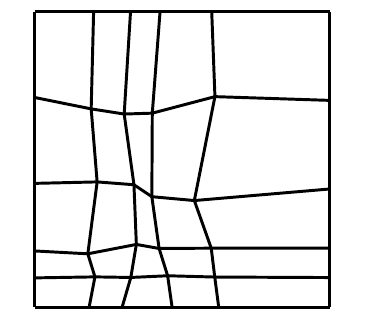}
			\caption{pert.Cartesian}
			\label{fig:pert}
		\end{subfigure}%
	\end{figure}
	
	\noindent The $BV$ and $L^1$ rates are defined by
	\begin{align}
	BV\;\mathrm{rate} =& \dfrac{\log\left(\left|\alpha_{h_{k+1},\delta_{k+1}}(T,\cdot)\right|_{BV_{x,y}}/\left|\alpha_{h_{k},\delta_{k}}(T,\cdot)\right|_{BV_{x,y}}\right)}{\log(h_{k+1}/h_{k})}\;\;\mathrm{ and } \\
	L^1\;\mathrm{rate} =& \dfrac{\log\left(\left|\alpha_{h_{k+1},\delta_{k+1}}(T,\cdot)\right|_{L^1(\Omega)}/\left|\alpha_{h_{k},\delta_{k}}(T,\cdot)\right|_{L^1(\Omega)}\right)}{\log(h_{k+1}/h_{k})}.
	\end{align}
	Discretisation factors, $BV$ norms, and $BV$ rates corresponding to Cartesian and perturbed Cartesian grids are presented in Tables~\ref{tab:tab1}--\ref{tab:tab4} and Tables~\ref{tab:tab5}--\ref{tab:tab8}. The $L^1$ errors and $L^1$ rates are also included whenever an exact solution is available. Arrangement of the contents in Tables~\ref{tab:tab1}--\ref{tab:tab8} are outlined in Table~\ref{tab:gen_ar} for clarity.
	The $BV$ rates corresponding to hexagonal, triangular, and staggered are presented in Table~\ref{tab:tab10} and Table~\ref{tab:tab9}. The quantities $L^1$ error and $L^1$ rate are omitted 
	for these three families of meshes since they follow a trend exactly similar to that of perturbed Cartesian grids. The $L^1$ and $BV$ rates of the discrete solutions obtained by applying scheme~\ref{eqn:scheme_2} to Example~\ref{eg:ex.3} is provided in Table~\ref{tab:tab11}.

	\begin{table}[htp]
		\centering
		\begin{tabular}{|c|c|c|c|}
			\hline
			\multicolumn{2}{|c|}{Tables showing $L^1$ and $BV$ rates} & \multirow{2}{*}{continuous flux} & \multirow{2}{*}{grid} \\ \cline{1-2}
			Example~\ref{eg:ex_1}   & Example~\ref{eg:ex_2}   &                                  &                       \\ \hline \hline
			Table~\ref{tab:tab1}	&     Table~\ref{tab:tab5}         & linear,  $f(s) = s$                         & Cartesian             \\ \hline
			Table~\ref{tab:tab2}	&     Table~\ref{tab:tab6}         & sinusoidal,  $f(s) = \sin(2\pi s)$                          & Cartesian             \\ \hline
			Table~\ref{tab:tab3}	&     Table~\ref{tab:tab7}         & linear,  $f(s) = s$                                & perturbed Cartesian   \\ \hline
			Table~\ref{tab:tab4}	&    Table~\ref{tab:tab8}          & sinusoidal,  $f(s) = \sin(2\pi s)$                              & perturbed Cartesian   \\ \hline 
		\end{tabular}
		\caption{Arrangement of contents in Tables~\ref{tab:tab1}--\ref{tab:tab4} and Tables~\ref{tab:tab5}--\ref{tab:tab8}.}
		\label{tab:gen_ar}
	\end{table} % The complete results are provided in the link \href{https://github.com/gopikrishnancr/FVM_cons_laws}{$https://github.com/gopikrishnancr/FVM\_cons\_laws$}. 
The captions of Tables~\ref{tab:tab1}--\ref{tab:tab4} and Tables~\ref{tab:tab6}--\ref{tab:tab10} are in the following format: \emph{example, continuous flux function, numerical flux function, grid type.}
	
	\subsection{Observations} 
		We recall three classical results from the theory of convergence analysis of finite volume schemes for conservation laws of the type~\eqref{eqn:cons_law}.
	\begin{enumerate}[label= $\mathrm{(R.\arabic*)}$,ref=$\mathrm{(R.\arabic*)}$,leftmargin=\widthof{(C.4)}+3\labelsep]
		\item\label{r.it.1} For a $BV$ initial data, finite volume approximations of conservation laws of the type~\eqref{eqn:cons_law} on structured Cartesian meshes converge with $h^{1/2}$ rate with respect to $L^\infty_tL^1$ norm \cite{Kuznetsov1976105}, and this result is extended to nearly Cartesian  meshes by B. Cockburn et al.~\cite{cockburn}. For generic meshes the $L^\infty_tL^1$ convergence rate is $h^{1/4}$~\cite[p.~188]{eymard}. 
		\item\label{r.it.2} The $BV$ seminorm of the finite volume solution grows with a rate not greater than $h^{-1/2}$~\cite[p.~168]{eymard}.  Further details can be found in \cite[p.~1777]{cockburn} and the references therein.
		\item \label{r.it.3} For $BV$ initial data finite volume approximations of nonlinear conservations of the type~\eqref{eqn:nonlin_smooth} converge with $h^{1/2}$ rate with respect to $L_1(\Omega_T)$ norm (see Theorem 4 and Remark 1 in~\cite{claire_1999}).
	\end{enumerate}
	
	In Tables~\ref{tab:tab1}--\ref{tab:tab7}, it can be observed that the order of convergence with respect to the $L^1$--norm is well above $1/4$. The $BV$ seminorm grows as $h$ decreases indicated by the negative values of $BV$ rate in Tables~\ref{tab:tab1}--\ref{tab:tab7}. But the growth rate is well below $h^{-1/2}$ except in the case of initial coarse meshes. The trend in  $L^1$ rate is related to the trend in $BV$ rate. A reduced $L^1$ rate is attributed to the fact that finite volume solutions on generic grids lack a uniform strong $BV$ estimate. The weak $BV$ estimate~\eqref{eqn:weak-bvrate} diminishes the $L^1$ rate from $h^{1/2}$ to $h^{1/4}$ in the case of non--Cartesian meshes.
	
  When the flux in linear, mesh is Cartesian (uniform or nonuniform), and~\eqref{eqn:cons_law} possesses a smooth solution, we obtain first order $L^1$ rate and the $BV$ rate decreases in magnitude but with oscillations. In the case of sinusoidal flux, the $L^1$ rate shows a slight reduction for coarse meshes but readily becomes well above $h^{1/2}$, which is the theoretical $L^1$ rate. Here also, $BV$ rate decreases in magnitude but with oscillations as $h$ decreases. 
	
	The numerical tests with perturbed Cartesian meshes also shows a similar behaviour. The linear flux exhibits first order $L^1$ rate and the sinusoidal flux a slightly reduced $L^1$ rate but well above $h^{1/2}$. However, the $BV$ rate shows a steady reduction in magnitude in both the linear and sinusoidal case. The $BV$ rate for hexagonal, triangular, and staggered meshes also show a steady decrease in magnitude as provided in Table~\ref{tab:tab9}.

	In Example~\ref{eg:ex_2}, we see a prominent reduction in the $L^1$ rate and this is due to the discontinuities in the weak solution to~\eqref{eqn:cons_law}. The explicit finite volume scheme introduce considerable numerical diffusion in the discrete solution by smearing out the sharp fronts, and thereby reducing the convergence rate. This reduction in the $L^1$ rate is visible in both the Cartesian and perturbed Cartesian cases (see Tables~\ref{tab:tab5} and~\ref{tab:tab7}). The  $BV$ rate is also decreasing in magnitude but with oscillations. In the sinusoidal flux case also $BV$ rates show the same pattern (see Tables~\ref{tab:tab6} and~\ref{tab:tab8}). For other non--Cartesian meshes also $BV$ rate seems to be decreasing in magnitude as presented in Table~\ref{tab:tab9}.
	
	\begin{table}[htp]
	\centering
	\begin{tabular}{|c |c H| H  c H   H | c  H | c|c|}
		\hline  
		\multirow{2}{*}{$h$} & \multirow{2}{*}{$\delta$} & \multirow{2}{*}{CFL ratio} & \multicolumn{3}{c|}{error} & \multicolumn{3}{c|}{rate}  &\multirow{2}{*}{$BV$ seminorm} &\multirow{2}{*}{$BV$ rate} \\ \cline{4-9}
		&             &                            & $L^\infty$ & $L^1$ & $L^2$ & $L^\infty$ & $L^1$ & $L^2$ &                      &     \\ \hline \hline 
		5.00e-01  &   2.50e-01              & 5.00e-01                         & 1.95e-01       & 1.37e-01   & 1.71e-01   & -        & -   & -   & 2.02e+01        & -                 \\ \hline 
		2.50e-01   &  1.25e-01            & 5.00e-01                         & 1.11e-01       & 7.19e-02   & 9.29e-02   & 8.07e-01        & 9.30e-01   & 8.84e-01   & 2.31e+01         & -1.92e-01                \\ \hline 
		1.25e-01   &  6.25e-02            & 5.00e-01                         & 5.91e-02       & 3.82e-02   & 4.93e-02   & 9.19e-01        & 9.08e-01   & 9.12e-01   & 3.06e+01         & -4.07e-01                \\ \hline 
		6.25e-02   &  3.12e-02            & 5.00e-01                         & 3.01e-02       & 1.98e-02   & 2.54e-02   & 9.69e-01        & 9.50e-01   & 9.58e-01   & 3.53e+01         & -2.05e-01                \\ \hline 
		3.12e-02   &  1.56e-02            & 5.00e-01                         & 1.52e-02       & 1.01e-02   & 1.28e-02   & 9.87e-01        & 9.72e-01   & 9.78e-01   & 3.79e+01         & -1.03e-01                \\ \hline 
	\end{tabular}
\allowdisplaybreaks
\vspace{-0.2cm}
	\caption{Example~\ref{eg:ex_1}, linear, Godunov, Cartesian} 
		\label{tab:tab1} 
	\vspace{0.5cm} 
	\begin{tabular}{|c |c H| H  c H   H | c  H | c|c|}
		\hline  
		\multirow{2}{*}{$h$} & \multirow{2}{*}{$\delta$} & \multirow{2}{*}{CFL ratio} & \multicolumn{3}{c|}{error} & \multicolumn{3}{c|}{rate}  &\multirow{2}{*}{$BV$ seminorm} &\multirow{2}{*}{$BV$ rate} \\ \cline{4-9}
		&             &                            & $L^\infty$ & $L^1$ & $L^2$ & $L^\infty$ & $L^1$ & $L^2$ &                      &     \\ \hline \hline 
		5.00$\mathrm{e}$-01  &   3.97$\mathrm{e}$-02              & 5.00$\mathrm{e}$-01                         & 3.87$\mathrm{e}$-02       & 3.32$\mathrm{e}$-02   & 3.39$\mathrm{e}$-02   & -        & -   & -   & 2.42$\mathrm{e}$+01        & -                 \\ \hline 
		2.50$\mathrm{e}$-01   &  1.98$\mathrm{e}$-02            & 5.00$\mathrm{e}$-01                         & 2.46$\mathrm{e}$-02       & 3.35$\mathrm{e}$-02   & 3.10$\mathrm{e}$-02   & 6.49$\mathrm{e}$-01        & -1.02$\mathrm{e}$-02   & 1.26$\mathrm{e}$-01   & 3.20$\mathrm{e}$+01         & -3.99$\mathrm{e}$-01                \\ \hline 
		1.25$\mathrm{e}$-01   &  9.94$\mathrm{e}$-03            & 5.00$\mathrm{e}$-01                         & 1.87$\mathrm{e}$-02       & 2.59$\mathrm{e}$-02   & 2.33$\mathrm{e}$-02   & 3.98$\mathrm{e}$-01        & 3.71$\mathrm{e}$-01   & 4.10$\mathrm{e}$-01   & 3.78$\mathrm{e}$+01         & -2.41$\mathrm{e}$-01                \\ \hline 
		6.25$\mathrm{e}$-02   &  4.97$\mathrm{e}$-03            & 5.00$\mathrm{e}$-01                         & 1.66$\mathrm{e}$-02       & 1.64$\mathrm{e}$-02   & 1.61$\mathrm{e}$-02   & 1.66$\mathrm{e}$-01        & 6.53$\mathrm{e}$-01   & 5.37$\mathrm{e}$-01   & 4.03$\mathrm{e}$+01         & -9.13$\mathrm{e}$-02                \\ \hline 
		3.12$\mathrm{e}$-02   &  2.48$\mathrm{e}$-03            & 5.00$\mathrm{e}$-01                         & 1.52$\mathrm{e}$-02       & 9.58$\mathrm{e}$-03   & 1.03$\mathrm{e}$-02   & 1.30$\mathrm{e}$-01        & 7.81$\mathrm{e}$-01   & 6.33$\mathrm{e}$-01   & 4.12$\mathrm{e}$+01         & -3.13$\mathrm{e}$-02                \\ \hline 
	\end{tabular} 
\allowdisplaybreaks
\vspace{-0.2cm}
	\caption{Example~\ref{eg:ex_1}, sinusoidal, Godunov, Cartesian} 
		\label{tab:tab2}
		\vspace{0.5cm}
	\begin{tabular}{|c |c H| H  c H   H | c  H | c|c|}
		\hline  
		\multirow{2}{*}{$h$} & \multirow{2}{*}{$\delta$} & \multirow{2}{*}{CFL ratio} & \multicolumn{3}{c|}{error} & \multicolumn{3}{c|}{rate}  &\multirow{2}{*}{$BV$ seminorm} &\multirow{2}{*}{$BV$ rate} \\ \cline{4-9}
		&             &                            & $L^\infty$ & $L^1$ & $L^2$ & $L^\infty$ & $L^1$ & $L^2$ &                      &     \\ \hline \hline 
		5.70$\mathrm{e}$-01  &   2.85$\mathrm{e}$-01              & 5.00$\mathrm{e}$-01                         & 2.21$\mathrm{e}$-01       & 1.54$\mathrm{e}$-01   & 1.92$\mathrm{e}$-01   & -        & -   & -   & 1.97$\mathrm{e}$+01        & -                 \\ \hline 
		3.01$\mathrm{e}$-01   &  1.50$\mathrm{e}$-01            & 5.00$\mathrm{e}$-01                         & 1.34$\mathrm{e}$-01       & 8.76$\mathrm{e}$-02   & 1.12$\mathrm{e}$-01   & 7.79$\mathrm{e}$-01        & 8.95$\mathrm{e}$-01   & 8.42$\mathrm{e}$-01   & 2.71$\mathrm{e}$+01         & -4.99$\mathrm{e}$-01                \\ \hline 
		1.52$\mathrm{e}$-01   &  7.62$\mathrm{e}$-02            & 5.00$\mathrm{e}$-01                         & 7.18$\mathrm{e}$-02       & 4.65$\mathrm{e}$-02   & 6.00$\mathrm{e}$-02   & 9.23$\mathrm{e}$-01        & 9.26$\mathrm{e}$-01   & 9.22$\mathrm{e}$-01   & 3.30$\mathrm{e}$+01         & -2.88$\mathrm{e}$-01                \\ \hline 
		8.40$\mathrm{e}$-02   &  4.20$\mathrm{e}$-02            & 5.00$\mathrm{e}$-01                         & 4.01$\mathrm{e}$-02       & 2.61$\mathrm{e}$-02   & 3.36$\mathrm{e}$-02   & 9.75$\mathrm{e}$-01        & 9.68$\mathrm{e}$-01   & 9.68$\mathrm{e}$-01   & 3.64$\mathrm{e}$+01         & -1.65$\mathrm{e}$-01                \\ \hline 
		4.21$\mathrm{e}$-02   &  2.10$\mathrm{e}$-02            & 5.00$\mathrm{e}$-01                         & 2.03$\mathrm{e}$-02       & 1.33$\mathrm{e}$-02   & 1.71$\mathrm{e}$-02   & 9.88$\mathrm{e}$-01        & 9.80$\mathrm{e}$-01   & 9.77$\mathrm{e}$-01   & 3.84$\mathrm{e}$+01         & -7.84$\mathrm{e}$-02                \\ \hline 
	\end{tabular}
\vspace{-0.2cm}
\caption{Example~\ref{eg:ex_1}, linear, Godunov, perturbed Cartesian} 
\label{tab:tab3} 
\vspace{0.5cm}
	\begin{tabular}{|c |c H| H  c H   H | c  H | c|c|}
		\hline  
		\multirow{2}{*}{$h$} & \multirow{2}{*}{$\delta$} & \multirow{2}{*}{CFL ratio} & \multicolumn{3}{c|}{$\mathrm{e}$rror} & \multicolumn{3}{c|}{rate}  &\multirow{2}{*}{$BV$ seminorm} &\multirow{2}{*}{$BV$ rate} \\ \cline{4-9}
		&             &                            & $L^\infty$ & $L^1$ & $L^2$ & $L^\infty$ & $L^1$ & $L^2$ &                      &     \\ \hline \hline 
		5.70$\mathrm{e}$-01  &   4.54$\mathrm{e}$-02              & 5.00$\mathrm{e}$-01                         & 4.17$\mathrm{e}$-02       & 5.44$\mathrm{e}$-02   & 5.09$\mathrm{e}$-02   & -        & -   & -   & 2.42$\mathrm{e}$+01        & -                 \\ \hline 
		3.01$\mathrm{e}$-01   &  2.40$\mathrm{e}$-02            & 5.00$\mathrm{e}$-01                         & 2.83$\mathrm{e}$-02       & 3.74$\mathrm{e}$-02   & 3.51$\mathrm{e}$-02   & 6.06$\mathrm{e}$-01        & 5.87$\mathrm{e}$-01   & 5.84$\mathrm{e}$-01   & 3.31$\mathrm{e}$+01         & -4.91$\mathrm{e}$-01                \\ \hline 
		1.52$\mathrm{e}$-01   &  1.21$\mathrm{e}$-02            & 5.00$\mathrm{e}$-01                         & 1.90$\mathrm{e}$-02       & 2.69$\mathrm{e}$-02   & 2.44$\mathrm{e}$-02   & 5.85$\mathrm{e}$-01        & 4.81$\mathrm{e}$-01   & 5.30$\mathrm{e}$-01   & 3.73$\mathrm{e}$+01         & -1.75$\mathrm{e}$-01                \\ \hline 
		8.40$\mathrm{e}$-02   &  6.68$\mathrm{e}$-03            & 5.00$\mathrm{e}$-01                         & 1.67$\mathrm{e}$-02       & 1.77$\mathrm{e}$-02   & 1.71$\mathrm{e}$-02   & 2.14$\mathrm{e}$-01        & 7.06$\mathrm{e}$-01   & 5.98$\mathrm{e}$-01   & 4.13$\mathrm{e}$+01         & -1.70$\mathrm{e}$-01                \\ \hline 
		4.21$\mathrm{e}$-02   &  3.35$\mathrm{e}$-03            & 5.00$\mathrm{e}$-01                         & 1.51$\mathrm{e}$-02       & 1.01$\mathrm{e}$-02   & 1.08$\mathrm{e}$-02   & 1.45$\mathrm{e}$-01        & 8.11$\mathrm{e}$-01   & 6.63$\mathrm{e}$-01   & 4.19$\mathrm{e}$+01         & -1.84$\mathrm{e}$-02                \\ \hline 
	\end{tabular} 
\vspace{-0.2cm}
\caption{Example~\ref{eg:ex_1}, sinusoidal, Godunov, perturbed Cartesian} 
\label{tab:tab4}
\vspace{0.5cm}

	\begin{tabular}{|c|c|c|c||c|c|c|}
		\hline
		\multirow{6}{*}{\rotatebox{+90}{$\xleftarrow[]{h\mbox{\;\;\text{decreasing}}}$}} &
		\multicolumn{3}{c|}{\begin{tabular}[c]{@{}c@{}}$BV$ rate\\ (linear)\end{tabular}} &
		\multicolumn{3}{c|}{\begin{tabular}[c]{@{}c@{}}$BV$ rate\\ (sinusoidal)\end{tabular}} \\ \cline{2-7} 
		& hexagonal & triangular & staggered & hexagonal & triangular & staggered \\ \cline{2-7} 
		& -2.49E-01 & -2.43E-01  &           & -1.46E-01 & -2.39E-01  &           \\ \cline{2-7} 
		& -1.62E-01 & -1.26E-01  & -1.77E-01 & -6.87E-02 & -1.27E-01  & -9.39E-02 \\ \cline{2-7} 
		& -1.04E-01 & -6.79E-02  & -8.83E-02 & -2.41E-02 & -4.01E-02  & -3.10E-02 \\ \cline{2-7} 
		& -2.62E-01 & -3.80E-02  & -2.59E-02 & -1.28E-03 & -5.11E-03  & -1.13E-02 \\ \hline
	\end{tabular}
\vspace{-0.2cm}
	\caption{Example~\ref{eg:ex_1} -- Trend in the rate of BV norm for a smooth solution of~\eqref{eqn:cons_law}.} 
	\label{tab:tab10}
\end{table}

	\begin{table}[htp]
	\centering
	\begin{tabular}{|c |c H| H  c H   H | c  H | c|c|}
		\hline  
		\multirow{2}{*}{$h$} & \multirow{2}{*}{$\delta$} & \multirow{2}{*}{CFL ratio} & \multicolumn{3}{c|}{error} & \multicolumn{3}{c|}{rate}  &\multirow{2}{*}{$BV$ seminorm} &\multirow{2}{*}{$BV$ rate} \\ \cline{4-9}
		&             &                            & $L^\infty$ & $L^1$ & $L^2$ & $L^\infty$ & $L^1$ & $L^2$ &                      &     \\ \hline \hline 
		3.00E+00  &   9.37E-02              & 5.00E-01                         & 4.95E-01       & 4.14E-01   & 4.32E-01   & -        & -   & -   & 4.26E+00        & -                 \\ \hline 
		1.50E+00   &  4.68E-02            & 5.00E-01                         & 5.76E-01       & 8.16E-01   & 6.35E-01   & -2.17E-01        & -9.78E-01   & -5.53E-01   & 5.57E+00         & -3.85E-01                \\ \hline 
		7.50E-01   &  2.34E-02            & 5.00E-01                         & 7.81E-01       & 4.74E-01   & 4.43E-01   & -4.39E-01        & 7.81E-01   & 5.18E-01   & 6.33E+00         & -1.85E-01                \\ \hline 
		3.75E-01   &  1.17E-02            & 5.00E-01                         & 8.05E-01       & 3.70E-01   & 3.81E-01   & -4.34E-02        & 3.59E-01   & 2.15E-01   & 7.69E+00         & -2.80E-01                \\ \hline 
		1.87E-01   &  5.85E-03            & 5.00E-01                         & 1.10E+00       & 2.87E-01   & 3.36E-01   & -4.49E-01        & 3.66E-01   & 1.83E-01   & 8.75E+00         & -1.86E-01                \\ \hline 
	\end{tabular}
\vspace{-0.2cm}
	\caption{Example~\ref{eg:ex_2}, linear, Godunov, Cartesian} 
	\label{tab:tab5} 
\vspace{0.5cm}
	\begin{tabular}{|c |c  H| H  H H   H  H  H c|c|}
		\hline  
		\multirow{2}{*}{$h$} & \multirow{2}{*}{$\delta$} & \multirow{2}{*}{CFL ratio} & \multicolumn{3}{c}{} & \multicolumn{3}{c}{}  &\multirow{2}{*}{$BV$ seminorm} &\multirow{2}{*}{$BV$ rate} \\ 
		&             &                            & $L^\infty$ & $L^1$ & $L^2$ & $L^\infty$ & $L^1$ & $L^2$ &                      &     \\ \hline \hline 
		3.00E+00  &   1.49E-02              & 5.00E-01                         & 9.89E-01       & 6.81E-01   & 7.58E-01   & -        & -   & -   & 6.32E+00        & -                 \\ \hline 
		1.50E+00   &  7.46E-03            & 5.00E-01                         & 9.99E-01       & 2.21E+00   & 1.59E+00   & -1.51E-02        & -1.70E+00   & -1.07E+00   & 6.35E+00         & -7.31E-03                \\ \hline 
		7.50E-01   &  3.73E-03            & 5.00E-01                         & 1.00E+00       & 1.52E+00   & 1.27E+00   & -2.29E-05        & 5.33E-01   & 3.23E-01   & 6.60E+00         & -5.44E-02                \\ \hline 
		3.75E-01   &  1.86E-03            & 5.00E-01                         & 1.00E+00       & 1.50E+00   & 1.26E+00   & 0.00E+00        & 2.80E-02   & 1.42E-02   & 6.76E+00         & -3.52E-02                \\ \hline 
		1.87E-01   &  9.32E-04            & 5.00E-01                         & 1.00E+00       & 1.67E+00   & 1.35E+00   & 0.00E+00        & -1.60E-01   & -9.71E-02   & 7.08E+00         & -6.64E-02                \\ \hline 
	\end{tabular}
\vspace{-0.2cm}
\caption{Example~\ref{eg:ex_2}, sinusoidal, Godunov, Cartesian} 
\label{tab:tab6} 
\vspace{0.5cm}
	\begin{tabular}{|c |c H| H  c H   H | c  H | c|c|}
		\hline  
		\multirow{2}{*}{$h$} & \multirow{2}{*}{$\delta$} & \multirow{2}{*}{CFL ratio} & \multicolumn{3}{c|}{error} & \multicolumn{3}{c|}{rate}  &\multirow{2}{*}{$BV$ seminorm} &\multirow{2}{*}{$BV$ rate} \\ \cline{4-9}
		&             &                            & $L^\infty$ & $L^1$ & $L^2$ & $L^\infty$ & $L^1$ & $L^2$ &                      &     \\ \hline \hline 
		3.42E+00  &   1.06E-01              & 5.00E-01                         & 4.65E-01       & 3.98E-01   & 4.18E-01   & -        & -   & -   & 4.68E+00        & -                 \\ \hline 
		1.81E+00   &  5.65E-02            & 5.00E-01                         & 6.521E-01       & 7.24E-01   & 6.07E-01   & -5.27E-01        & -9.38E-01   & -5.86E-01   & 6.23E+00         & -4.49E-01                \\ \hline 
		9.14E-01   &  2.85E-02            & 5.00E-01                         & 6.34E-01       & 4.61E-01   & 4.26E-01   & 4.07E-02        & 6.59E-01   & 5.20E-01   & 6.48E+00         & -5.70E-02                \\ \hline 
		5.04E-01   &  1.57E-02            & 5.00E-01                         & 9.32E-01       & 3.70E-01   & 3.81E-01   & -6.47E-01        & 3.69E-01   & 1.86E-01   & 8.77E+00         & -5.07E-01                \\ \hline 
		2.53E-01   &  7.91E-03            & 5.00E-01                         & 1.05E+00       & 2.85E-01   & 3.33E-01   & -1.769E-01        & 3.78E-01   & 1.95E-01   & 9.51E+00         & -1.17E-01                \\ \hline 
	\end{tabular}
\vspace{-0.2cm}
\caption{Example~\ref{eg:ex_2}, linear, Godunov, perturbed Cartesian} 
\label{tab:tab7} 
\vspace{0.5cm}
	\begin{tabular}{|c |c  H| H  H H   H  H  H  c|c|}
		\hline  
		\multirow{2}{*}{$h$} & \multirow{2}{*}{$\delta$} & \multirow{2}{*}{CFL ratio} & \multicolumn{3}{c}{} & \multicolumn{3}{c}{}  &\multirow{2}{*}{$BV$ seminorm} &\multirow{2}{*}{$BV$ rate} \\ 
		&             &                            & $L^\infty$ & $L^1$ & $L^2$ & $L^\infty$ & $L^1$ & $L^2$ &                      &     \\ \hline \hline 
		3.42E+00  &   1.06E-01              & 5.00E-01                         & 4.65E-01       & 3.98E-01   & 4.18E-01   & -        & -   & -   & 6.32E+00        & -                 \\ \hline 
		1.81E+00   &  5.65E-02            & 5.00E-01                         & 6.52E-01       & 7.24E-01   & 6.07E-01   & -5.27E-01        & -9.38E-01   & -5.86E-01   & 6.54E+00         & -4.95E-02                \\ \hline 
		9.14E-01   &  2.85E-02            & 5.00E-01                         & 6.34E-01       & 4.61E-01   & 4.26E-01   & 4.07E-02        & 6.59E-01   & 5.20E-01   & 6.70E+00         & -3.51E-02                \\ \hline 
		5.042E-01   &  1.57E-02            & 5.00E-01                         & 9.32E-01       & 3.70E-01   & 3.81E-01   & -6.47E-01        & 3.69E-01   & 1.86E-01   & 7.57E+00         & -2.05E-01                \\ \hline 
		2.53E-01   &  7.91E-03            & 5.00E-01                         & 1.05E+00       & 2.85E-01   & 3.33E-01   & -1.76E-01        & 3.78E-01   & 1.95E-01   & 7.28E+00         & -5.75E-02                \\ \hline 
	\end{tabular}
\vspace{-0.2cm}
	\caption{Example~\ref{eg:ex_2}, sinusoidal, Godunov, perturbed Cartesian} 
	\label{tab:tab8} 
\vspace{0.5cm}
	\begin{tabular}{|c|H c|c|H c|c|c|c|}
		\hline
		\multirow{6}{*}{\rotatebox{+90}{$\xleftarrow[]{h\mbox{\;\;\text{decreasing}}}$}} &
		\multicolumn{4}{c|}{\begin{tabular}[c]{@{}c@{}}$BV$ rate\\ (linear flux)\end{tabular}} &
		\multicolumn{4}{c|}{\begin{tabular}[c]{@{}c@{}}$BV$ rate\\ (sinusoidal flux)\end{tabular}} \\ \cline{2-9} 
		& square    & hexagonal & triangular & staggered & square    & hexagonal & triangular & staggered \\ \cline{2-9} 
		& 4.66E-02  & -4.32E-01 & -3.90E-01  &           & 6.54E-02  & -7.19E-02 & -4.91E-01  &           \\ \cline{2-9} 
		& -4.44E-01 & -3.77E-01 & -3.61E-01  & -2.98E-01 & -1.89E-01 & -6.92E-02 & 1.93E-01   & 1.46E-01  \\ \cline{2-9} 
		& -2.16E-01 & -3.14E-01 & -6.60E-02  & -1.95E-01 & -4.14E-02 & -3.16E-02 & -2.39E-02  & -1.24E-01 \\ \cline{2-9} 
		& -1.37E-01 & -3.11E-01 & -1.71E-01  & -1.92E-01 & -4.88E-03 & -1.10E-03 & -9.69E-03  & -1.24E-04 \\ \hline
	\end{tabular}
\vspace{-0.2cm}
	\caption{Example~\ref{eg:ex_2} -- Trend in $BV$ rate for a discontinuous solution of~\eqref{eqn:cons_law}.} 
	\label{tab:tab9}
\end{table}
	
	\begin{table}[htp]
		\centering
		\begin{tabular}{|c |c H| H  c H   H | c  H | c|c|}
			\hline  
			\multirow{2}{*}{$h$} & \multirow{2}{*}{$\delta$} & \multirow{2}{*}{CFL ratio} & \multicolumn{3}{c|}{error} & \multicolumn{3}{c|}{rate}  &\multirow{2}{*}{$BV$ seminorm} &\multirow{2}{*}{$BV$ rate} \\ \cline{4-9}
			&             &                            & $L^\infty$ & $L^1$ & $L^2$ & $L^\infty$ & $L^1$ & $L^2$ &                      &     \\ \hline \hline 
			5.00$\mathrm{e}$-01  &   3.97$\mathrm{e}$-02              & 5.00$\mathrm{e}$-01                         & 3.87$\mathrm{e}$-02       & 1.47$\mathrm{e}$-01   & 3.39$\mathrm{e}$-02   & -        & -   & -   & 2.45$\mathrm{e}$+00        & -                 \\ \hline 
			2.50$\mathrm{e}$-01   &  1.98$\mathrm{e}$-02            & 5.00$\mathrm{e}$-01                         & 2.46$\mathrm{e}$-02       & 1.08$\mathrm{e}$-01   & 3.10$\mathrm{e}$-02   & 6.49$\mathrm{e}$-01        & 4.35$\mathrm{e}$-01   & 1.26$\mathrm{e}$-01   & 3.58$\mathrm{e}$+00        & -5.43$\mathrm{e}$-01                \\ \hline 
			1.25$\mathrm{e}$-01   &  9.94$\mathrm{e}$-03            & 5.00$\mathrm{e}$-01                         & 1.87$\mathrm{e}$-02       & 7.48$\mathrm{e}$-02   & 2.33$\mathrm{e}$-02   & 3.98$\mathrm{e}$-01        & 5.40$\mathrm{e}$-01   & 4.10$\mathrm{e}$-01   & 4.57$\mathrm{e}$+00         & -3.54$\mathrm{e}$-01                \\ \hline 
			6.25$\mathrm{e}$-02   &  4.97$\mathrm{e}$-03            & 5.00$\mathrm{e}$-01                         & 1.66$\mathrm{e}$-02       & 4.74$\mathrm{e}$-02   & 1.61$\mathrm{e}$-02   & 1.66$\mathrm{e}$-01        & 6.58$\mathrm{e}$-01   & 5.37$\mathrm{e}$-01   & 5.33$\mathrm{e}$+00        & -2.21$\mathrm{e}$-01                \\ \hline 
			3.12$\mathrm{e}$-02   &  2.48$\mathrm{e}$-03            & 5.00$\mathrm{e}$-01                         & 1.52$\mathrm{e}$-02       & 2.80$\mathrm{e}$-02   & 1.03$\mathrm{e}$-02   & 1.30$\mathrm{e}$-01        & 7.58$\mathrm{e}$-01   & 6.33$\mathrm{e}$-01   & 5.88$\mathrm{e}$+00         & -1.40$\mathrm{e}$-01                \\ \hline 
		\end{tabular}
		\caption{Example~\ref{eg:ex.3} -- Fully nonlinear flux and Cartesian grid.} 
		\label{tab:tab11}  
	\end{table}
	In the of conservation laws with fully nonlinear flux, it is clear from Table~\ref{tab:tab11} that the $BV$ rate is decreasing in magnitude steadily as $h$ decreases. This complements the uniform BV estimates in Theorem~\ref{thm:non-lin:flux}. The $L^1$ rate is also greater than the theoretical rate of $h^{1/2}$ except for the initial coarse mesh (see result~\ref{r.it.3}). Table~\ref{tab:tab11} also complements in~\cite[Lemma 8, Theorem 4]{claire_1999}, which provide the  boundedness of the $BV$ seminorm of discrete solutions corresponding to uniform square Cartesian grids. 
	\begin{remark}
		Choice of the functions $a$, $b$, $c$, and $d$ for the scheme~\eqref{eqn:scheme_2} is not arbitrary. It is crucial that $a$ and $c$ and nondecreasing, $b$ and $d$ are nonincreasing, and the CFL condition in Theorem~\ref{thm:non-lin:flux} holds. We use the following pairs to obtain the results provided in Table~\ref{tab:tab11}:
		\begin{align}
		a(t,x,y,z) = (\sin((x-t) z) + \mathfrak{M}z)/2, \quad b(t,x,y,z) = (\sin((x-t) z) - \mathfrak{M}z)/2, \\
		c(t,x,y,z) = (\cos((y-t) z) + \mathfrak{M}z)/2, \quad d(t,x,y,z) = (\cos((y-t) z) - \mathfrak{M}z)/2,
		\end{align}
		where $\mathfrak{M} = Lip(\boldsymbol{F})$. This choice of $\mathfrak{M}$ ensures the monotonicity conditions required by $a$, $b$, $c$, and $d$. Moreover, $a$, $b$, $c$, and $d$ become Lipschitz continuous with Lipschitz constant $Lip(\boldsymbol{F})$ so that the CFL condition in Theorem~\ref{thm:non-lin:flux} holds. 
	\end{remark}
	\begin{table}[htp]
	\centering
	\begin{tabular}{|c|c H| H H H H H H c|c|}
		\hline  
		\multirow{2}{*}{$h$} & \multirow{2}{*}{$\delta$} & \multirow{2}{*}{CFL ratio} & \multicolumn{3}{c}{} & \multicolumn{3}{c}{}  &\multirow{2}{*}{$BV$ seminorm} &\multirow{2}{*}{$BV$ rate} \\ 
		&             &                            & $L^\infty$ & $L^1$ & $L^2$ & $L^\infty$ & $L^1$ & $L^2$ &                      &     \\ \hline \hline 
		1.00E-01  &   1.00E-01              & 1.00E+00                         & 1.00E+00       & 3.33E-01   & 5.70E-01   & -        & -   & -   & 3.90E+00        & -                 \\ \hline 
		5.00E-02   &  5.00E-02            & 1.00E+00                         & 1.00E+00       & 5.00E-01   & 6.62E-01   & 0.00E+00        & -5.84E-01   & -2.16E-01   & 5.65E+00         & -5.36E-01                \\ \hline 
		2.50E-02   &  2.50E-02            & 1.00E+00                         & 1.00E+00       & 5.00E-01   & 6.75E-01   & 0.00E+00        & -1.40E-14   & -2.81E-02   & 6.86E+00         & -2.78E-01                \\ \hline 
		1.25E-02   &  1.25E-02            & 1.00E+00                         & 1.00E+00       & 5.00E-01   & 6.84E-01   & 0.00E+00        & -3.74E-14   & -1.98277E-02   & 9.00E+00         & -3.92E-01                \\ \hline 
		6.25E-03   &  6.25E-03            & 1.00E+00                         & 1.00E+00       & 5.00E-01   & 6.91E-01   & 0.00E+00        & -2.08E-13   & -1.38E-02   & 1.19E+01         & -4.14E-01                \\ \hline 
	\end{tabular}
	\caption{$BV$ seminorms of the finite volume solutions corresponding to~\eqref{eqn:despress} on staggered meshes. The parameters used are $\ell = 1$ and $T = 1/4$.} 
	\label{tab:tab_des} 
\end{table}
	\subsection{A remark on strong $BV$ estimate for non--Cartesian grids}
	In the case of Cartesian mesh, note that the $BV$ rate decreases in magnitude as $h$ decreases and the $BV$ seminorm stabilises eventually, which agrees with the conclusion of Theorem~\ref{thm:bv-alpha}. This is also supported by the higher values of $L^1$ rate than the theoretically predicted ones and the fact that the reduced convergence rate stems from lack of a strong $BV$ estimate\ (see result~\ref{r.it.1}). 
	
	Similar trends can be observed in the case of perturbed Cartesian grids also. These trends indicate there might be a possible way by which analysis in this article and in the previous works~\cite{claire_1999} could be extended to non--Cartesian grids also. Any such uniform estimate on strong $BV$ immediately provides a proof for the improved convergence rates. However, as of now any analytical proof of a strong $BV$ estimate on meshes other than nonuniform Cartesian grids is not available in the literature. A strong obstacle in this direction is the counterexample provided by B. Despr\'{e}s~\cite{despres}. This article~\cite{despres} presents an analytical proof that shows the $BV$ seminorms of finite volume solutions on a staggered grid, see Figure~\ref{fig:stag}, to the problem
	\begin{align}
	\left.
	\begin{array}{r l}
	\partial_t \alpha + u\,\partial_x \alpha &= 0,\;\;\mathrm{ for }\;\;(x,y) \in (-\ell,\ell)^2,\;\; 0 < t < T\\
	\alpha(t,x,y) &= \alpha_0(x,y)\;\;\mathrm{ for }\;\;(x,y) \in (-\ell,\ell)^2,
	\end{array}
	\right\}
	\label{eqn:despress}
	\end{align}
	with $\ell = 1$, $u = 1$, and $\alpha_0(x,y) = H(x - 1/2)$, where $H$ is the Heaviside step function blows up with an order greater than $h^{-1/2}$. This is supported by numerical experiments also. In Table~\ref{tab:tab_des} it is evident that the $BV$ seminorm is increasing and the rate of increase is also growing towards the theoretical rate of $h^{-1/2}$. Considering this result also, the uniform $BV$ estimate on non--Cartesian grids needs a deeper investigation. 
	
	\section{Extension to three spatial dimensions}
	\label{sec:3d-bv}
	An analogous result to Theorem~\ref{thm:bv-alpha} can be derived in a three spatial dimensional setting. The main result in stated in Theorem~\ref{thm:3d-bv-alpha}. The proof is omitted since it is similar to the proof of Theorem~\ref{thm:bv-alpha} and only more technical as a result of an extra spatial dimension.  Consider the partial differential equation on the time--space domain $\widehat{\Omega}_T = (0,T) \times \widehat{\Omega}$, wherein $\widehat{\Omega}  := (a_{\mathrm{L}},a_{\mathrm{R}}) \times (b_{\mathrm{L}},b_{\mathrm{R}}) \times (c_{\mathrm{L}},c_{\mathrm{R}})$ described by
	\begin{align}
	\label{eqn:3d-cons_law}\left.
	\begin{array}{r l}
	\partial_t \alpha + \mathrm{div}(\boldsymbol{u}f(\alpha)) &={} 0\;\;\mathrm{ in }\; \widehat{\Omega}_T\;\;\textmd{and} \\
	\alpha(0,\cdot) &= \alpha_0\;\;\mathrm{ in }\; \widehat{\Omega},
	\end{array}
	\right\}
	\end{align}
	where $\boldsymbol{u} = (u,v,w)$. The assumptions~\ref{as.1},~\ref{as.2}, and the condition~$\boldsymbol{u}_{\vert \partial \widehat{\Omega}} = \boldsymbol{0}$ hold. In addition to assume~\ref{as.3d-3} below.
	\begin{enumerate}[label= $\mathrm{(AS.3^{*})}$,ref=$\mathrm{(AS.3^{*})}$,leftmargin=\widthof{(AS.4)}+3\labelsep]
		\item \label{as.3d-3} There exists a generic constant $\mathscr{C} \ge 0$ such that $$\max\left( ||\boldsymbol{u}||_{L_t^1L^\infty(\widehat{\Omega}_T)}, ||\nabla \boldsymbol{u}||_{L_t^1L^\infty(\widehat{\Omega}_T)}, |\mathrm{div}(\boldsymbol{u})|_{L_t^1BV_{x,y,z}} \right) \le \mathscr{C}  < \infty.$$ 
	\end{enumerate}
	The temporal grid is same as in Section~\ref{sec:presentation}.  An admissible grid on the cube,  $\widehat{\Omega}$, is defined next.
	\begin{definition}[three dimensional admissible grid]
		\label{defn:3d-admis_grid}
		Define the one dimensional discretisations $\mathrm{X}_k := \left\{x_{-1/2},\cdots,x_{I+1/2}\right\}$, $\mathrm{Y}_h := \{y_{-1/2},\cdots,y_{J+1/2}\}$, and $\mathrm{Z}_{l}: = \{z_{-1/2},\cdots,z_{L+1/2}\}$, where $x_{-1/2} = a_{\mathrm{L}}$, $x_{I+1/2} = a_{\mathrm{R}}$, $y_{-1/2} = b_{\mathrm{L}}$, $y_{I + 1/2} = b_{\mathrm{R}}$, $z_{-1/2} = c_{\mathrm{L}}$, $z_{L + 1/2} = c_{\mathrm{R}}$, $k_{i} = x_{i+1/2} - x_{i-1/2}$, $h_{j} = y_{j+1} - y_{j-1/2}$, $l_{m} = z_{m+1} - z_{m-1/2}$, $k := \max{k_i}$, $h := \max{h_j}$, and $l := \max{l_m}$. The Cartesian grid $\mathrm{X}_k \times \mathrm{Y}_h \times \mathrm{Z}_{l}$ is said to be a three dimensional admissible grid if the following hold: for a fixed constant $\widetilde{c} > 0$, $(\widetilde{c})^{-1} \le \frac{h_j}{k_i} + \frac{k_i}{l_m} + \frac{l_m}{h_j} \le \widetilde{c}\;\;\forall i,j,l$.
	\end{definition}	
 Define the control volumes $K_{i,j,m} := (x_{i-1/2},x_{i+1/2}) \times (y_{j-1/2},y_{j+1/2}) \times (z_{m-1/2},z_{m+1/2})$, for $0 \le i \le I$, $0 \le j \le J$, and $0 \le m \le L$.

\begin{definition}[three dimensional discrete solution]
Set the discrete initial data as $\alpha_{i,j,m}^{0} := \fint_{K_{i,j,m}} \alpha_0(\boldsymbol{x})\,\mathrm{d}\boldsymbol{x}.$
\begin{subequations}
	\noindent The three dimensional discrete solution at the time step $n+1$, $\alpha_{h}^{n+1} : \widehat{\Omega} \rightarrow \mathbb{R}$, $n \ge 0$ is defined by  $\alpha_{h\vert K_{i,j,m}}^{n+1} = \ats{n+1}{i,j,m}$, where  
	\begin{align}
	\alpha_{i,j,m}^{n+1} ={}& \alpha_{i,j,m}^{n}
	- \mu_{i} \left(\mathrm{F}_{i+1/2,j,m}^{x} - \mathrm{F}_{i-1/2,j,m}^{x}\right) - \lambda_{j} \left(\mathrm{F}_{i,j+1/2,m}^{y} - \mathrm{F}_{i,j-1/2,m}^{y}\right) \nonumber \\
	&- \nu_{m} \left(\mathrm{F}_{i,j,m+1/2}^{z} - \mathrm{F}_{i,j,m-1/2}^{z}\right),
	\label{eqn:3d-fvm_scheme}
	\end{align}
	where $\mu_i = \delta/k_{i}$, $\lambda_{j} = \delta/h_{j}$, $\nu_{m} = \delta/l_{m}$, 
	\begin{align}
	\begin{array}{r l}
	\mathrm{F}_{i-1/2,j,m}^{x} :={}& \left(u_{i-1/2,j,m}^{n\,+}g(\ats{n}{i-1,j,m},\ats{n}{i,j,m}) - u_{i-1/2,j,m}^{n\,-}g(\ats{n}{i,j,m},\ats{n}{i-1,j,m}) \right),\\
	\mathrm{F}_{i,j-1/2,m}^{y} :={}& \left(v_{i,j-1/2,m}^{n\,+}g(\ats{n}{i,j-1,m},\ats{n}{i,j,m}) - v_{i,j-1/2,m}^{n\,-}g(\ats{n}{i,j,m},\ats{n}{i,j-1,m}) \right), \\
	\mathrm{F}_{i,j,m-1/2}^{z} :={}& \left(w_{i,j,m-1/2}^{n\,+}g(\ats{n}{i,j,m-1},\ats{n}{i,j,m}) - w_{i,j,m-1/2}^{n\,-}g(\ats{n}{i,j,m},\ats{n}{i,j,m-1}) \right),
	\end{array} 
	\label{eqn:3d-all_flux}
	\end{align}
	and for $a \in \mathbb{R}$,
	\begin{align}
	u_{i-1/2,j,m}^{n} =& \fint_{t_n}^{t_{n+1}}\fint_{y_{j-1/2}}^{y_{j+1/2}}\fint_{z_{m-1/2}}^{z_{m+1/2}} u(t,x_{i-1/2},s,r)\mathrm{d}r\,\mathrm{d}s\,\mathrm{d}t, \\
	v_{i,j-1/2,m}^{n} =& \fint_{t_n}^{t_{n+1}}\fint_{x_{i-1/2}}^{x_{i+1/2}}\fint_{z_{m-1/2}}^{z_{m+1/2}} v(t,s,y_{j-1/2},r)\mathrm{d}r\,\mathrm{d}s\,\mathrm{d}t,\quad \textmd{ and } \\
	w_{i,j,m-1/2}^{n} =& \fint_{t_n}^{t_{n+1}}\fint_{x_{i-1/2}}^{x_{i+1/2}}\fint_{y_{j-1/2}}^{y_{j+1/2}} w(t,s,r,z_{m-1/2})\mathrm{d}r\,\mathrm{d}s\,\mathrm{d}t.
	\end{align} 
\end{subequations}
\end{definition}
 \noindent Recall the time--reconstruct in Definition~\ref{def:time_rec}. Let $\alpha_{h,\delta} : \widehat{\Omega}_T \rightarrow \mathbb{R}$ be the time--reconstruct corresponding to the family of functions $\{\alpha_h^n\}_{n \ge 0}$. Define the $BV$ in the time--space domain $\widehat{\Omega}_T$ for a function $\beta : \widehat{\Omega}_T \rightarrow \mathbb{R}$ by
	\begin{align}
	|\beta|_{BV_{x,y,z,t}} :={} |\beta|_{L^1_{x}L^1_{y}L^1_{z}BV_t} + |\beta|_{L^1_{t}L^1_{x}L^1_{y}BV_{z}} + |\beta|_{L^1_{t}L^1_{y}L^1_{z}BV_{z}} + |\beta|_{L^1_{t}L^1_{z}L^1_{x}BV_{y}}.
	\end{align}
The next theorem shows that $\alpha_{h,\delta}$ is a function of $BV$. The proof follows analogous to Theorem~\ref{thm:bv-alpha}.
	\begin{subequations}
		\begin{theorem}[bounded variation]
			\label{thm:3d-bv-alpha}
			Let $\mathrm{X}_{k} \times \mathrm{Y}_{h} \times \mathrm{Z}_{l}$ be a three dimensional admissible grid. Assume~\ref{as.1}, \ref{as.2},~\ref{as.3d-3}, and the Courant--Friedrichs--Lewy (CFL) condition $4 \delta \max_{i,j}( \frac{1}{k_{i}} + \frac{1}{h_j} + \frac{1}{\nu_m}) \mathrm{Lip}(g) ||\boldsymbol{u}||_{L^\infty(\Omega_T)} \le \mathscr{C},$
		where $\mathscr{C} > 0$ is a constant. If $\alpha_0 \in L^\infty(\widehat{\Omega})\cap BV_{\boldsymbol{x}}(\widehat{\Omega})$, then $\alpha_{h,\delta}$ satisfies
			$|\alpha_{h,\delta}|_{BV_{x,y,z,t}} \le \widehat{\mathscr{C}}_{\mathrm{BV}},$
			 where $\widehat{\mathscr{C}}_{\mathrm{BV}}$ depends on $T$, $\alpha_0$, $f$, $g$, $||\nabla\boldsymbol{u}||_{L^1_tL^\infty(\widehat{\Omega}_T)}$, and $|\mathrm{div}(\boldsymbol{u})|_{L^1_tBV_{x,y,z}}$.  
		\end{theorem}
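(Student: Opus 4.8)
The plan is to reproduce, dimension by dimension, the architecture of the proof of Theorem~\ref{thm:bv-alpha}: an $L^\infty$ bound on $\alpha_{h,\delta}$, then a spatial $BV$ bound uniform in time, then a temporal $BV$ bound, and finally the assembly of the three through the definition of $|\cdot|_{BV_{x,y,z,t}}$. The starting point in each step is to expand every numerical flux $\mathrm{F}^{x}$, $\mathrm{F}^{y}$, $\mathrm{F}^{z}$ in~\eqref{eqn:3d-fvm_scheme} in the manner of~\eqref{eqn:flux_x}--\eqref{eqn:flux_y}, rewriting the scheme in the convex--combination form of~\eqref{eqn:conv_comb}: $\ats{n+1}{i,j,m}$ is a linear combination of $\ats{n}{i,j,m}$ and its six face--neighbours with nonnegative weights adding up to one, minus the lower--order term $f(\ats{n}{i,j,m})\int_{t_n}^{t_{n+1}}\fint_{K_{i,j,m}}\mathrm{div}(\boldsymbol{u})\,\mathrm{d}\boldsymbol{x}\,\mathrm{d}t$; the CFL condition in the statement is exactly what makes the coefficient of $\ats{n}{i,j,m}$ nonnegative.

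With this form in hand, the $L^\infty$ bound of Proposition~\ref{prop:boundedness} carries over verbatim: bounding the convex combination by its largest entry and using the Lipschitz continuity of $f$ gives a recursion in $n$ which, combined with $\alpha_{i,j,m}^{0}=\fint_{K_{i,j,m}}\alpha_0$, yields $|\alpha_{h,\delta}(t,\cdot)|_{L^\infty(\widehat{\Omega})}\le \mathrm{B}_{f,\boldsymbol{u}}\bigl(\|\alpha_0\|_{L^\infty(\widehat{\Omega})}+f_0\|\mathrm{div}(\boldsymbol{u})\|_{L^1_tL^\infty(\widehat{\Omega}_T)}\bigr)$ for all $0\le t\le T$; write $\alpha_M$ for the right--hand side.

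The spatial step is where the extra dimension does real work. Subtracting the scheme for the $x$--neighbours $K_{i,j,m}$ and $K_{i-1,j,m}$ splits $\ats{n+1}{i,j,m}-\ats{n+1}{i-1,j,m}$, as in~\eqref{eqn:diff_h}, into an $x$--part and two transverse parts, one collecting the $y$--fluxes and one the $z$--fluxes. Each transverse part is a horizontal difference of a transverse difference of numerical fluxes, and to turn the right--hand side into a convex combination of first differences $\ats{n}{l,r,s}-\ats{n}{l',r',s'}$ between neighbouring control volumes one has to regroup it using the three--dimensional counterpart of the intermediate nodal flux of Figure~\ref{fig:nodal_flux}: the intermediate \emph{edge} flux carried by the two control volumes that meet only along a common edge. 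Concretely, the $y$--flux part is regrouped with difference quotients of the $\mathrm{E}_\star$--type in~\eqref{eqn:E-diff} acting in the $y$--direction and, independently, the $z$--flux part with the analogous quotients in the $z$--direction, while the residual terms involving $\partial_x u$, $\partial_y v$ and $\partial_z w$ are collected into a $\mathrm{K}^{f}$-- and a $\mathrm{K}^{g}$--type contribution exactly as in Step~4 of Proposition~\ref{prop:space_bv}. The sum--and--difference identity used there turns the $\mathrm{K}^{f}$ part into a piece controlled by $\mathrm{Lip}(f)\,|\ats{n}{i,j,m}-\ats{n}{i-1,j,m}|\int\|\mathrm{div}(\boldsymbol{u})\|_{L^\infty}$ plus $(\mathrm{Lip}(f)\alpha_M+f_0)$ times a first difference of cell--averages of $\mathrm{div}(\boldsymbol{u})$, while the grid--regularity bound of Definition~\ref{defn:3d-admis_grid} and the Lipschitz continuity of $\boldsymbol{u}$ bound the $\mathrm{K}^{g}$ part by $\mathrm{Lip}(g)$ times transverse first differences of $\alpha_{h,\delta}(t_n,\cdot)$ weighted by $\int\|\nabla\boldsymbol{u}\|_{L^\infty}$. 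Taking absolute values, multiplying by the transverse cell areas, summing over all indices, using $\boldsymbol{u}_{\vert\partial\widehat{\Omega}}=\boldsymbol{0}$ to re--index, and using that the coefficients add up to one, one reaches the three--dimensional analogue of~\eqref{eqn:alpha_sp_bv},
\begin{align*}
|\alpha_{h,\delta}(t_{n+1},\cdot)|_{BV_{x,y,z}} \le{}& |\alpha_{h,\delta}(t_{n},\cdot)|_{BV_{x,y,z}}\Bigl(1+\widehat{\mathscr{C}}\int_{t_n}^{t_{n+1}}\|\nabla\boldsymbol{u}(t,\cdot)\|_{L^\infty(\widehat{\Omega})}\,\mathrm{d}t\Bigr) \\
&+ \widehat{\mathscr{C}}\int_{t_n}^{t_{n+1}}|\Pi_h^0(\mathrm{div}(\boldsymbol{u}))|_{BV_{x,y,z}}\,\mathrm{d}t,
\end{align*}
with $\widehat{\mathscr{C}}$ depending only on $\widetilde{c}$, $\mathrm{Lip}(f)$, $\mathrm{Lip}(g)$, $\alpha_M$ and $f_0$; induction on $n$, together with $|\Pi_h^0(\mathrm{div}(\boldsymbol{u}))|_{BV_{x,y,z}}\le|\mathrm{div}(\boldsymbol{u})|_{BV_{x,y,z}}$, then delivers a uniform--in--time spatial bound $|\alpha_{h,\delta}(t,\cdot)|_{BV_{x,y,z}}\le \mathrm{B}_{\boldsymbol{u}}\bigl(|\alpha_0|_{BV_{x,y,z}}+\widehat{\mathscr{C}}\,|\mathrm{div}(\boldsymbol{u})|_{L^1_tBV_{x,y,z}}\bigr)$.

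For the temporal bound I would mirror Proposition~\ref{prop:temp_bv}: from the convex--combination form, $\ats{n+1}{i,j,m}-\ats{n}{i,j,m}$ is a combination of the six first differences to the face--neighbours with coefficients bounded by $\mathrm{Lip}(g)|\boldsymbol{u}|$, plus the $\mathrm{div}(\boldsymbol{u})$ term; multiplying by the cell volume $k_ih_jl_m$, summing over $n$ and over all cells, using the Lipschitz continuity of $f$ and $g$ and then the spatial bound just obtained gives
\begin{align*}
|\alpha_{h,\delta}|_{L^1_{x,y,z}BV_t} \le{}& 6\,\mathrm{B}_{\boldsymbol{u}}\,\mathrm{Lip}(g)\,\|\nabla\boldsymbol{u}\|_{L^1_tL^\infty(\widehat{\Omega}_T)}\bigl(|\alpha_0|_{BV_{x,y,z}}+\widehat{\mathscr{C}}\,|\mathrm{div}(\boldsymbol{u})|_{L^1_tBV_{x,y,z}}\bigr) \\
&+ (\mathrm{Lip}(f)\alpha_M+f_0)\,\|\mathrm{div}(\boldsymbol{u})\|_{L^1(\widehat{\Omega}_T)}.
\end{align*}
Adding this to the time integral of the spatial bound and invoking the definition of $|\cdot|_{BV_{x,y,z,t}}$ produces the asserted constant $\widehat{\mathscr{C}}_{\mathrm{BV}}$ with the stated dependencies. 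I expect the spatial step to be the main obstacle: compared with two dimensions, the intermediate--edge--flux bookkeeping has to be run in both transverse directions for each of the three coordinate axes, so the number of $\mathrm{E}_\star$--type difference quotients and of sign cases $\star\in\{+,-\}$ to track roughly doubles, and one must verify that after all the regroupings the coefficient of $\ats{n}{i,j,m}-\ats{n}{i-1,j,m}$ together with the six neighbour coefficients still sum to exactly one, so that the CFL condition absorbs the self--coefficient and the induction closes.
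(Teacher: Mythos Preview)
Your proposal is correct and matches the paper's own approach: the paper explicitly omits the proof of Theorem~\ref{thm:3d-bv-alpha}, stating only that it is analogous to Theorem~\ref{thm:bv-alpha} with the extra spatial dimension handled by replacing the intermediate nodal flux with an intermediate edge flux (as announced in the introduction under Contributions). Your outline reproduces exactly this architecture---the $L^\infty$ bound, the spatial $BV$ recursion via edge--flux regrouping in both transverse directions, and the temporal $BV$ bound---and correctly identifies the bookkeeping of the $\mathrm{E}_\star$ terms and the verification that the coefficients sum to one as the main technical burden.
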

	\end{subequations}

	\section{Existence result for a ductal carcinoma model}
	\label{sec:ductal_tumour}
	A crucial application of Theorem~\ref{thm:bv-alpha} is that it enables us to prove the existence of a weak solution to coupled problems involving $\alpha$ and $\boldsymbol{u}$, such as~\eqref{eqn:ductal_tum} and~\eqref{eqn:cellphase}. 
In this section, we apply Theorem~\ref{thm:bv-alpha} to establish the existence of a solution to the \emph{ductal carcinoma in situ} problem~\eqref{eqn:ductal_tum}. The main idea is to combine a finite volume discretisation of~\eqref{eqn:cell_vf} and semi--discrete variational formulation of~\eqref{eqn:pv_sys}, and thereby reduce the interdependence between $\alpha$ and $\boldsymbol{u}$ to a semi--discrete relation $(\alpha^{n+1}_h,\boldsymbol{u}_h^{n+1}) = \boldsymbol{F}(\alpha_h^n, \boldsymbol{u}_h^n)$, where $(\alpha^{n}_h,\boldsymbol{u}_h^{n})$ is the discrete solution at time step $n$ and $h$ is the discretisation factor. Then, an inductive argument is used to show that the time--reconstruct $\alpha_{h,\delta}$, see Definition~\ref{def:time_rec}, constructed from $(\alpha_h^n)_{n \ge 0}$ is a function of $BV$ independent of $h$ and $\delta$. Finally, Helly's selection theorem, see Theorem~\ref{appen_id.c}, is invoked to obtain a convergent subsequence of $\{\alpha_{h,\delta} \}_{h,\delta}$ and the limit function is proved to be a weak solution of~\eqref{eqn:cell_vf}.

	\medskip
\noindent \textbf{Initial and boundary conditions}\medskip\\
 Set $\Omega = (0,1) \times (0,\ell)$ in the sequel. Fix an $\varepsilon$ such that $0 < \varepsilon < (\ell - 1)/2$ and define the auxiliary domain $\Omega(\varepsilon) := (0,1) \times (0,\ell - \varepsilon)$. Recall that for any $A  \subset \mathbb{R}^d$, the set $A_{T}$ is defined by $A_T = (0,T) \times A$.

  The initial concentration of the tumour cells and nutrient  are $\alpha(0,\boldsymbol{x}) = \alpha_0(\boldsymbol{x})$  and $c(0,\boldsymbol{x}) = c_0(\boldsymbol{x})$, respectively.  We assume that $\alpha_{0\vert (0,1) \times (1,\ell)} = 0$, which means the initial tumour occupies only a subset of $(0,1) \times (0,1)$ and later it spreads throughout the duct $\Omega$ as time evolves. In Proposition~\ref{prop:alpha_bounded}, we obtain a time $T_{\ast}$ such that the concentration of tumour cells remains zero for every $(t,x,y) \in (0,T_{\ast}) \times (0,1) \times (\ell - 2\epsilon,1)$. This temporal restriction is imperative as it enables us to obtain a uniform $BV$ estimate on the finite volume solutions from~\eqref{eqn:ductal_fvm}. The boundary conditions on~\eqref{eqn:pv_sys} and~\eqref{eqn:nutr} are as follows:
		\begin{align}
		\label{eqn:bc_1}
		\text{on } x \in \{0,1\}:&\quad \boldsymbol{u}\cdot\boldsymbol{n} = 0,\;\nabla v\cdot \boldsymbol{n} = 0,\;\nabla c\cdot \boldsymbol{n} = 0, \\
		\text{on } y = 0:&\quad \boldsymbol{u}\cdot\boldsymbol{n} = 0,\;\nabla u\cdot \boldsymbol{n} = 0,\;\nabla c\cdot \boldsymbol{n} = 0, \text{ and }\label{eqn:bc_2}
		\\
		\text{on } y = \ell:&\quad \boldsymbol{u}\cdot\boldsymbol{\tau} = 0,\;\nabla v\cdot \boldsymbol{n} =  0,\;\nabla u\cdot \boldsymbol{\tau} = \gamma,\;c = 0, p = 0,
		\label{eqn:bc_3}
		\end{align}
	where $\boldsymbol{\tau}$ and $\boldsymbol{n}$ are the unit tangent and unit normal vectors to $\partial \Omega$, respectively. The boundary condition $c = 0$ at $y = 0$ used in~\cite{Franks2003424} is replaced by  $\nabla c \cdot \boldsymbol{n} = 0$ in~\eqref{eqn:bc_2} and this 
	indicates that nutrient cannot enter or leave the interior of duct through the duct wall at $y = 0$.
	A supplementary condition $\nabla u \cdot \boldsymbol{\tau} = \gamma$ is addd in~\eqref{eqn:bc_3} and this
	manifests from~\eqref{eqn:nutr} and the boundary condition $c = 0$ at $y = \ell$. 
These changes are  reasonable from the modelling perspective as well and aid in obtaining the minimal regularity on $\boldsymbol{u}$ and $c$ that guarantees the convergence of discrete solutions.
	
	 The Sobolev spaces $W^{m,p}(\Omega)$, $H^m(\Omega) := W^{m,2}(\Omega)$, and $L^p(\Omega)$, where $1 \le p \le \infty$, are defined in the standard way. Set the product spaces $\boldsymbol{W}^{m,p}(\Omega) :={} W^{m,p}(\Omega) \times W^{m,p}(\Omega)$ and $\boldsymbol{H}^m(\Omega) :={} H^m(\Omega) \times H^m(\Omega)$.  For $\boldsymbol{u}= (u_1,\ldots,u_d) \in \Pi_{i=1}^d W^{m,p}(\Omega)$, $d \in \{1,2\}$, define the norm $\|\boldsymbol{u} \|_{m,p,\Omega} := \sum_{i=1}^d\sum_{|\boldsymbol{\beta}| \le m} \| \partial^{\boldsymbol{\beta}} u_i\|_{L^p(\Omega)}$, where $\boldsymbol{\beta} \in \mathbb{N}^d$ is a multi--index.   Let $\mathrm{X}_\mathrm{loc}(\Omega) := \{v \in L^2(\Omega):\;v_{\vert \omega} \in \mathrm{X}(\omega)\;\;\forall\,\omega  \subset\subset \Omega \}$, where $\mathrm{X} = H^m$ or $\mathrm{X} = \boldsymbol{H}^m$. Define the Hilbert spaces $\boldsymbol{H}$ and $V$ by
	\begin{align}
	\boldsymbol{H} :={}& \left\{\begin{array}{c|r}
	\boldsymbol{u} := (u,v) \in \boldsymbol{H}^1(\Omega) & \begin{minipage}{5.2cm}
	$\begin{aligned}
	\boldsymbol{u}\cdot\boldsymbol{n} ={}& 0\text{ at } x = 0,\,x = 1,\, y = 0, \\
	\text{ and }&{}\boldsymbol{u}\cdot\boldsymbol{\tau} = 0\text{ at } y = \ell
	\end{aligned}$
	\end{minipage}
	\end{array} \right\}\;\;\text{ and }\\
	V :={}& \{ v \in H^1(\Omega)\;:\; v = 0\text{ at } y = \ell \}.
	\label{eqn:cspace}
	\end{align}
	 For ease of notations, the explicit dependence of variables $(\alpha,\boldsymbol{u},p,c)$ on time is skipped. For instance, in~\eqref{eqn:u_weak}, $\boldsymbol{u}$ stands for $\boldsymbol{u}(t,\cdot)$. 
	\begin{definition}[weak solution]
		\label{def:weak_sol}
		A weak solution of the problem~\eqref{eqn:cell_vf}--\eqref{eqn:nutr} is a four tuple $(\alpha,\boldsymbol{u},p,c)$ such that the following conditions hold:
		\begin{enumerate}
			\item For $\nabla_{t,\boldsymbol{x}} = (\partial_t, \nabla )$,  the tumour cell concentration $\alpha \in L^\infty(\Omega_T)$ is such that, for every $\vartheta \in \mathscr{C}_{c}^{\infty}([0,T) \times \Omega)$,
			\begin{align}
			\int_{\Omega_T}\,\left((\alpha,\boldsymbol{u}\alpha)\cdot \nabla_{t,\boldsymbol{x}}\vartheta + \gamma \alpha(1 - c) \vartheta \right)\,\mathrm{d}\boldsymbol{x}\,\mathrm{d}t  + \int_{\Omega} \alpha_0(\boldsymbol{x}) \vartheta(0,\boldsymbol{x})\,\mathrm{d}\boldsymbol{x} = 0.
			\label{eqn:fvm_weak}
			\end{align}
			\item  The velocity--pressure system is such that $\boldsymbol{u} \in L^2(0,T;\boldsymbol{H})$, $p \in L^2(0,T;L^2(\Omega))$, and for every $\boldsymbol{\psi} := (\psi_1,\psi_2) \in L^2(0,T;\boldsymbol{H})$, $w \in L^2(0,T;L^2(\Omega))$,
			\begin{align}
			\int_{0}^T \mu\,\mathrm{a}(\boldsymbol{u},\boldsymbol{\psi})\,\mathrm{d}\boldsymbol{x}  - \int_{\Omega_T} p\,\mathrm{div}(\boldsymbol{\psi})\,\mathrm{d}\boldsymbol{x}\,\mathrm{d}t  ={}& \int_{0}^T\int_{y = \ell} \dfrac{\gamma \mu}{3} \psi_2\,\mathrm{d}s\,\mathrm{d}t, \;\;\text{ and } \label{eqn:u_weak} 
			\\
			\int_{\Omega_T} \mathrm{div}(\boldsymbol{u})\,w\,\mathrm{d}\boldsymbol{x}\,\mathrm{d}t   ={}& \int_{\Omega_T} \gamma (1 - c)\,w\,\mathrm{d}\boldsymbol{x}\,\mathrm{d}t,
			\label{eqn:p_weak}
			\end{align}
				where 
			$\mathrm{a}(\boldsymbol{v},\boldsymbol{w}) :={} \int_{\Omega} ( \nabla \boldsymbol{v}:\nabla \boldsymbol{w} + \dfrac{1}{3}\mathrm{div}(\boldsymbol{v})\mathrm{div}(\boldsymbol{w}) )\,\mathrm{d}\boldsymbol{x}$ for $\boldsymbol{v},\boldsymbol{w} \in \boldsymbol{H}^1(\Omega)$.
			\item The variable $c \in L^2(0,T;V)$ satisfies, for every $\varphi \in L^2(0,T;V)$ 
			\begin{align}
			\int_{0}^T \int_{\Omega} \nabla c\cdot \nabla  \varphi\,\mathrm{d}\boldsymbol{x}\,\mathrm{d}t = \int_{0}^T \int_{\Omega}  Q\alpha \varphi  \,\mathrm{d}\boldsymbol{x}\,\mathrm{d}t.
			\label{eqn:c_weak}
			\end{align}
		\end{enumerate}
	\end{definition}
	We define a semi--discrete scheme for~\eqref{eqn:cell_vf}--\eqref{eqn:nutr}, wherein the tumour cell concentration is discretised using a finite volume method, and velocity--pressure and nutrient concentration are obtained from the corresponding weak formulations and boundary conditions~\eqref{eqn:bc_1}--\eqref{eqn:bc_3}.
	
	\medskip
\noindent 	\textbf{Semi--discrete scheme:}\;\;\emph{
		Let $\mathrm{X}_{h} \times \mathrm{Y}_{h}$ be a uniform grid on $\Omega({\varepsilon})$ with $h < \varepsilon$ and $0 = t_0 < \cdots < T_{N} = T$ be a uniform temporal discretisation with $\delta = t_{n + 1} - t_{n}$. Set $\mu = \delta/h$. Construct a finite sequence of functions $(\alpha_h^n,\boldsymbol{u}_h^n,p_h^n,c_h^n)_{\{0 \le n < N \}}$ on $\Omega$ as follows. For $n = 0$, define $\alpha_h^0 : \Omega \rightarrow \mathbb{R}$ by $\alpha_h^0 := \alpha_{i,j}^0$, where $\alpha_{i,j}^0 := \fint_{K_{i,j}} \alpha_0(\boldsymbol{x})\,\mathrm{d}\boldsymbol{x}.$  For $0 \le n < N$, define the iterates as follows.
		\begin{enumerate}
			\item  The function $c_h^n \in V$ is defined by, for every $\varphi \in V$ it holds
			\begin{align}
			\int_{\Omega} (\nabla c_h^n \cdot \nabla \varphi - Q \alpha_h^n \varphi ) \,\mathrm{d}\boldsymbol{x} = 0.
			\label{eqn:sd_oxyg}
			\end{align}
			\item  The functions $(\boldsymbol{u}_h^n,p_h^n) \in \boldsymbol{H} \times L^2(\Omega)$ is defined by, for every $(\boldsymbol{\varphi},q) \in \boldsymbol{H} \times L^2(\Omega)$, setting $\boldsymbol{\varphi} = (\varphi_1,\varphi_2)$  it holds
				\begin{align}
			\mu\,\mathrm{a}(\boldsymbol{u}_h^n,\boldsymbol{\varphi}) - \int_{\Omega} p_h^n\,\mathrm{div}(\boldsymbol{\varphi})\,\mathrm{d}\boldsymbol{x} ={}& \int_{y = \ell} \dfrac{\gamma \mu}{3} \varphi_2\,\mathrm{d}s,\;\;\text{ and } \label{eqn:wf_cv}\\
			\int_{\Omega} \mathrm{div}(\boldsymbol{u}_h^n)q\,\mathrm{d}\boldsymbol{x} ={}& \int_{\Omega} \gamma (1 - c_h^n) \,q\,\mathrm{d}\boldsymbol{x}.
			\label{eqn:wf_p}
			\end{align}
			\item   Define $\alpha_h^{n+1}$ as the trivial extension of $\widehat{\alpha}_h^{n+1} : \Omega(\varepsilon) \rightarrow \mathbb{R}$, where $\widehat{\alpha}_h^{n+1} := \widehat{\alpha}_{i,j}^{n}$ on $K_{i,j} = (x_{i-1/2},x_{i+1/2}) \times (y_{j-1/2},y_{j+1/2})$ is obtained by
			\begin{small}
			\begin{gather}
		\hspace{-0.7cm}	\wats{n+1}{i,j} ={} \wats{n}{i,j} - \mu\left[(\widehat{\mathrm{F}}_{i+1/2,j} - \widehat{\mathrm{F}}_{i-1/2,j}) - (\widehat{\mathrm{G}}_{i,j+1/2} + \widehat{\mathrm{G}}_{i,j-1/2})\right] +  \gamma \delta \fint_{K_{i,j}}\wats{n}{i,j} (1 - c_h^n)\,\mathrm{d}\boldsymbol{x},
			\label{eqn:ductal_fvm}
			\end{gather}
			\end{small}
			where 
			\begin{align}
			\widehat{\mathrm{F}}_{i-1/2,j}^n :={} u_{i-1/2,j}^{n\,+}\ats{n}{i-1,j} - u_{i-1/2,j}^{n\,-}\ats{n}{i,j},\;\;
			\widehat{\mathrm{G}}_{i,j-1/2}^n :={} v_{i,j-1/2}^{n+}\ats{n}{i,j-1} - v_{i,j-1/2}^{n-}\ats{n}{i,j},
			\end{align}
			\begin{align}
			u_{i-1/2,j}^n = \fint_{t_{n}}^{t_{n+1}}\fint_{y_{j-1/2}}^{y_{j+1/2}} u_h^n(x_{i-1/2},s)\,\mathrm{d}s\,\mathrm{d}t,\;\;\mathrm{ and }\;\;\,v_{i,j-1/2}^n = \fint_{t_{n}}^{t_{n+1}} \fint_{x_{i-1/2}}^{x_{i+1/2}} v_h^n(s,y_{j-1/2})\,\mathrm{d}s\,\mathrm{d}t.
			\end{align}				
		\end{enumerate}}

%	
%	\begin{remark}[Dependencies of $T_{\ast}$]
%		The existence of $T_{\ast}$ is obtained from Proposition~\ref{prop:alpha_bounded}. It can be observed that $T_{\ast}$ depends on $\alpha_M$, $\varepsilon$, and the model parameters such as $\gamma$ and $Q$.
%	\end{remark}

	\subsection{Compactness}
	\label{sec:compact}
	
	The functions $\alpha_{h,\delta},\,\boldsymbol{u}_{h,\delta},\,p_{h,\delta},$ and $c_{h,\delta}$ are the time--reconstructs, see Definition~\ref{def:time_rec}, corresponding to the family of functions $(\alpha_h^n)_{\{n\ge 0\}}$, $(\boldsymbol{u}_h^n)_{\{n\ge 0\}}$, $(p_h^n)_{\{n\ge 0\}}$, and $(c_h^n)_{\{n\ge 0\}}$, respectively.

	\begin{theorem}[Compactness]
		\label{thm:compact}
		Fix a positive number $\alpha_M > a^0 = \sup_{\Omega}|\alpha_0|$. Assume that $\alpha_{0\vert (0,1) \times(1,\ell)} = 0$ and the following property on the discretisation factors $\delta$ and $h$:
		\begin{equation}
		\mathscr{C}_{\mathrm{ICFL}}^{\varepsilon} \le \dfrac{\delta}{h} \le \gamma\,\mathscr{C}_{\varepsilon}\,(1  + Q \mathscr{C} \sqrt{2\ell}\alpha_M).
		\label{eqn:chd_ductal}
		\end{equation}
		where the constants $\mathscr{C} > 0$ and $\mathscr{C}_{\varepsilon} > 0$ are specified in Lemma~\ref{lemma:c_apr} and Lemma~\ref{lemma:uv_reg}, respectively. Here, $\mathscr{C}_{\varepsilon}$ depends on $\varepsilon$.	The inverse CFL constant $0 < \mathscr{C}_{\mathrm{ICFL}}^{\varepsilon} < \gamma\,\mathscr{C}_{\varepsilon}\,(1  + Q \mathscr{C} \sqrt{2\ell}\alpha_M)$ depends on $\varepsilon$ but is independent of $h$ and $\delta$. Then, there exists a finite time $T_{\ast} < \infty$, a subsequence -- denoted with the same indices -- of the family of functions $\{(\alpha_{h,\delta},\boldsymbol{u}_{h,\delta},p_{h,\delta},c_{h,\delta})\}_{h,\delta}$ obtained from the semi--discrete scheme, and a four tuple of functions $(\alpha,\boldsymbol{u},p,c)$ such that 
		\begin{align}
		\alpha \in BV(\Omega_{T_\ast}),\,\boldsymbol{u} \in L^2(0,T_\ast;\boldsymbol{H}),\, p \in L^2(0,T_\ast;L^2(\Omega)),\,c\in L^2(0,T_\ast;V)
		\end{align}
		and as $h,\delta \rightarrow 0$
		\begin{itemize}
			\item $\alpha_{h,\delta} \rightarrow \alpha$ almost everywhere and in $L^\infty$ weak$-{\star}$ on $\Omega_{T_\ast}$, $\boldsymbol{u}_{h,\delta} \halfarrow \boldsymbol{u}$ weakly in $L^2(0,T_\ast;\boldsymbol{H})$,
			\item $p_{h,\delta} \halfarrow p$ weakly in $L^2(0,T_\ast;L^2(\Omega))$, and  $c_{h,\delta} \halfarrow c$ weakly in $L^2(0,T_\ast;V)$.
		\end{itemize}
	\end{theorem}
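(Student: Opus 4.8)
The plan is to build the compactness statement in four stages, following the architecture already laid out in the introduction to this section. First I would establish the \emph{a priori} stability estimates for the auxiliary variables: from the Lax--Milgram/Babu\v{s}ka--Brezzi theory applied to \eqref{eqn:sd_oxyg} and to \eqref{eqn:wf_cv}--\eqref{eqn:wf_p} (the forms $\mathrm{a}(\cdot,\cdot)$ are coercive on $\boldsymbol{H}$ modulo the usual Korn/Poincar\'e argument, the divergence constraint is inf--sup stable), one obtains bounds $\|c_h^n\|_{V}\le C\|\alpha_h^n\|_{L^2(\Omega)}$ and $\|\boldsymbol{u}_h^n\|_{\boldsymbol{H}}+\|p_h^n\|_{L^2(\Omega)}\le C(1+\|c_h^n\|_{L^2(\Omega)})$, with constants depending only on $\mu,\gamma,Q,\Omega$. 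These are the constants $\mathscr{C}$ and $\mathscr{C}_\varepsilon$ referenced in \eqref{eqn:chd_ductal}; in particular the interior elliptic regularity $\boldsymbol{u}_h^n\in\boldsymbol{H}^2_{\mathrm{loc}}$, $c_h^n\in H^2_{\mathrm{loc}}$ gives the $W^{1,\infty}$ control of $\boldsymbol{u}_h^n$ on the compactly contained subdomain $\Omega(\varepsilon)$ that is needed to invoke the $BV$ machinery — this is where the $\varepsilon$--dependence of $\mathscr{C}_\varepsilon$ and of $\mathscr{C}_{\mathrm{ICFL}}^\varepsilon$ enters.

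Second, I would run the inductive bootstrap on $n$. Using Proposition~\ref{prop:alpha_bounded} (the $L^\infty$ bound, with the choice of $\alpha_M>a^0$) one controls $\|\alpha_h^n\|_{L^\infty}\le\alpha_M$ up to a time $T_\ast$ for which the tumour support stays away from the truncated boundary $y=\ell-\varepsilon$ — this is exactly the finite--time restriction advertised before the statement, and it is what makes the finite volume step \eqref{eqn:ductal_fvm} a genuine instance of the scheme analysed in Section~\ref{sec:presentation} (trivial extension by zero, homogeneous normal velocity). The lower bound $\delta/h\ge\mathscr{C}_{\mathrm{ICFL}}^\varepsilon$ in \eqref{eqn:chd_ductal} is the CFL condition of Corollary~\ref{cor:source}/Theorem~\ref{thm:bv-alpha} rewritten for the present velocity field, whose $L^\infty$ norm is now bounded by the elliptic estimates; the upper bound keeps $\mu=\delta/h$ within the range where the source term $\gamma\alpha(1-c_h^n)$, seen as $\mathfrak{S}$ in assumption~\ref{sa}, has controlled Lipschitz and $BV$ seminorms (here $|\mathrm{div}(\boldsymbol{u}_h^n)|_{BV_{x,y}}$ is dominated by $\|c_h^n\|_{H^2_{\mathrm{loc}}}$, hence by $\|\alpha_h^n\|_{L^2}\le\sqrt{2\ell}\,\alpha_M$, explaining the factor $Q\mathscr{C}\sqrt{2\ell}\alpha_M$). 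With these ingredients Corollary~\ref{cor:source} applies at each step and, after the induction closes, yields $|\alpha_{h,\delta}|_{BV_{x,y,t}(\Omega_{T_\ast})}\le\mathscr{C}_{\mathrm{BV}}$ with $\mathscr{C}_{\mathrm{BV}}$ independent of $h$ and $\delta$.

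Third comes the extraction of limits. The uniform $BV_{x,y,t}$ bound plus the uniform $L^\infty$ bound feed Helly's selection theorem (Theorem~\ref{appen_id.c}): along a subsequence $\alpha_{h,\delta}\to\alpha$ in $L^1(\Omega_{T_\ast})$ and a.e., with $\alpha\in BV(\Omega_{T_\ast})\cap L^\infty$, and the a.e.\ bound $|\alpha|\le\alpha_M$ gives $L^\infty$ weak-$\star$ convergence as well. The bounds on $(\boldsymbol{u}_h^n,p_h^n,c_h^n)$ are uniform in $n$, hence the time--reconstructs are bounded in $L^2(0,T_\ast;\boldsymbol{H})$, $L^2(0,T_\ast;L^2(\Omega))$, $L^2(0,T_\ast;V)$ respectively, and reflexivity of these Hilbert spaces delivers the weakly convergent subsequences $\boldsymbol{u}_{h,\delta}\rightharpoonup\boldsymbol{u}$, $p_{h,\delta}\rightharpoonup p$, $c_{h,\delta}\rightharpoonup c$; a diagonal argument reconciles all four subsequences. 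This yields precisely the conclusions listed in the theorem, so strictly speaking the proof of this \emph{compactness} statement ends here (passing to the limit in the weak formulations to identify $(\alpha,\boldsymbol{u},p,c)$ as a weak solution is the separate content of Theorem~\ref{thm:convergence}).

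The main obstacle is the circularity between the regularity of $\boldsymbol{u}_h^n$ and the $BV$ estimate on $\alpha_h^n$: the $BV$ bound from Corollary~\ref{cor:source} requires $\|\nabla\boldsymbol{u}_h^n\|_{L^\infty}$ and $|\mathrm{div}(\boldsymbol{u}_h^n)|_{BV_{x,y}}$, but $\boldsymbol{u}_h^n$ is only controlled in terms of $c_h^n$, which in turn is controlled by $\|\alpha_h^n\|_{L^2}$ — and a priori the elliptic estimates are only \emph{interior}, so one must be careful that the relevant norms are taken on $\Omega(\varepsilon)$ (equivalently, on a slightly larger compactly contained set), exploit the fact that $\alpha_h^n$ vanishes near $y=\ell-\varepsilon$ on $[0,T_\ast]$, and verify that the constant bounding $\|\nabla\boldsymbol{u}_h^n\|_{L^1_tL^\infty}$ over $[0,T_\ast]$ is independent of the discretisation. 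Keeping track of the $\varepsilon$--dependence while ensuring all constants stay $h$- and $\delta$-independent, and choosing $T_\ast$ so that the support condition is propagated by the scheme, is the delicate bookkeeping that the two-sided condition \eqref{eqn:chd_ductal} is designed to make work.
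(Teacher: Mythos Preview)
Your overall architecture matches the paper's: elliptic estimates for $c_h^n$ and $(\boldsymbol{u}_h^n,p_h^n)$, then the $L^\infty$ bound on $\alpha_{h,\delta}$ (Proposition~\ref{prop:alpha_bounded}), then the $BV$ bound via Corollary~\ref{cor:source}, and finally Helly plus reflexivity. However, there is one genuine technical gap and one misreading of the hypothesis~\eqref{eqn:chd_ductal}.

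\textbf{The regularity gap.} You claim that interior elliptic regularity $\boldsymbol{u}_h^n\in\boldsymbol{H}^2_{\mathrm{loc}}$ ``gives the $W^{1,\infty}$ control of $\boldsymbol{u}_h^n$ on $\Omega(\varepsilon)$.'' In two dimensions this embedding fails: $H^2(\mathbb{R}^2)$ is the critical case and does \emph{not} embed into $W^{1,\infty}$. The paper needs $\boldsymbol{H}^3$ regularity (Lemma~\ref{lemma:uv_reg}) to invoke $H^3\hookrightarrow W^{1,\infty}$, and this in turn requires $c_h^n\in H^2$ (not just $H^1$) as the datum for the Stokes system, since $\mathrm{div}(\boldsymbol{u}_h^n)=\gamma(1-c_h^n)$. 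To reach these regularity levels up to the physical boundaries $x\in\{0,1\}$ and $y=0$, the paper does not rely on interior estimates alone: it performs even/odd reflections of $c_h^n$, $\boldsymbol{u}_h^n$, $p_h^n$ across those boundary pieces into the enlarged rectangle $\Omega_{\mathrm{ext}}=(-1,2)\times(-\ell,\ell)$, so that $\Omega(\varepsilon)$ becomes compactly contained and interior regularity on $\Omega_{\mathrm{ext}}$ yields the needed bounds on $\Omega(\varepsilon)$. Your plan omits this reflection device entirely; without it the interior estimates do not cover $\Omega(\varepsilon)$, and with only $H^2$ they would be insufficient anyway.

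\textbf{The role of the two-sided bound.} You have the roles of the two inequalities in~\eqref{eqn:chd_ductal} reversed. The \emph{upper} bound on $\delta/h$ is the CFL condition of Theorem~\ref{thm:bv-alpha}/Corollary~\ref{cor:source} (the bound on $\|\boldsymbol{u}_h^n\|_{L^\infty(\Omega(\varepsilon))}$ from~\eqref{eqn:embed_u} is exactly $\gamma\mathscr{C}_\varepsilon(1+Q\mathscr{C}\sqrt{2\ell}\alpha_M)$, and CFL demands $\delta/h$ times this be bounded). The \emph{lower} bound $\delta/h\ge\mathscr{C}_{\mathrm{ICFL}}^\varepsilon$ is there for the finite-speed-of-propagation argument in Step~3 of Proposition~\ref{prop:alpha_bounded}: the discrete support advances by at most $h$ per time step, so after time $t$ it has moved at most $t\,h/\delta$; a lower bound on $\delta/h$ ensures this distance stays below $\ell-2\varepsilon-1$ for all $t\le T_2$ with $T_2$ independent of $h,\delta$. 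This is what keeps $\alpha_{h,\delta}$ zero near $y=\ell-\varepsilon$ and lets the trivial extension work. The source term's Lipschitz and $BV$ seminorms do not depend on $\delta/h$, so your explanation of the upper bound is not the right mechanism.
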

	\begin{remark}[Necessity of strong $BV$ estimate on $\alpha_{h,\delta}$]
		The uniform boundedness on $\alpha_{h,\delta}$ directly yield a subsequence that converges in weak--$\ast$ topology. However, this is not sufficient to show that the second term in the right hand side of~\eqref{eqn:ductal_fvm} converges weakly. It is shown that $c_{h,\delta}$ converges weakly in $L^2(0,T_{\ast};H^1(\Omega))$. Therefore, to establish  $\alpha_{h,\delta}(1 - c_{h,\delta})$ converges weakly to $\alpha(1 - c)$, the strong convergence of $\alpha_{h,\delta}$ is required. We employ Theorem~\ref{appen_id.c} to extract  a subsequence of $\{\alpha_{h,\delta}\}$   that converges almost everywhere and in $L^1(\Omega_{T_{\ast}})$ for which a strong uniform $BV$ estimate is necessary. 
	\end{remark}
\noindent The proof of Theorem~\ref{thm:compact} is achieved over multiple.  We establish:
	\begin{enumerate}[label= \Large${\bullet}$,ref=$\mathrm{(C.\arabic*)}$,leftmargin=\widthof{(C.4)}+3\labelsep]
		\item in Lemma~\ref{lemma:c_apr}, $c_{h}^n$  has $W^{2,p}(\Omega)$ regularity, which yields $\|c_{h}\|_{1,\infty,\Omega}$ estimate, 
		\item in Lemma~\ref{lemma:uv_reg},  $\boldsymbol{u}_h^n$ has $\boldsymbol{H}_{\mathrm{loc}}^3(\Omega)$ regularity, which yields local $\|\boldsymbol{u}_h^n\|_{1,\infty,\Omega}$ estimate,
		\item in Proposition~\ref{prop:alpha_bounded}, the finite volume solution $\alpha_{h,\delta}$ is  bounded, and
		\item in Proposition~\ref{prop:bv_ductal}, Corollary~\ref{cor:source} and the above steps are employed to prove that $\alpha_{h,\delta}$ is a function with $BV$. 
	\end{enumerate}
	Define the extended functions $\overline{c}_h^n$, $\overline{\boldsymbol{u}}_h^n := (\overline{u}_h^n,\overline{v}_h^n)$, and $\overline{p}_h^n$ on $\Omega_{\mathrm{ext}} := (-1,2) \times (-\ell,\ell)$ using even and odd reflections as follows. Let $a \in \{0,1,2\}$ and $b \in \{0,\ell\}$. Then, on $(a-1,a)\times(b-\ell,b)$ set $(\widetilde{x},\widetilde{y}) := (x(-2a^2 + 4a - 1) + (a^2 - a), (2b - \ell)y/\ell)$ and define
	\begin{gather}
	\left.
	\begin{array}{c}
	\overline{\alpha}_h^n (x,y) := \alpha_h^n(\widetilde{x},\widetilde{y}),\;\;\overline{c}_h^n (x,y) := c_h^n(\widetilde{x},\widetilde{y}),\;\;\overline{p}_h^n (x,y) := p_h^n(\widetilde{x},\widetilde{y}), \text{ and } \\
	\overline{u}_h^n (x,y) := (-2a^2 + 4a  -1)u_h^n(\widetilde{x},\widetilde{y}),\;\overline{v}_h^n (x,y) := (2b/\ell - 1)v_h^n(\widetilde{x},\widetilde{y}).
	\end{array} 
	\right\}
	\label{eqn:refl}
	\end{gather}
	In~\eqref{eqn:refl}, we have a compact representation of all reflections employed to construct the extended functions. A pictorial representation of~\eqref{eqn:refl} is provided in Figure~\ref{fig:reflections} for clarity.  We introduced three spatial domains so far and relations between them are represented in Figure~\ref{fig:domains}.
	\begin{figure}[htp]
		\centering
		\includegraphics[scale=0.8]{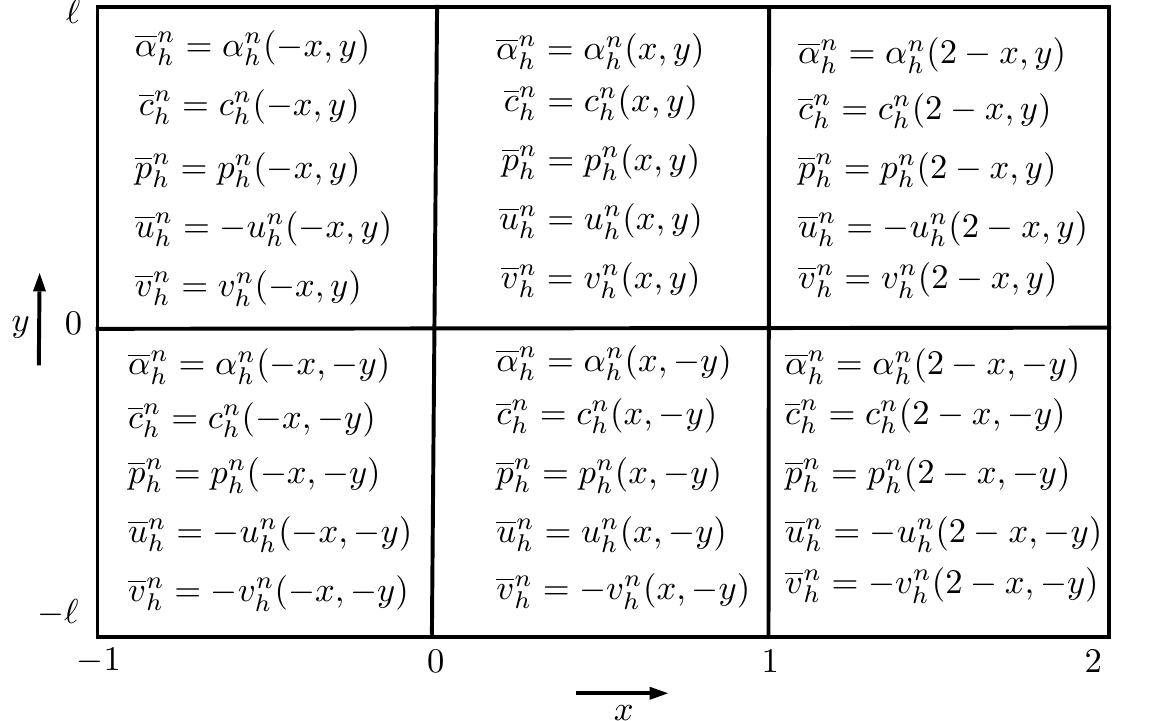}
		\caption{Extended functions on the rectangle $(-1,2) \times (-\ell,\ell)$}
		\label{fig:reflections}	
	\end{figure}
	
		\begin{remark}[auxiliary domain $\Omega(\varepsilon)$]
			The internal regularity result, see Theorem~\ref{thm:uv_reg}, only grants $\boldsymbol{u}_h^n \in \boldsymbol{H}^3(\Omega({\varepsilon}))$. The discontinuity in normal gradient of even reflection of $c$ about $y = \ell$ disables extending this local regularity of $\boldsymbol{u}_h^n$ up to $y = \ell$. As a result, it is  necessary to keep $\Omega({\varepsilon})$ to have enough regularity of $\boldsymbol{u}_h^n$ to move the analysis forward. We use the Sobolev embedding theorem to obtain $\boldsymbol{u}_h^n \in \boldsymbol{H}^{3}(\Omega({\varepsilon})) \hookrightarrow \boldsymbol{W}^{1,\infty}(\Omega({\varepsilon}))$, from which a $BV$ estimate on $\alpha_{h,\delta \vert \Omega({\varepsilon})}$, see Corollary~\ref{cor:source}, is derived. By imposing a restriction on time, the $BV$ regularity of $\alpha_{h,\delta}$ is extended to $\Omega$. 
	\end{remark}
	\begin{lemma}
		\label{lemma:c_apr}
		For every $n \ge 0$,~\eqref{eqn:sd_oxyg} has a unique solution $c_h^n \in V$. Moreover, it holds $\overline{c}_h^n \in H^{2}_{\mathrm{loc}}(\Omega_{\mathrm{ext}})$, $c_{h}^n \in W^{2,p}(\Omega)$ for any $p \ge 2$, and $\|c_h^n \|_{2,p,\Omega} \le  \mathscr{C}Q(2\ell)^{1/p}\|\alpha_h^n\|_{0,\infty,\Omega}$, where $\mathscr{C} > 0$ is a constant that depends only on $\Omega$.
	\end{lemma}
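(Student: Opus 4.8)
The plan is to treat \eqref{eqn:sd_oxyg} as a standard variational Neumann--Dirichlet problem for the Poisson equation $-\Delta c_h^n = Q\alpha_h^n$ on $\Omega = (0,1)\times(0,\ell)$ with $c_h^n = 0$ on $y = \ell$ and homogeneous Neumann data on the remaining three sides, and then bootstrap to $W^{2,p}$ regularity after reflecting across the three Neumann faces. First I would establish existence and uniqueness: the bilinear form $(\varphi,\psi)\mapsto \int_\Omega \nabla\varphi\cdot\nabla\psi\,\vx$ is coercive on $V$ by the Poincar\'e inequality (the Dirichlet condition on the full edge $y=\ell$ rules out constants), and $\varphi\mapsto \int_\Omega Q\alpha_h^n\varphi\,\vx$ is a bounded linear functional on $V$ since $\alpha_h^n\in L^\infty(\Omega)\subset L^2(\Omega)$; the Lax--Milgram theorem then delivers a unique $c_h^n\in V$.

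Next I would handle the regularity. The obstruction to citing interior elliptic regularity directly up to the boundary is that $\partial\Omega$ carries mixed boundary conditions, so I would remove the Neumann parts by even reflection: reflecting $c_h^n$ evenly across $x=0$, $x=1$, and $y=0$ — exactly the construction \eqref{eqn:refl} restricted to these faces — produces $\overline c_h^n$ on a neighbourhood of $\Omega$ inside $\Omega_{\mathrm{ext}}$ that is a weak solution of $-\Delta \overline c_h^n = Q\overline\alpha_h^n$ across those (former) edges, because even reflection preserves the vanishing of the normal derivative in the weak sense. Hence $\overline c_h^n\in H^2_{\mathrm{loc}}(\Omega_{\mathrm{ext}})$ by interior $H^2$ regularity for the Laplacian with $L^2$ right-hand side, and along the Dirichlet edge $y=\ell$ one uses boundary $H^2$ (and then $L^p$) estimates for the flat Dirichlet problem; a finite covering of $\overline\Omega$ by interior balls, reflected-Neumann neighbourhoods, and Dirichlet-boundary half-balls gives $c_h^n\in W^{2,p}(\Omega)$ for every $p\ge 2$ since the data $Q\alpha_h^n$ lies in $L^p(\Omega)$ for all such $p$.

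Finally, for the quantitative bound I would invoke the Calder\'on--Zygmund $W^{2,p}$ estimate for the Poisson problem on this fixed geometry, which yields a constant $\mathscr C>0$ depending only on $\Omega$ with $\|c_h^n\|_{2,p,\Omega}\le \mathscr C\|Q\alpha_h^n\|_{L^p(\Omega)}$; since $\|Q\alpha_h^n\|_{L^p(\Omega)}\le Q\,|\Omega|^{1/p}\|\alpha_h^n\|_{0,\infty,\Omega}$ and $|\Omega| = \ell$ while the reflected problem lives on a domain of measure $2\ell$ in the $y$-direction, one records $\|c_h^n\|_{2,p,\Omega}\le \mathscr C Q(2\ell)^{1/p}\|\alpha_h^n\|_{0,\infty,\Omega}$, absorbing any geometric constants into $\mathscr C$. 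The main obstacle is bookkeeping the mixed boundary conditions cleanly: one must check that the even reflections across the three Neumann edges are mutually compatible at the corners where two Neumann edges meet (so that $\overline c_h^n$ is genuinely a weak solution in a full neighbourhood of each such corner) and that no spurious regularity is claimed at the two corners $(0,\ell)$ and $(1,\ell)$ where a Neumann edge meets the Dirichlet edge — there $W^{2,p}$ up to the corner still holds for $p$ in the relevant range because the interior angle is $\pi/2$, but this is the point that needs the most care.
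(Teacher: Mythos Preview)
Your proposal is essentially correct and matches the paper for existence/uniqueness (Lax--Milgram on $V$) and for the $H^2_{\mathrm{loc}}(\Omega_{\mathrm{ext}})$ claim (even reflection across the three Neumann edges, then interior regularity for the Laplacian).

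Where you diverge is the $W^{2,p}(\Omega)$ step. You propose a covering argument: interior balls, reflected-Neumann neighbourhoods, and Dirichlet half-balls along $y=\ell$, with a separate check at the two mixed corners $(0,\ell)$ and $(1,\ell)$. The paper instead performs an \emph{odd} reflection of $c_h^n$ (and of the right-hand side $Q\alpha_h^n$) across the Dirichlet edge $y=\ell$, producing $\widehat c_h^n$ on the rectangle $\Lambda=(0,1)\times(0,2\ell)$ which satisfies the weak Poisson equation against all of $H^1(\Lambda)$; then a single application of a global $W^{2,p}$ regularity result for rectangles (Maz'ya's theorem, cited as Theorem~\ref{thm:creg_local}) yields $\widehat c_h^n\in W^{2,p}(\Lambda)$ and hence $c_h^n\in W^{2,p}(\Omega)$. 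This sidesteps entirely the corner issue you flag, since after odd reflection the former Neumann--Dirichlet corners become ordinary Neumann--Neumann corners of $\Lambda$, which the rectangle result handles in one stroke. It also explains the constant $(2\ell)^{1/p}$ transparently: it is exactly $|\Lambda|^{1/p}$ arising from $\|f\|_{L^p(\Lambda)}\le |\Lambda|^{1/p}Q\|\alpha_h^n\|_{0,\infty,\Omega}$, whereas in your version that factor is inserted somewhat artificially. Your route is workable but carries more bookkeeping; the paper's odd-reflection trick is the cleaner shortcut.
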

	\begin{proof}
		An application of Lax--Milgram theorem ensures the existence of a unique $c_h^n \in V$ that satisfies~\eqref{eqn:sd_oxyg}. 
	Observe that $\overline{c}_h^n \in H_{\mathrm{ext}} := \{v \in H^1(\Omega_{\mathrm{ext}}):\; v = 0 \text{ at } y = \ell, -\ell \}$. Apply change of variables to establish  $\int_{\Omega_{\mathrm{ext}}} \nabla \overline{c}_h^n \cdot \nabla v\,\mathrm{d}\boldsymbol{x} = Q \int_{\Omega_{\mathrm{ext}}} \overline{\alpha}_h^n v\,\mathrm{d}\boldsymbol{x}$ for every $v \in H_{\mathrm{ext}}$.
			Therefore,~Theorem~\ref{thm:c_reg} yields $\overline{c}_h^n \in H^2_{\mathrm{loc}}(\Omega_{\mathrm{ext}})$.
		
		 The $W^{2,p}(\Omega)$ regularity of $c_h^n$ is obtained by an application of odd reflection on $c_h^n$ about $y = \ell$. Set $\Lambda := (0,1) \times (0,2\ell)$. Define the function $\hat{c}_h^n : \Lambda \rightarrow \mathbb{R}$ by 
			\begin{align}
			\widehat{c}_h^n :={} \left\{ \begin{array}{c l}
			c_{h}^n(x,y)\;\; &\text{ if }\;\;y \le \ell, \text{ and } \\
			-c_{h}^n(x,2-y)\;\; &\text{ if }\;\;y > \ell.	    
			\end{array}
			\right.
			\end{align} 
			Let $f(x,y) = Q\alpha_h^n(x,y)$ if $y \le \ell$ and $f(x,y) = -Q\alpha_h^n(x,2-y)$ if $y \ge \ell$. Then, note that $\widehat{c}_h^n \in H^1(\Lambda)$ and $\int_{\Lambda} \nabla \widehat{c}_h^n \cdot \nabla v\,\mathrm{d}\boldsymbol{x} =  \int_{\Lambda} f v\,\mathrm{d}\boldsymbol{x}$ holds for every $v \in H^1(\Lambda)$.
			Hence, Theorem~\ref{thm:creg_local} shows that $\widehat{c}_h^n \in W^{2,p}(\Lambda)$, $p \ge 1$ and in particular, $\|c_h^n \|_{2,p,\Omega} \le  \mathscr{C}(2\ell)^{1/p}Q\|\alpha_h^n\|_{0,\infty,\Omega}$.
	\end{proof}
	\begin{figure}[htp]
	\centering
	\includegraphics[scale=0.8]{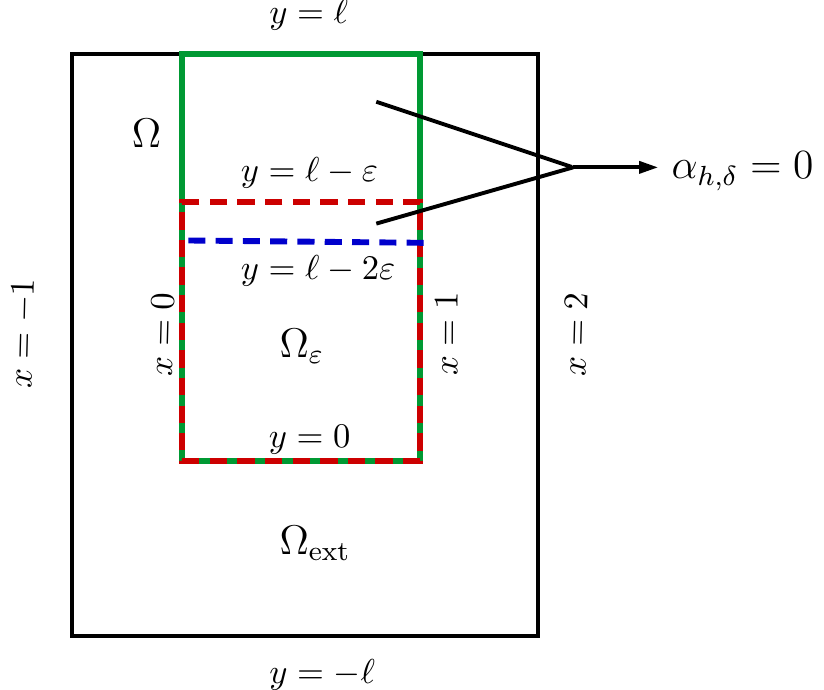}
	\caption{Relationship with domains}
	\label{fig:domains}	
\end{figure}
	\begin{lemma}
				\label{lemma:uv_reg}
		For every $n \ge 0$, there exists a unique $(\boldsymbol{u}_h^n,p_h^n) \in \boldsymbol{H} \times L^2(\Omega)$ that satisfies~\eqref{eqn:wf_cv}--\eqref{eqn:wf_p} for every $(\boldsymbol{\varphi},q) \in \boldsymbol{H} \times L^2(\Omega)$. Moreover, it holds $\overline{\boldsymbol{u}}_h^n \in \boldsymbol{H}^3_{\mathrm{loc}}(\Omega_{\mathrm{ext}})$ and for each $\varepsilon > 0$
		\begin{align}
		\| \boldsymbol{u}_h^n \|_{3,2,\Omega({\epsilon})} \le  \gamma\,\mathscr{C}_{\varepsilon}\,(1  + \mathscr{C}Q \sqrt{2\ell}\|\alpha_h^n\|_{0,\infty,\Omega}),
		\end{align} 
		where $\mathscr{C}_{\varepsilon} > 0$ depends only on $\varepsilon$.
	\end{lemma}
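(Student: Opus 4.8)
The plan is to treat the velocity--pressure system \eqref{eqn:wf_cv}--\eqref{eqn:wf_p} as a generalized Stokes problem and to proceed in three stages: well-posedness, interior (local) regularity via reflection, and the quantitative estimate on the subdomain $\Omega(\varepsilon)$. First I would establish existence and uniqueness of $(\boldsymbol{u}_h^n,p_h^n) \in \boldsymbol{H}\times L^2(\Omega)$. The bilinear form $\mathrm{a}(\cdot,\cdot)$ is coercive on the subspace of $\boldsymbol{H}$ with prescribed divergence once one controls the rigid/tangential boundary modes: the condition $\boldsymbol{u}\cdot\boldsymbol{n}=0$ on three sides and $\boldsymbol{u}\cdot\boldsymbol{\tau}=0$ at $y=\ell$ rules out the kernel of $\nabla$, so a Korn-type / Poincar\'e inequality gives $\mathrm{a}(\boldsymbol{v},\boldsymbol{v}) \gtrsim \|\boldsymbol{v}\|_{1,2,\Omega}^2$. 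The divergence constraint \eqref{eqn:wf_p} is solvable because $\mathrm{div}:\boldsymbol{H}\to L^2(\Omega)$ is surjective (the usual Bogovskii / inf--sup argument on a Lipschitz domain), and the pressure is recovered as the Lagrange multiplier via the inf--sup (LBB) condition for this boundary-condition configuration. This yields a unique solution together with an a priori bound $\|\boldsymbol{u}_h^n\|_{1,2,\Omega} + \|p_h^n\|_{0,2,\Omega} \lesssim \gamma(1 + \|c_h^n\|_{0,2,\Omega})$, and then Lemma~\ref{lemma:c_apr} converts $\|c_h^n\|_{0,2,\Omega}$ into a multiple of $\|\alpha_h^n\|_{0,\infty,\Omega}$.

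Next I would address the interior regularity $\overline{\boldsymbol{u}}_h^n \in \boldsymbol{H}^3_{\mathrm{loc}}(\Omega_{\mathrm{ext}})$. Extend by the reflections \eqref{eqn:refl}; the even/odd reflection rules are chosen precisely so that $\overline{\boldsymbol{u}}_h^n$, $\overline{p}_h^n$ still satisfy a generalized Stokes system on $\Omega_{\mathrm{ext}}$ in the weak sense, with right-hand datum built from $\overline{c}_h^n$. By Lemma~\ref{lemma:c_apr}, $\overline{c}_h^n \in H^2_{\mathrm{loc}}(\Omega_{\mathrm{ext}})$, hence the divergence constraint datum $\gamma(1-\overline{c}_h^n)$ lies in $H^2_{\mathrm{loc}}$. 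Interior elliptic regularity for the Stokes operator with the elliptic operator $-\mu(\Delta + \tfrac13\nabla\mathrm{div})$ (which is elliptic in the sense of Agmon--Douglis--Nirenberg) then bootstraps: from $\boldsymbol{u}\in H^1_{\mathrm{loc}}$, a zero momentum right-hand side and an $H^2_{\mathrm{loc}}$ divergence, one gains two derivatives on $\boldsymbol{u}$ and one on $p$ on each relatively compact subdomain, i.e. $\boldsymbol{u}\in H^3_{\mathrm{loc}}$, $p\in H^2_{\mathrm{loc}}$. I would invoke the standard interior Stokes regularity theorem (cited in the excerpt as Theorem~\ref{thm:uv_reg}) for this step rather than re-deriving it.

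Finally, for the quantitative bound on $\Omega(\varepsilon)$, note that $\overline{\Omega(\varepsilon)}$ is a compact subset of $\Omega_{\mathrm{ext}}$ whose distance to $\partial\Omega_{\mathrm{ext}}$ is bounded below by a constant depending only on $\varepsilon$ (and $\ell$). The interior estimate then reads $\|\boldsymbol{u}_h^n\|_{3,2,\Omega(\varepsilon)} \le \mathscr{C}_\varepsilon\big(\|\boldsymbol{u}_h^n\|_{1,2,\Omega_{\mathrm{ext}}} + \|\gamma(1-\overline{c}_h^n)\|_{2,2,\Omega_{\mathrm{ext}}'} + \text{momentum data}\big)$ on an intermediate collar $\Omega_{\mathrm{ext}}'$; chaining this with the $H^1$ a priori bound from stage one and the $W^{2,p}$ (hence $H^2_{\mathrm{loc}}$) bound on $c_h^n$ from Lemma~\ref{lemma:c_apr}, all constants absorb into a single $\gamma\,\mathscr{C}_\varepsilon(1 + \mathscr{C}Q\sqrt{2\ell}\,\|\alpha_h^n\|_{0,\infty,\Omega})$, which is the claimed inequality. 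I expect the main obstacle to be the reflection bookkeeping across $y=\ell$: the odd reflection of $c$ forces a jump in its normal derivative there, so one must be careful that the Stokes data after reflection is only $H^2_{\mathrm{loc}}$ away from $y=\pm\ell$, which is exactly why the estimate is stated on $\Omega(\varepsilon)$ and cannot be pushed up to $y=\ell$ — verifying that the reflected fields genuinely solve the system in the distributional sense on all of $\Omega_{\mathrm{ext}}$ (compatibility of the boundary conditions \eqref{eqn:bc_1}--\eqref{eqn:bc_3} with the chosen parities) is the delicate point.
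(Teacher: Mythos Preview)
Your proposal is correct and follows essentially the same route as the paper: existence and uniqueness via the LBB/inf--sup theorem, reflection of $(\boldsymbol{u}_h^n,p_h^n,c_h^n)$ to $\Omega_{\mathrm{ext}}$ so that the extended pair satisfies a Stokes system with divergence datum $\gamma(1-\overline{c}_h^n)\in H^2_{\mathrm{loc}}(\Omega_{\mathrm{ext}})$ by Lemma~\ref{lemma:c_apr}, and then the interior Stokes regularity theorem (the paper's Theorem~\ref{thm:uv_reg}) to obtain $\boldsymbol{H}^3_{\mathrm{loc}}$ and the quantitative bound on $\Omega(\varepsilon)$. Your identification of the $y=\ell$ obstruction as the reason the estimate cannot be pushed to the full domain is exactly the point the paper makes in its remark on the auxiliary domain $\Omega(\varepsilon)$.
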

	\begin{proof}
		The existence of a unique solution $(\boldsymbol{u}_h^n,p_h^n) \in \boldsymbol{H} \times L^2(\Omega)$ follows from Ladyshenzkaya--Babuska--Brezzi theorem~\cite[p.~227]{Boffi}. Set the space \begin{align}
		\boldsymbol{H}_{\mathrm{ext}} :={}& \left\{\begin{array}{c|r}
		\boldsymbol{u} := (u,v) \in \boldsymbol{H}^1(\Omega_{\mathrm{ext}}) & \begin{minipage}{5.3cm}
		$\begin{aligned}
		&\boldsymbol{u}\cdot\boldsymbol{n} ={} 0\text{ at } x = -1,\,x = 2, \\
		&\text{ and }{}\boldsymbol{u}\cdot\boldsymbol{\tau} = 0\text{ at } y = \ell, \,y=-\ell
		\end{aligned}$
		\end{minipage}
		\end{array} \right\}.
		\end{align}
		Then, observe that the extended function $(\overline{\boldsymbol{u}}_h^n,\overline{p}_h^n)$ belongs to $\boldsymbol{H}_{\mathrm{ext}} \times L^2(\Omega_{\mathrm{ext}})$ and satisfies for every $(\boldsymbol{\varphi},q) \in  \boldsymbol{H}_{\mathrm{ext}} \times L^2(\Omega_{\mathrm{ext}})$
		\begin{align}
		\mu\,\int_{\Omega_{\mathrm{ext}}} ( \nabla \overline{\boldsymbol{u}}_h^n:\nabla \boldsymbol{\varphi} + \dfrac{1}{3}\mathrm{div}(\overline{\boldsymbol{u}}_h^n)\mathrm{div}(\boldsymbol{\varphi}) )\,\mathrm{d}\boldsymbol{x} - \int_{\Omega_{\mathrm{ext}}} \overline{p}_h^n\,\mathrm{div}(\boldsymbol{\varphi})\,\mathrm{d}\boldsymbol{x} ={}& \int_{y = \ell,-\ell} \dfrac{\gamma \mu}{3} \varphi_2\,\mathrm{d}s,\;\;\text{ and } \\
		\int_{\Omega_{\mathrm{ext}}} \mathrm{div}(\overline{\boldsymbol{u}}_h^n)q\,\mathrm{d}\boldsymbol{x} ={}& \int_{\Omega_{\mathrm{ext}}} \gamma (1 - \overline{c}_h^n) \,q\,\mathrm{d}\boldsymbol{x}. 
		\end{align}
		Since Lemma~\ref{lemma:c_apr} yields $\gamma (1 - \overline{c}_h^n) \in H^2_{\mathrm{loc}} (\Omega_{\mathrm{ext}})$, apply Theorem~\ref{thm:uv_reg} to conclude the proof.
	\end{proof}
	Lemmas~\ref{lemma:c_apr} and~\ref{lemma:uv_reg} are crucial in obtaining the supremum norm estimates on $c_h^n$ and $\mathrm{div}(\boldsymbol{u}_h^n)$ on $\Omega({\varepsilon})$. Since $\overline{c}_h^n \in W^{2,p}(\Omega)$ from Lemma~\ref{lemma:c_apr} and $\boldsymbol{u}_h^n \in \boldsymbol{H}^3(\Omega({\varepsilon}))$, the Sobolev embedding theorem with $p > 2$ yields
	\begin{align}
	\label{eqn:embed_c}
	\|c_h^n \|_{1,\infty,\Omega} \lesssim ||\overline{c}_h^n||_{2,p,\Omega} \le \mathscr{C}Q(2\ell)^{1/p}\|\alpha_h^n\|_{0,\infty,\Omega}, \text{ and } \\
	\label{eqn:embed_u}
	\|\boldsymbol{u}_h^n \|_{1,\infty,\Omega({\varepsilon})} \lesssim \|\overline{\boldsymbol{u}}_{h}^n \|_{3,2,\Omega({\varepsilon})} \le \gamma\,\mathscr{C}_{\varepsilon}\,(1  + \mathscr{C}Q  \sqrt{2\ell}\|\alpha_h^n\|_{0,\infty,\Omega}).
	\end{align}
	
	\begin{proposition}
		\label{prop:alpha_bounded}
		Fix a positive number $\alpha_M > a_0$. There exists a finite time $T_{\ast} > 0$ such that for every $t \le T_{\ast}$,   $\sup_{\Omega}|\alpha_{h,\delta}(t,\cdot)| \le \alpha_{M}$ holds.
	\end{proposition}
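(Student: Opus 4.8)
The plan is to run an induction on the time level $n$, carrying three properties simultaneously for every $n$ with $n\delta\le T_\ast$: (i)~$0\le\widehat\alpha_h^n$ and $\|\widehat\alpha_h^n\|_{L^\infty(\Omega(\varepsilon))}\le a^0<\alpha_M$, where $a^0=\|\alpha_0\|_{L^\infty(\Omega)}$; (ii)~$\mathrm{supp}(\widehat\alpha_h^n)\subset(0,1)\times(0,\ell-2\varepsilon)$; and (iii)~$\|\boldsymbol{u}_h^n\|_{L^\infty(\Omega(\varepsilon))}\le\gamma\mathscr{C}_\varepsilon(1+\mathscr{C}Q\sqrt{2\ell}\,\alpha_M)=:\mathrm{V}_M$. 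The number $T_\ast$, which will depend only on $\varepsilon$, $\ell$ and the constants in~\eqref{eqn:chd_ductal}, is fixed at the end; since $\alpha_{h,\delta}(t,\cdot)$ coincides with the trivial extension of $\widehat\alpha_h^n$ for $t\in[t_n,t_{n+1})$, property~(i) delivers the assertion. The base case $n=0$ is immediate: $|\widehat\alpha_{i,j}^0|\le\fint_{K_{i,j}}|\alpha_0|\le a^0$ by~\eqref{eqn:alpha_ini}, $\widehat\alpha_h^0\ge0$ because $\alpha_0\ge0$, and $\mathrm{supp}(\widehat\alpha_h^0)\subset(0,1)\times(0,1)\subset(0,1)\times(0,\ell-2\varepsilon)$ because $\alpha_{0\vert(0,1)\times(1,\ell)}=0$ and $\varepsilon<(\ell-1)/2$; property~(iii) then follows from Lemma~\ref{lemma:uv_reg} and~\eqref{eqn:embed_u}.

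For the inductive step $n\to n+1$, I would first recast~\eqref{eqn:ductal_fvm} as an affine combination of $\wats{n}{i,j}$ and its four neighbours, in the spirit of the passage from~\eqref{eqn:fvm_scheme} to~\eqref{eqn:conv_comb}. Expanding the upwind fluxes $\widehat{\mathrm{F}}^n$, $\widehat{\mathrm{G}}^n$ (the flux being linear, $f(\alpha)=\alpha$) and collecting the face velocities, the coefficient of $\wats{n}{i,j}$ equals $1-\mathrm{IN}_{i,j}-\delta\gamma\fint_{K_{i,j}}(1-c_h^n)\,\mathrm{d}\boldsymbol{x}$, where $\mathrm{IN}_{i,j}=\mu\bigl(u_{i+1/2,j}^{n\,-}+u_{i-1/2,j}^{n\,+}+v_{i,j+1/2}^{n\,-}+v_{i,j-1/2}^{n\,+}\bigr)\ge0$ is the total numerical inflow into $K_{i,j}$ and the last term comes from the discrete flux balance $\delta\fint_{K_{i,j}}\mathrm{div}(\boldsymbol{u}_h^n)=\delta\gamma\fint_{K_{i,j}}(1-c_h^n)$, obtained from the divergence theorem together with~\eqref{eqn:wf_p} tested with $q=\boldsymbol{1}_{K_{i,j}}$. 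The decisive point is that $\delta\gamma\fint_{K_{i,j}}(1-c_h^n)\,\mathrm{d}\boldsymbol{x}$ is \emph{exactly} the extra source in~\eqref{eqn:ductal_fvm} (since $\wats{n}{i,j}$ is constant on $K_{i,j}$), so the two cancel and the scheme becomes $\wats{n+1}{i,j}=(1-\mathrm{IN}_{i,j})\wats{n}{i,j}+(\text{nonnegative inflow weights})\cdot(\text{neighbour values})$, where the neighbour lying above $y=\ell-\varepsilon$ contributes the extension value $0$, legitimate because~(ii) and $h<\varepsilon$ force the top cell row to vanish. Since $\mathrm{IN}_{i,j}\le4\mu\|\boldsymbol{u}_h^n\|_{L^\infty(\Omega(\varepsilon))}\le4\tfrac{\delta}{h}\mathrm{V}_M$, the upper bound on $\delta/h$ in the standing hypothesis~\eqref{eqn:chd_ductal} furnishes the CFL inequality $\mathrm{IN}_{i,j}\le1$, so the combination is convex. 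Consequently $\wats{n+1}{i,j}$ lies between $\min\{0,\wats{n}{l,m}\}$ and $\max\{0,\wats{n}{l,m}\}$, which re-establishes $0\le\widehat\alpha_h^{n+1}$ and $\|\widehat\alpha_h^{n+1}\|_{L^\infty}\le\|\widehat\alpha_h^n\|_{L^\infty}\le a^0<\alpha_M$; property~(iii) at level $n+1$ then follows once more from Lemma~\ref{lemma:uv_reg} and~\eqref{eqn:embed_u}.

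For~(ii) at level $n+1$ I would use the one-cell numerical domain of dependence of the explicit scheme: $\wats{n+1}{i,j}$ depends only on the values $\wats{n}{l,m}$ with $|l-i|+|m-j|\le1$, so the numerical support can advance by at most one layer of cells per step, giving $\mathrm{supp}(\widehat\alpha_h^{n+1})\subset(0,1)\times\bigl(0,\,1+(n+1)h\bigr)$. It therefore suffices that $1+(n+1)h\le\ell-2\varepsilon$ for every $n$ with $n\delta\le T_\ast$; since $nh\le T_\ast\tfrac{h}{\delta}\le T_\ast(\mathscr{C}_{\mathrm{ICFL}}^{\varepsilon})^{-1}$ by the lower (``inverse CFL'') bound $\tfrac{\delta}{h}\ge\mathscr{C}_{\mathrm{ICFL}}^{\varepsilon}$ in~\eqref{eqn:chd_ductal}, this holds once $T_\ast$ is taken to be a suitable mesh-independent fraction of $(\ell-1-2\varepsilon)\,\mathscr{C}_{\mathrm{ICFL}}^{\varepsilon}$, which is strictly positive because $\varepsilon<(\ell-1)/2$. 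This closes the induction and proves the stated $L^\infty$ bound; it also yields $\alpha_{h,\delta}\equiv0$ on $(0,T_\ast)\times(0,1)\times(\ell-2\varepsilon,\ell)$, which is precisely what later allows Corollary~\ref{cor:source} to be applied on $\Omega(\varepsilon)$ with effectively homogeneous boundary data in the proof of Proposition~\ref{prop:bv_ductal}.

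The main obstacle is not any individual estimate but the coupling inherent in the bootstrap: monotonicity of the scheme requires the \emph{a priori} velocity bound $\mathrm{V}_M$, which is only available through~\eqref{eqn:embed_u} while $\|\widehat\alpha_h^n\|_{L^\infty}\le\alpha_M$ is in force, whereas the length of the time window on which the tumour support remains strictly inside $\Omega(\varepsilon)$ is itself dictated by $\delta/h$. One must therefore check that the two inequalities in~\eqref{eqn:chd_ductal} are compatible --- that is, that $\mathscr{C}_{\mathrm{ICFL}}^{\varepsilon}$ can be chosen below the CFL threshold $\bigl(4\gamma\mathscr{C}_\varepsilon(1+\mathscr{C}Q\sqrt{2\ell}\,\alpha_M)\bigr)^{-1}$, the threshold being fixed by Lemmas~\ref{lemma:c_apr} and~\ref{lemma:uv_reg} --- and that the resulting $T_\ast$ is genuinely independent of $h$ and $\delta$. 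A minor additional point is to verify that extension by zero is consistent with~\eqref{eqn:ductal_fvm} at the artificial boundary $y=\ell-\varepsilon$, which is ensured exactly by~(ii) together with $h<\varepsilon$.
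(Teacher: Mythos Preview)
Your argument is correct, and it follows a different route from the paper's. The paper applies Proposition~\ref{prop:boundedness} directly: the source and the divergence term are estimated separately, which produces an exponentially growing bound $\|\alpha_{h,\delta}(t,\cdot)\|_{L^\infty(\Omega(\varepsilon))}\le\mathscr{F}(T)$ with $\mathscr{F}(0)=a^0<\alpha_M$, and $T_1$ is then defined as the first time $\mathscr{F}$ reaches $\alpha_M$; the support restriction gives a second threshold $T_2$, and $T_\ast=\min(T_1,T_2)$. You instead exploit the model-specific identity $\mathrm{div}(\boldsymbol{u}_h^n)=\gamma(1-c_h^n)$ (from~\eqref{eqn:wf_p}) so that the discrete divergence contribution and the discrete source cancel \emph{exactly}, turning~\eqref{eqn:ductal_fvm} into a genuine convex combination of the neighbouring values; this is the discrete analogue of rewriting~\eqref{eqn:cell_vf} as the pure transport equation $\partial_t\alpha+\boldsymbol{u}\cdot\nabla\alpha=0$. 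Your bound $\|\widehat\alpha_h^{n+1}\|_{L^\infty}\le\|\widehat\alpha_h^n\|_{L^\infty}\le a^0$ is therefore sharper (non-increasing in $n$, not merely $\le\alpha_M$), and you need no cutoff $T_1$ at all --- your $T_\ast$ is determined solely by the finite-speed support argument, which matches the paper's $T_2$. Both proofs use the same inductive coupling with Lemma~\ref{lemma:uv_reg} for the velocity bound, and both invoke~\eqref{eqn:chd_ductal} in the same way for the CFL inequality and for converting step count into physical time.
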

	\begin{proof}
	\textbf{Step 1:}	The proof employs strong induction on the time index $n$. Since $a^0 < \alpha_{M}$, the base case holds. To establish the inductive case, assume that $\sup_{\Omega(\varepsilon)}|\alpha_{h,\delta}(t_{k},\cdot)| \le \alpha_{M}$ for every $k \le n$. We establish that  $\sup_{\Omega(\varepsilon)}|\alpha_{h,\delta}(t_{n+1},\cdot)| \le \alpha_{M}$ holds for every $t_{n+1} < T_{1}$, for a fixed time $T_{1} > 0$.
	
		\medskip
\noindent	\textbf{Step 2:}	Recall $\|v\|_{L^1_tL^\infty(\Omega(\varepsilon)_T)} := \int_{0}^T \|v(t,\cdot)\|_{L^\infty(\Omega({\varepsilon}))}\,\mathrm{d}t$. The  results in~\eqref{eqn:embed_u} and~\eqref{eqn:chd_ductal} imply the CFL condition in Theorem~\ref{thm:bv-alpha}. Then,  Proposition~\ref{prop:boundedness} applied to~\eqref{eqn:ductal_fvm} yields, for any finite time $t < T$ 
		\begin{align}
		\|\alpha_{h,\delta}(t,\cdot)\|_{L^\infty(\Omega(\varepsilon))} \le \mathrm{B} \left(a_0 + \|\mathrm{div}(\boldsymbol{u}_h^n)\|_{L^1_tL^\infty(\Omega(\varepsilon)_T)} \right),
		\label{eqn:albnd_1}
		\end{align}
		where $\mathrm{B} = \exp(\|\mathrm{div}(\boldsymbol{u}_h^n)\|_{L^1_tL^\infty(\Omega(\varepsilon)_T)} + \gamma (T + ||c_{h}^n||_{L^1_tL^\infty(\Omega(\varepsilon)_T)}))$. Then~\eqref{eqn:embed_c},~\eqref{eqn:embed_u}, and~\eqref{eqn:albnd_1} imply
		$\|\alpha_{h,\delta}(t,\cdot)\|_{L^\infty(\Omega(\varepsilon))} \le \mathscr{F}(T)$, where
		\begin{align}
		\mathscr{F}(T) :=  \exp\left( T\gamma\,\mathscr{C}_{\varepsilon}\,(1  + \mathscr{C} Q  \sqrt{2\ell}\alpha_M) + TQ\mathscr{C}(2\ell)^{1/p}\alpha_M \right) \left(a_0 + T\gamma\,\mathscr{C}_{\varepsilon}\,(1  + Q \mathscr{C} \sqrt{2\ell}\alpha_M) \right).
		\end{align}
		Since $\mathscr{F}(0) - \alpha_{M} < 0$ and $\mathscr{F}$ is a nonnegative and monotonically increasing function, there exists a finite time $T_{1}$ such that $\|\alpha_{h,\delta}(t,\cdot)\|_{L^\infty(\Omega(\varepsilon))} \le \mathscr{F}(T_1) \le \alpha_{M}$ for every $t \in [0,T_1]$.
		
		\medskip
\noindent	\textbf{Step 3:}	Next, we need to show that $\alpha_{h,\delta}$ is bounded on $\Omega\backslash \Omega(\varepsilon)$. Note that $\alpha_{0}(x,y) = 0$ for $y \ge 1$. The finite speed of propagation of the scheme~\eqref{eqn:ductal_fvm} on $\Omega(\varepsilon)$ and ~\eqref{eqn:chd_ductal} yield $\alpha_{h,\delta} = 0$ on $(0,T_{2}) \times (\ell - 2\varepsilon,\ell)$, where   $T_{2} := (\ell - 2\varepsilon - 1) /(\gamma\,\mathscr{C}_{\varepsilon}\,(1  + Q \mathscr{C} \sqrt{2\ell}\alpha_M).$ Since $h < \varepsilon$, $\alpha_{i,j}^n  = 0$ for every $K_{i,j} \subset (\ell - 2\varepsilon,\ell)$, see Figure~\ref{fig:domains}. Define $T_{\ast} = \min(T_1,T_2)$ to obtain the conclusion.
	\end{proof}
	
  Observe that for every $(t,\boldsymbol{x},z) \in (0,T_\ast) \times \Omega \times (-\alpha_M,\alpha_M)$, the function $\mathfrak{S}(t,\boldsymbol{x},z) = \gamma (1 - c_{h,\delta}) z$  is Lipschitz continuous with respect to $z$, uniformly with respect to $t$ and $\boldsymbol{x}$ and Lipschitz continuous with respect to $\boldsymbol{x}$, uniformly with respect to $t$ and $z$. This is a direct consequence of~\eqref{eqn:embed_c}.

	\begin{proposition}
		The function $\alpha_{h,\delta} : (0,T_\ast) \times \Omega \rightarrow \mathbb{R}$ has bounded variation. Moreover, on $(0,T_\ast) \times \Omega$ it holds $|\alpha_{h,\delta}|_{BV_{x,y,t}} \le \mathscr{C}_{BV}$, where $\mathscr{C}_{BV}$ is independent of $h$ and $\delta$.
		\label{prop:bv_ductal}
	\end{proposition}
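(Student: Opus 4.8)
The plan is to recognise the semi--discrete update~\eqref{eqn:ductal_fvm}, restricted to the uniform grid $\mathrm{X}_h\times\mathrm{Y}_h$ on $\Omega(\varepsilon)$ (which is an admissible Cartesian grid with $\widetilde{c}=1$), as a particular instance of the scheme covered by Corollary~\ref{cor:source}, and then to verify that every piece of data entering that corollary is bounded uniformly in $h$ and $\delta$ on the horizon $[0,T_\ast]$. Indeed~\eqref{eqn:ductal_fvm} is the scheme of Corollary~\ref{cor:source} with linear flux $f(s)=s$, upwind numerical flux $g(a,b)=a$ — which is nondecreasing in the first argument, (weakly) nonincreasing in the second, and consistent, $g(a,a)=f(a)$, so that \ref{as.1}--\ref{as.2} hold and $\widehat{\mathrm{F}}_{i-1/2,j}=u_{i-1/2,j}^{n\,+}\ats{n}{i-1,j}-u_{i-1/2,j}^{n\,-}\ats{n}{i,j}$ is exactly $\mathrm{F}_{i-1/2,j}$ — advecting velocity equal to the time--reconstruct $\boldsymbol{u}_{h,\delta}$ of $(\boldsymbol{u}_h^n)_{n\ge0}$, and source $\mathfrak{S}(t,\boldsymbol{x},z)=\gamma(1-c_{h,\delta}(t,\boldsymbol{x}))z$; here $\gamma\delta\fint_{K_{i,j}}\ats{n}{i,j}(1-c_h^n)\,\mathrm{d}\boldsymbol{x}=\int_{t_n}^{t_{n+1}}\fint_{K_{i,j}}\mathfrak{S}(t,\boldsymbol{x},\ats{n}{i,j})\,\mathrm{d}\boldsymbol{x}\,\mathrm{d}t$ because $c_{h,\delta}(t,\cdot)=c_h^n$ on $(t_n,t_{n+1})$ and $\ats{n}{i,j}$ is cell--constant.

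Next I would check the hypotheses of Corollary~\ref{cor:source}. The CFL condition of Theorem~\ref{thm:bv-alpha} for this scheme was already secured in Step~2 of the proof of Proposition~\ref{prop:alpha_bounded} from~\eqref{eqn:embed_u} and~\eqref{eqn:chd_ductal}. For~\ref{as.3}: Proposition~\ref{prop:alpha_bounded} gives $\|\alpha_h^n\|_{0,\infty,\Omega}\le\alpha_M$ for all $n$ with $t_n\le T_\ast$, hence~\eqref{eqn:embed_u} bounds $\|\boldsymbol{u}_h^n\|_{1,\infty,\Omega(\varepsilon)}$ by $\gamma\mathscr{C}_\varepsilon(1+\mathscr{C}Q\sqrt{2\ell}\alpha_M)$ independently of $h,\delta$, so that $\|\boldsymbol{u}_{h,\delta}\|_{L^1_tL^\infty}$ and $\|\nabla\boldsymbol{u}_{h,\delta}\|_{L^1_tL^\infty}$ over $\Omega(\varepsilon)_{T_\ast}$ are uniformly bounded; taking $q$ over all of $L^2(\Omega)$ in~\eqref{eqn:wf_p} gives $\mathrm{div}(\boldsymbol{u}_h^n)=\gamma(1-c_h^n)$ a.e., whence $|\mathrm{div}(\boldsymbol{u}_h^n)|_{BV_{x,y}}=\gamma|c_h^n|_{BV_{x,y}}\le\gamma|\Omega|\,\|\nabla c_h^n\|_{L^\infty(\Omega)}$ is bounded uniformly by~\eqref{eqn:embed_c}, controlling $|\mathrm{div}(\boldsymbol{u}_{h,\delta})|_{L^1_tBV_{x,y}}$. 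Finally, the remark preceding the proposition records that $\mathfrak{S}(t,\boldsymbol{x},z)=\gamma(1-c_{h,\delta})z$ satisfies~\ref{sa} on $(0,T_\ast)\times\Omega\times(-\alpha_M,\alpha_M)$, with Lipschitz constants $\le\gamma(1+\|c_{h,\delta}\|_\infty)$ in $z$ and $\le\gamma\alpha_M\|\nabla c_{h,\delta}\|_\infty$ in $\boldsymbol{x}$, and $|\mathfrak{S}(t,\cdot,z)|_{BV_{x,y}}\le\gamma\alpha_M|c_{h,\delta}(t,\cdot)|_{BV_{x,y}}$ — all uniformly controlled by~\eqref{eqn:embed_c} and Proposition~\ref{prop:alpha_bounded}.

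I would then invoke Corollary~\ref{cor:source} on $\Omega(\varepsilon)$, taking care that the corollary is phrased for a velocity vanishing on the whole boundary whereas here $\boldsymbol{u}_h^n\cdot\boldsymbol{n}=0$ only on $x\in\{0,1\}$ and $y=0$. The sole place $\boldsymbol{u}_{|\partial\Omega}=\boldsymbol{0}$ is used in the proofs of Propositions~\ref{prop:space_bv}--\ref{prop:temp_bv} (hence of Corollary~\ref{cor:source}) is the re--indexing after summation by parts, which only needs the boundary fluxes to vanish. They do: on $x\in\{0,1\}$ and $y=0$ because $\boldsymbol{u}_h^n\cdot\boldsymbol{n}=0$ forces the edge velocities there to vanish, and on the artificial top edge $y=\ell-\varepsilon$ because, by Step~3 of Proposition~\ref{prop:alpha_bounded}, $\alpha_{h,\delta}\equiv0$ on $(0,T_\ast)\times(\ell-2\varepsilon,\ell)$, so the cells abutting $y=\ell-\varepsilon$ carry $\ats{n}{i,j}=0$. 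Corollary~\ref{cor:source} thus gives $|\alpha_{h,\delta}|_{BV_{x,y,t}(\Omega(\varepsilon)_{T_\ast})}\le\mathscr{C}_{BV}$ with $\mathscr{C}_{BV}$ assembled from the uniform bounds above. Since $\alpha_{h,\delta}$ is the trivial extension by $0$ to $\Omega$ and in fact vanishes on the whole strip $(0,T_\ast)\times(\ell-2\varepsilon,\ell)$ straddling $\{y=\ell-\varepsilon\}$, no variation is created across that interface and $|\alpha_{h,\delta}|_{BV_{x,y,t}(\Omega_{T_\ast})}=|\alpha_{h,\delta}|_{BV_{x,y,t}(\Omega(\varepsilon)_{T_\ast})}\le\mathscr{C}_{BV}$, which is the claim.

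I expect the main obstacle to be precisely this last point: justifying rigorously that the estimate of Corollary~\ref{cor:source}, established under the convenience hypothesis $\boldsymbol{u}_{|\partial\Omega}=\boldsymbol{0}$, still applies on $\Omega(\varepsilon)$ where $\boldsymbol{u}_h^n$ does not vanish on the artificial top boundary — this is where the finite--speed--of--propagation/time--restriction mechanism is essential — and book--keeping that every constant in the resulting bound depends only on $\varepsilon$, $T_\ast$, $\alpha_M$, $\Omega$, $\gamma$, $Q$, $\ell$ and the elliptic--regularity constants of Lemmas~\ref{lemma:c_apr}--\ref{lemma:uv_reg}, never on $h$ or $\delta$.
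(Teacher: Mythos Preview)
Your proposal is correct and follows exactly the approach the paper sketches: apply Corollary~\ref{cor:source} on $\Omega(\varepsilon)$ with $f(s)=s$, the upwind numerical flux, velocity $\boldsymbol{u}_{h,\delta}$ and source $\mathfrak{S}(t,\boldsymbol{x},z)=\gamma(1-c_{h,\delta})z$, check the hypotheses via Proposition~\ref{prop:alpha_bounded} and~\eqref{eqn:embed_c}--\eqref{eqn:embed_u}, and use $\alpha_{h,\delta}\equiv0$ on $(0,T_\ast)\times(0,1)\times(\ell-2\varepsilon,\ell)$ both to handle the artificial top boundary and to pass from $\Omega(\varepsilon)$ to $\Omega$. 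The paper's one--line proof lists precisely these ingredients; you have simply filled in the bookkeeping --- in particular the point about boundary fluxes vanishing on $y=\ell-\varepsilon$ --- that the paper leaves implicit.
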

	
	The proof of Proposition~\ref{prop:bv_ductal} follows from an application of Corollary~\ref{cor:source}, the Lipschitz continuity of $\gamma (1 - c_{h,\delta}(t,\boldsymbol{x})) z$  of $(t,\boldsymbol{x},z)$ on $(0,T_\ast) \times \Omega \times (-\alpha_M,\alpha_M)$, and the fact that $\alpha_{h,\delta} = 0$ on $(0,1) \times (\ell - 2\epsilon,\ell)$, see Figure~\ref{fig:domains}.
	
	\subsubsection*{Proof of Theorem~\ref{thm:compact}}
	Recall that $\Omega_{T_{\ast}} = (0,T_{\ast}) \times \Omega$. Proposition~\ref{prop:bv_ductal} shows that $\alpha_{h,\delta} \in BV(\Omega_{T_{\ast}})$. Therefore, an application of Theorem~\ref{appen_id.c} provides the existence of subsequence of $\{\alpha_{h,\delta} \}$ -- assigned with the same indices -- and a function $\alpha \in  BV(\Omega_{T_\ast})$ such that $\alpha_{h,\delta} \rightarrow \alpha$ almost everywhere and $L^\infty$ weak$-{\star}$ on $\Omega_{T_\ast}.$ Lemma~\ref{lemma:c_apr} and Lemma~\ref{lemma:uv_reg} show that $c_{h,\delta} \in L^2(0,T_\ast;V)$ and $(\boldsymbol{u}_{h,\delta},p_{h,\delta}) \in L^2(0,T_\ast;\boldsymbol{H}) \times L^2(0,T_{\ast};L^2(\Omega))$ for every $h$ and $\delta$. Observe that $L^2(0,T_\ast;V)$ and $L^2(0,T_\ast;\boldsymbol{H}) \times L^2(0,T_{\ast};L^2(\Omega))$ are Hilbert spaces.  Hence, there exist subsequences of $\{c_{h,\delta}\}$ and $\{(\boldsymbol{u}_{h,\delta},p_{h,\delta})\}$, and functions  $c \in L^2(0,T_\ast;V)$ and $(\boldsymbol{u},p) \in L^2(0,T_\ast;\boldsymbol{H}) \times L^2(0,T_{\ast};L^2(\Omega))$ such that $c_{h,\delta} \halfarrow c$ weakly in $L^2(0,T_\ast;V)$ and $(\boldsymbol{u}_{h,\delta},p_{h,\delta}) \halfarrow (\boldsymbol{u},p)$ weakly in $L^2(0,T_\ast;\boldsymbol{H}) \times L^2(0,T_{\ast};L^2(\Omega))$.
		\subsection{Convergence}
	\label{sec:convergence}
		\begin{theorem}[Convergence]
		\label{thm:convergence}
		Let $(\alpha,\boldsymbol{u},p,\boldsymbol{c})$ be a limit of any subsequence of the family of functions $\{(\alpha_{h,\delta},\boldsymbol{u}_{h,\delta},p_{h,\delta},c_{h,\delta})\}_{h,\delta}$  obtained from the semi--discrete scheme in the sense of Theorem~\ref{thm:compact}. Then, $(\alpha,\boldsymbol{u},p,\boldsymbol{c})$ is a solution to the problem~\eqref{eqn:cell_vf}--\eqref{eqn:nutr} for the finite time $T_{\ast} < \infty.$
	\end{theorem}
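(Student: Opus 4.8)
The overall strategy is the standard "pass to the limit in the discrete weak formulation" argument, carried out separately for each of the four equations~\eqref{eqn:fvm_weak}--\eqref{eqn:c_weak}, using the compactness furnished by Theorem~\ref{thm:compact}. The key asymmetry is that only $\alpha_{h,\delta}$ converges strongly (a.e.\ and in $L^1$), whereas $\boldsymbol{u}_{h,\delta}$, $p_{h,\delta}$, $c_{h,\delta}$ converge only weakly; so in every product term at least one factor must be the strongly convergent $\alpha_{h,\delta}$ (or a function thereof, for which a.e.\ convergence plus $L^\infty$ bounds upgrade to $L^q$ convergence for all $q<\infty$ by dominated convergence). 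First I would fix a test function and rewrite the semi--discrete iterates~\eqref{eqn:sd_oxyg}, \eqref{eqn:wf_cv}--\eqref{eqn:wf_p}, and~\eqref{eqn:ductal_fvm} in space--time form by multiplying by $\delta$, summing over $n$, and using the time--reconstruct of Definition~\ref{def:time_rec}; this produces discrete analogues of~\eqref{eqn:fvm_weak}--\eqref{eqn:c_weak} with the continuous test function replaced by its piecewise-constant (in time, and in space where relevant) projection.

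The three linear/elliptic equations are the easy part. For~\eqref{eqn:c_weak}: given $\varphi \in L^2(0,T_\ast;V)$, use its time-averaged piecewise-constant projection $\varphi_{h,\delta}$ in~\eqref{eqn:sd_oxyg}; since $c_{h,\delta}\rightharpoonup c$ weakly in $L^2(0,T_\ast;V)$ the left-hand side passes to the limit against the strongly convergent $\nabla\varphi_{h,\delta}\to\nabla\varphi$ in $L^2$, and on the right-hand side $\alpha_{h,\delta}\to\alpha$ strongly in $L^2(\Omega_{T_\ast})$ against $\varphi_{h,\delta}\to\varphi$. For the velocity--pressure system~\eqref{eqn:u_weak}--\eqref{eqn:p_weak}: test~\eqref{eqn:wf_cv}--\eqref{eqn:wf_p} with time-projections of $\boldsymbol{\psi}\in L^2(0,T_\ast;\boldsymbol{H})$ and $w\in L^2(0,T_\ast;L^2(\Omega))$; the bilinear form $\mathrm{a}(\cdot,\cdot)$ and the divergence pairing are continuous and linear, hence weakly continuous, so $\boldsymbol{u}_{h,\delta}\rightharpoonup\boldsymbol{u}$ and $p_{h,\delta}\rightharpoonup p$ handle those terms, the boundary term is independent of the discrete solution, and the only product term $\int \gamma(1-c_{h,\delta})w$ passes to the limit because $c_{h,\delta}\rightharpoonup c$ weakly and $w$ is fixed. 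One must also check that the limits $\boldsymbol{u},c$ inherit the boundary conditions defining $\boldsymbol{H}$ and $V$ — this follows since these spaces are closed subspaces of $\boldsymbol{H}^1$, $H^1$, hence weakly closed.

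The delicate equation is the conservation law~\eqref{eqn:fvm_weak}, and this is where the main obstacle lies. Taking $\vartheta\in\mathscr{C}_c^\infty([0,T_\ast)\times\Omega)$, one rewrites the scheme~\eqref{eqn:ductal_fvm} summed against $\vartheta$ using discrete integration by parts (Abel summation in time, reindexing the flux sums in space) to obtain, up to consistency errors, the terms $\int_{\Omega_{T_\ast}}\alpha_{h,\delta}\partial_t\vartheta$, $\int_{\Omega_{T_\ast}}(\text{numerical flux})\cdot\nabla\vartheta$, $\int_{\Omega_{T_\ast}}\gamma\alpha_{h,\delta}(1-c_{h,\delta})\vartheta$, and $\int_\Omega\alpha_h^0\vartheta(0,\cdot)$. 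The initial term converges by~\eqref{eqn:alpha_ini} and $\alpha_0\in L^1$. The first term converges by strong $L^1$ convergence of $\alpha_{h,\delta}$. The reaction term is exactly where both compactness types are needed: $\gamma\alpha_{h,\delta}(1-c_{h,\delta})\vartheta \to \gamma\alpha(1-c)\vartheta$ because $\alpha_{h,\delta}\to\alpha$ \emph{strongly} in $L^2(\Omega_{T_\ast})$ (here the $BV$ estimate of Theorem~\ref{thm:compact} is indispensable — weak-$\ast$ alone would fail) while $(1-c_{h,\delta})\vartheta \rightharpoonup (1-c)\vartheta$ \emph{weakly} in $L^2$. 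The flux term is the hardest: the numerical flux $\widehat{\mathrm{F}},\widehat{\mathrm{G}}$ in~\eqref{eqn:ductal_fvm} is the linear upwind flux built from $\boldsymbol{u}_h^n$ and neighbouring values of $\alpha_h^\cdot$, so it is bilinear in the \emph{two} quantities $\alpha_{h,\delta}$ (strong $L^1$, bounded $L^\infty$, hence strong $L^2$) and $\boldsymbol{u}_{h,\delta}$ (only weak $L^2$, but with the extra regularity $\|\boldsymbol{u}_h^n\|_{1,\infty,\Omega(\varepsilon)}$ of Lemma~\ref{lemma:uv_reg} and the boundary support argument of Proposition~\ref{prop:alpha_bounded} confining $\alpha_{h,\delta}$ to $\Omega(\varepsilon)$). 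I would argue that, because $\alpha_{h,\delta}$ is compactly supported away from $y=\ell$ and the relevant velocity averages converge, say, strongly in $L^2_{\mathrm{loc}}$ on $\Omega(\varepsilon)$ (using that $\boldsymbol{u}_{h,\delta}$ is uniformly bounded in $L^2(0,T_\ast;\boldsymbol{H}^1(\Omega(\varepsilon)))$, which gives Aubin--Lions-type strong compactness in $L^2$ in space — no time derivative bound is needed since $\boldsymbol{u}$ enters only through its values, not through a time-evolution), the product $f(\alpha_{h,\delta})\,\boldsymbol{u}_{h,\delta}$ converges strongly to $f(\alpha)\,\boldsymbol{u}$ in $L^1(\Omega_{T_\ast})$; in the present model $f$ is the identity, simplifying this to $\alpha_{h,\delta}\boldsymbol{u}_{h,\delta}$. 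Finally I would bound the consistency errors (the difference between the numerical flux evaluated at cell values and the exact flux, and the replacement of $\vartheta$ by its projection) by $Ch_{\max}\|\vartheta\|_{\mathscr{C}^1}$ using Lipschitz continuity of $f$ and the uniform $BV$ bound on $\alpha_{h,\delta}$, so they vanish as $h,\delta\to 0$. Assembling these four limits yields that $(\alpha,\boldsymbol{u},p,c)$ satisfies~\eqref{eqn:fvm_weak}--\eqref{eqn:c_weak} on $(0,T_\ast)$, which is the assertion.
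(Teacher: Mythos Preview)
Your overall architecture matches the paper's: pass to the limit in the three elliptic equations by linearity and weak convergence (the paper in fact says these ``follow from a direct application of weak convergence'' and omits details), and spend the real effort on the conservation law, where one multiplies~\eqref{eqn:ductal_fvm} by test values, sums, applies discrete integration by parts, and identifies the limits of the time term, the initial term, the reaction term, and the flux term. Your treatment of the time, initial, and reaction terms is correct and essentially identical to the paper's; in particular you correctly flag that the reaction term $\gamma\alpha_{h,\delta}(1-c_{h,\delta})\vartheta$ is exactly the place where strong convergence of $\alpha_{h,\delta}$ (from the $BV$ estimate) is indispensable.

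The gap is in your flux-term argument. You assert that boundedness of $\boldsymbol{u}_{h,\delta}$ in $L^2(0,T_\ast;\boldsymbol{H}^1(\Omega(\varepsilon)))$ ``gives Aubin--Lions-type strong compactness in $L^2$ in space --- no time derivative bound is needed''. This is false: boundedness in $L^2_t H^1_x$ alone does \emph{not} yield strong compactness in $L^2_t L^2_x$; Aubin--Lions genuinely needs some control on time increments, which you have not established for $\boldsymbol{u}_{h,\delta}$. Consequently your conclusion that $\alpha_{h,\delta}\boldsymbol{u}_{h,\delta}\to\alpha\boldsymbol{u}$ \emph{strongly} in $L^1$ is unjustified. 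The paper avoids this entirely: after discrete integration by parts, the flux contribution is rewritten (up to an $O(h)$ upwinding correction controlled by the uniform $BV$ bound, your ``consistency error'') as $\int_{\Omega_{T_\ast}} v_{h,\delta}\,\alpha_{h,\delta}\,\partial_{h,\delta}\vartheta\,\mathrm{d}\boldsymbol{x}\,\mathrm{d}t$. Then one uses Lemma~\ref{appen_id.f} (bounded--strong) to get $\alpha_{h,\delta}\,\partial_{h,\delta}\vartheta\to\alpha\,\partial_y\vartheta$ strongly in $L^2(\Omega_{T_\ast})$, and pairs this with the \emph{weak} $L^2$ convergence $v_{h,\delta}\rightharpoonup v$ via Lemma~\ref{appen_id.e} (weak--strong). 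No strong convergence of the velocity is needed or claimed. Replace your Aubin--Lions step by this weak--strong pairing and the argument goes through.
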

	\medskip
	\noindent \textbf{Proof of Theorem~\ref{thm:convergence}.}\;The proof of Theorem~\ref{thm:convergence} has two steps.
	
	\medskip
	\noindent \textbf{Step 1. (Convergence of tumour cell concentration)}\;\;
	Let $\alpha : \Omega_{T_{\ast}} \rightarrow \mathbb{R}$ be a limit provided by Theorem~\ref{thm:compact} such that $\alpha_{h,\delta} \rightarrow \alpha$ almost everywhere in $\Omega_{T_{\ast}}$. Then, we show that $\alpha$ satisfies~\eqref{eqn:fvm_weak} for every $\vartheta \in \mathscr{C}_{c}^{\infty}([0,T_{\ast}) \times \Omega)$.

	Set $\varphi\in \mathscr{C}_c^{\infty} ([0,T_{\ast})\times \Omega)$ and $N_{\ast} = T_{\ast}/\delta$. For ease of notations, let $\varphi(t,\cdot)$ denotes its trivial extension on $\mathbb{R}^2$, for every $t \ge 0$. Multiply~\eqref{eqn:ductal_fvm} by $h^2 \vartheta_{i,j}^{n}$, $\vartheta_{i,j}^{n} := \int_{K_{i,j}} \vartheta(t_n,\cdot)\,\mathrm{d}\boldsymbol{x}$ and sum over the indices to obtain $T_1 + T_2^x + T_2^y = T_3$, where 
	\allowdisplaybreaks
	\begin{align}
	T_1 &:= h^2\sum_{n=0}^{N_{\ast}-1} \sum_{i=0}^{I} \sum_{j=0}^{J} (\ats{n+1}{i,j} - \ats{n}{i,j}) \vartheta_{i,j}^{n}, \\
	T_2^{x} &:= h^2\delta \sum_{n=0}^{N_{\ast}-1} \sum_{i=0}^{I}\sum_{j=0}^{J}  \left( \up{n}{i+1/2,j}\ats{n}{i,j} - \um{n}{i+1,j}\ats{n}{i+1/2,j} - \up{n}{i-1/2,j}\ats{n}{i-1,j} + \um{n}{i-1/2,j}\ats{n}{i,j} \right)\vartheta_{i,j}^{n},  \\
	T_2^{y} &:= h^2\delta \sum_{n=0}^{N_{\ast}-1} \sum_{i=0}^{I}\sum_{j=0}^{J}  \left( \vp{n}{i,j+1/2}\ats{n}{i,j} - \vm{n}{i,j+1/2}\ats{n}{i,j+1} - \vp{n}{i,j-1/2}\ats{n}{i,j-1} + \vm{n}{i,j-1/2}\ats{n}{i,j} \right)\vartheta_{i,j}^{n}, \text{ and }\\
	T_3 &:= h^2\delta \sum_{n=0}^{N_{\ast}-1} \sum_{i=0}^{I-1} \sum_{j=0}^{J-1} \gamma \vartheta_{i,j}^{n} \int_{t_n}^{t_{n+1}}\fint_{K_{i,j}}\alpha_{h,\delta}(1 - c_{h,\delta})\,\mathrm{d}\boldsymbol{x},\mathrm{d}t .
	\end{align}
	Define the piecewise constant function $\alpha^0_{h\vert{K_{i,j}}} := \fint_{K_{i,j}} \alpha_0(\boldsymbol{x})\,d\boldsymbol{x}$ for $0 \le i \le I$ and $0 \le j \le J$. Since $\vartheta^{N_\ast}_{i,j}=0$ for all $i,j$, use discrete integration by parts~\ref{appen_id.b} to arrive at
	\begin{align}
	T_1 = -h^2\sum_{n=0}^{N_{\ast}-1} \sum_{i=0}^{I} \sum_{j=0}^{J} (\vartheta_{i,j}^{n+1} - \vartheta_{i,j}^{n})\ats{n+1}{i,j} -  \int_{\Omega} \alpha^0_h(\boldsymbol{x}) \vartheta(0,\boldsymbol{x})\,\mathrm{d}\boldsymbol{x}. 
	\label{eqn:ca2-t1}
	\end{align}
 A direct calculation shows the first term in the right hand side of~\eqref{eqn:ca2-t1} is equal to
	\begin{align}
	-\sum_{n=0}^{N_{\ast}-1} \sum_{i=0}^{I}\sum_{j=0}^{J} \alpha_{i,j}^{n+1} \int_{t_{n}}^{t_{n+1}}\int_{K_{i,j}}  \partial_{t} \vartheta(t,\boldsymbol{x})\,d\boldsymbol{x}\,\mathrm{d}t = -\int_{\delta}^{T_{\ast} + \delta}\int_{\Omega}  \alpha_{h,\delta}(t,\boldsymbol{x}) \partial_{t} \vartheta(t-\delta,\boldsymbol{x})\,\mathrm{d}\boldsymbol{x}\,\mathrm{d}t. \nonumber 
	\end{align}
	Note that $\alpha_{h,\delta} \rightarrow \alpha$ almost everywhere (see Theorem~\ref{thm:compact}) as $h,\delta\to 0$. Then, apply Lebesgue's dominated convergence theorem to show that the first term in the right hand side of~\eqref{eqn:ca2-t1} converges to
	$-\int_{\Omega_{T_{\ast}}} \alpha(t,\boldsymbol{x}) \partial_{t} \vartheta(t,\boldsymbol{x}) \,\mathrm{d}t\,\mathrm{d}\boldsymbol{x}.$
Since $\alpha_h^0 \rightarrow \alpha_0$ in $L^2(\Omega)$, the second term in the right hand side of~\eqref{eqn:ca2-t1} converges to $-\int_{\Omega} \alpha_0(\boldsymbol{x}) \vartheta(0,\boldsymbol{x})\,\mathrm{d}\boldsymbol{x}$. 

The convergence of $T_{2}^{y}$ is shown next. The steps for $T_2^{x}$ follow similar steps. An application~\ref{appen_id.b} on $T_2^{y}$ leads to
	\begin{align}
	T_2^{y}  ={}&   \delta h^2 \sum_{n=0}^{N_{\ast}-1} \sum_{i=0}^{I}\sum_{j=0}^{J} \vartheta_{i,j}^{n} \left( |v_{i,j+1/2}^n|\dfrac{\ats{n}{i,j} - \ats{n}{i,j+1}}{2} - |v_{i,j-1/2}^n|\dfrac{\ats{n}{i,j-1} -\ats{n}{i,j}}{2} \right) \nonumber \\
	&+ \delta h^2 \sum_{n=0}^{N_{\ast}-1} \sum_{i=0}^{I} \sum_{j=0}^{J} \vartheta_{i,j}^{n} \left( v_{i,j+1/2}^n\dfrac{\ats{n}{i,j} + \ats{n}{i,j+1}}{2} - v_{i,j-1/2}^n\dfrac{\ats{n}{i,j-1} + \ats{n}{i,j}}{2} \right) \nonumber =: T_{21} + T_{22}.
	\end{align}
	Set  $\ats{n}{i,J+1} = 0$ and $\ats{n}{i,-1} = 0$. Then,  
	\begin{align}
	|T_{21}| \le&  \left| \delta h^2 \sum_{n=0}^{N_{\ast}-1} \sum_{i=0}^{I}\sum_{j=0}^{J-1} (\vartheta_{i,j+1}^{n} - \vartheta_{i,j}^n) |v_{i,j+1/2}^n|\dfrac{\ats{n}{i,j} - \ats{n}{i,j+1}}{2}\right|  + \mathcal{O}(h) \\
	&\le \dfrac{h}{2} ||\boldsymbol{u}_{h,\delta}||_{L^\infty(\Omega_{T_{\ast}})} ||\partial_x \vartheta(t,\boldsymbol{x})||_{L^\infty(\Omega_{T_{\ast}})} \sum_{n=0}^{N_{\ast}-1} \delta \sum_{i=0}^{I} h \sum_{j=0}^{J-1} |\ats{n}{i,j} - \ats{n}{i,j+1}| + \mathcal{O}(h),
	\end{align}
	and hence~\eqref{eqn:embed_u} and Proposition~\ref{prop:bv_ductal} imply $|T_{21}| \rightarrow 0$ as $h \rightarrow 0$. Use~\eqref{appen_id.b} to obtain
	\begin{align}
	T_{22} = -\delta h^2 \sum_{n=0}^{N_{\ast}-1} \sum_{i=0}^{I}\sum_{j=0}^{J} (\vartheta_{i,j+1}^n - \vartheta_{i,j}^n) v_{i,j+1/2}^n \dfrac{\alpha_{i,j}^n + \alpha_{i,j+1}^n}{2} + \mathcal{O}(h). 
	\label{eqn:ii1_3}
	\end{align}
	Add and subtract $\delta \sum_{n=0}^{N_{\ast}-1} \sum_{i=0}^{I} \sum_{j=0}^{J} (\vartheta_{i,j+1}^n - \vartheta_{i,j}^n) \frac{v_{i,j-1/2}^n}{2} \alpha_{i,j}^n$ to~\eqref{eqn:ii1_3} to arrive at
	\begin{align}
	T_{22} ={}& \delta h^2 \sum_{n=0}^{N_{\ast}-1} \sum_{i=0}^{I}\sum_{j=0}^{J} \dfrac{v_{i,j+1/2}^n\alpha_{i,j+1}^n}{2} (\vartheta_{i,j+1}^n - \vartheta_{i,j}^n - \vartheta_{i,j+2}^n + \vartheta_{i,j+1}^n) \nonumber \\
	&- \delta h^2 \sum_{n=0}^{N_{\ast}-1} \sum_{i=0}^{I} \sum_{j=0}^{J} (\vartheta_{i,j+1}^n - \vartheta_{i,j}^n) \dfrac{v_{i,j+1/2}^n + v_{i,j-1/2}^{n}}{2} \alpha_{i,j}^n. 
	\label{eqn:ca2-ta2}
	\end{align}
 Use of the definition of $\vartheta_{i,j}^n$, mean value theorem, and CFL condition~\eqref{eqn:chd_ductal} to show that the first term in the right hand side of~\eqref{eqn:ca2-ta2} converges to zero. Define $\partial_{h,\delta} \varphi : \Omega_{T_{\ast}} \rightarrow \mathbb{R}$ by $\partial_{h,\delta} \varphi := (\vartheta_{i,j+1}^n - \vartheta_{i,j}^n)/h$ on $(t_n,t_{n+1}) \times K_{i,j}$. Then the second term in the right hand side of~\eqref{eqn:ca2-ta2} can be expressed as
	\begin{align}
	-\int_{0}^{T_{\ast}}\int_{\Omega} v_{h,\delta}\alpha_{h,\delta}\partial_{h,\delta}\vartheta\,\mathrm{d}\boldsymbol{x}\,\mathrm{d}t  
	&\rightarrow -\int_{0}^{T_{\ast}}\int_{\Omega} v\,\alpha\,\partial_{x}\vartheta\,\mathrm{d}\boldsymbol{x}\,\mathrm{d}t,
	\end{align}
	where Lemmas~\ref{appen_id.e} and \ref{appen_id.f} are applied in the last step.  Follow the same steps for $T_{2}^{x}$ to obtain $T_{2} \rightarrow -\int_{0}^{T_{\ast}}\int_{\Omega} \alpha\,\boldsymbol{u}\cdot\nabla \vartheta\,\mathrm{d}\boldsymbol{x}\,\mathrm{d}t.$ Rewrite $T_{3}$ and apply Lemma~\ref{appen_id.e}
	\begin{gather}
	\int_{0}^{T_{\ast}} \int_{\Omega}  \gamma \alpha_{h,\delta} (1 - c_{h,\delta})\,\mathrm{d}\boldsymbol{x}\,\mathrm{d}t
	\rightarrow \int_{0}^{T}\int_{\Omega} \gamma \alpha (1 - c)\,\mathrm{d}\boldsymbol{x}\,\mathrm{d}t. \nonumber
	\end{gather}
 Plug the above in $T_1+T_2^x + T_2^y =T_3$ to arrive the desired conclusion.
 
 The proofs of step 2 and step 3 follows from a direct application of weak convergence of $(u_{h,\delta},p_{h,\delta})$ and $c_{h,\delta}$. Hence, we omit the proofs.
 
 \medskip
 \noindent \textbf{Step 2. (Convergence of pressure--velocity system)}\;\;Let $(\boldsymbol{u},p) : \Omega_{T_{\ast}} \rightarrow \mathbb{R}^3$ be a limit provided by Theorem~\ref{thm:compact} such that $\boldsymbol{u}_{h,\delta} \halfarrow \boldsymbol{u}$ weakly in $L^2(0,T_{\ast};\boldsymbol{H})$ and $p_{h,\delta} \halfarrow p$ weakly in $L^2(0,T_{\ast};L^2(\Omega))$. Then, $(\boldsymbol{u},p)$ satisfies~\eqref{eqn:pv_sys} for every $(\boldsymbol{\psi},q) \in L^2(0,T_{\ast};\boldsymbol{H}) \times  L^2(0,T;L^2(\Omega))$.
 
 \medskip
\noindent \textbf{Step 3. (Convergence of nutrient concentration)}\;\;Let $c : \Omega_{T_{\ast}} \rightarrow \mathbb{R}$ be a limit provided by Theorem~\ref{thm:compact} such that $c_{h,\delta} \halfarrow c$ weakly in $L^2(0,T_{\ast};V)$. Then $c$ satisfies~\eqref{eqn:c_weak} for every $\varphi \in L^2(0,T_{\ast};V)$.

	\section{Conclusions}
	\label{sec:con}
	A uniform estimate on total variation of discrete solutions obtained by applying finite volume schemes on conservation laws of the form~\eqref{eqn:cons_law} in two and three spatial dimensions for nonuniform Cartesian grids is proved. We relaxed the standard assumption that the advecting velocity vector is divergence free. This enables us to apply the finite volume scheme to problems in which the advecting velocity vector is a nonlinear function of the conserved variable. Since the underlying meshes are nonuniform Cartesian it is possible to adaptively refine the mesh on regions where the solution is expected to have sharp fronts. A uniform $BV$ estimate is also obtained for finite volume approximations of conservation laws of the type~\eqref{eqn:fully_non} that has a fully nonlinear flux on nonuniform Cartesian grids. Numerical experiments support the theoretical findings. The counterexample by B. Despr\'{e}s and numerical evidence from Table~\ref{tab:tab_des} indicate that nonuniform Cartesian grids are the current limit on which we can obtain uniform $BV$ estimates. Extending Theorem~\ref{thm:bv-alpha} to perturbed Cartesian grids (Figure~\ref{fig:pert}) might be the immediate future step. Theorem~\ref{thm:convergence}, which proves the existence of a weak solution of~\eqref{eqn:ductal_tum}, attests to the applicability of Theorem~\ref{thm:bv-alpha} in the analytical study of coupled systems involving conservation laws and elliptic equations.

\section*{Acknowledgement}
The author is grateful to Professors J\'{e}r\^{o}me Droniou, Jennifer Anne Flegg and Neela Nataraj for their valuable comments and suggestions.  The author also thanks Professors Claire Chainais-Hilairet, Thierry Gallou\"{e}t, and  Julien Vovelle for fruitful discussions. The author expresses gratitude towards ANZIAM Student Support Scheme, IIT Bombay, Professors J\'{e}r\^{o}me Droniou and Jennifer Anne Flegg for funding the travel and hospitality expenses during the author's stay at Monash University on February--March, 2020 during which this work was carried out.

	\bibliographystyle{plain}
	\bibliography{sbv_ref}
	
	\setcounter{equation}{0}
	\appendix
	\numberwithin{equation}{subsection}
	\section*{Appendix}
	\label{sec:appen}
	\renewcommand{\thesubsection}{\Roman{subsection}}
	\subsection{Identities}
	\label{appen_A}
	\renewcommand{\theequation}{\ref{appen_A}.\arabic{equation}}
	\begin{enumerate}[label= (\roman*).,ref=\ref{appen_A}.(\roman*)]
		\item\label{appen_id.a}  If $a,b,c,d \in \mathbb{R}$, then the following identities hold: $ab - cd  = \frac{(a+c)(b-d)}{2} + \frac{(a-c)(b+d)}{2} \quad \text{ and }$ and $a = a^{+} - a^{-}$, where 
	 $a^{+} = \max(a,0)$ and $a^{-} = -\min(a,0)$.
		\item\label{appen_id.b} \textbf{Discrete integration by parts formula. \cite[Section D.1.7]{droniou2018gradient}} 
		For any families $(a_n)_{n=0,\ldots,N}$ and $(b_n)_{n=0,\ldots,N}$ of real numbers, it holds
		\begin{align}
		\sum_{n=0}^{N-1} (a_{n+1} - a_{n})b_{n} = -\sum_{n=0}^{N-1} a_{n+1} (b_{n+1} - b_{n}) + a_{N}b_{N} - a_{0}b_{0}.
		\label{eqn:disc_int_parts}
		\end{align}
	\end{enumerate}
\subsection{Theorems}
\label{appen_B}
\renewcommand{\theequation}{\ref{appen_B}.\roman{equation}}
\begin{enumerate}[label= (\roman*).,ref=\ref{appen_B}.(\roman*)]
		\item\label{appen_id.c} \textbf{Helly's selection theorem. \cite[Theorem 4, p.~176]{evansmeas}.} 
		Let $\Omega \subset \mathbb{R}^d$ ($d \ge 1$) be an open and bounded set with a Lipschitz boundary $\partial \Omega$, and $(f_n)_{n\in \mathbb{N}}$ be a sequence in $BV(\Omega)$ such that $(||f_{n}||_{BV(\Omega)})_n$ is uniformly bounded. Then, there exists a subsequence $(f_n)_n$ up to re-indexing and a function $f \in BV(\Omega)$ such that as $n \rightarrow \infty$, $f_n \rightarrow f$ in $L^1(U)$ and almost everywhere in $\Omega$.
		
		\item\label{thm:c_reg} \textbf{Internal regularity of Poisson equation. {\cite[Theorem III.4.2]{Boyer2013}}} 	Let $f \in L^2(\Omega)$ and $\Omega \subset \mathbb{R}^2$ be an open and bounded set.  If $u \in H^1(\Omega)$ is a solution of the Poisson equation $-\Delta u = f$,  then $u \in H^2_{\mathrm{loc}}(\Omega)$. Also, for every bounded and open sets $\overline{\Omega_{1}} \subset \Omega_{2} \subset \overline{\Omega_{2}} \subset \Omega$ there exists a constant  $\mathscr{C}(\Omega_{1},\Omega_{2}) > 0$  independent of $u$ such that  $||u||_{2,2,\Omega_1} \le \mathscr{C} ||f||_{0,2,\Omega_1}$.
		
		\item\label{thm:creg_local} \textbf{Global regularity of Poisson equation. {\cite[Corollary 8.3.3]{mazya}}}  
		Set $m \ge 2$ and $p \ge 1$. Let $\Omega$ be a rectangle and $f \in W^{m - 2,p}(\Omega)$. If $u \in H^1(\Omega)$ is a solution of the boundary value problem $-\Delta u = f$, where $(\lambda - 1)\nabla u \cdot \boldsymbol{n} + \lambda u = 0$, $\lambda \in \{0,1\}$,   then $u \in W^{m,p}(\Omega)$.  
		
			\item\label{thm:uv_reg} \textbf{Internal regularity of Stokes equation. {\cite[Theorems IV.5.8, IV.6.1]{Boyer2013}}}  	Let $\Omega$ be an open and bounded set and $g \in H^{k+1}_{\mathrm{loc}}(\Omega)$, $k \ge 0$. Let $(\boldsymbol{u},p) \in \boldsymbol{H}_{\mathrm{loc}}^1(\Omega) \times L_{\mathrm{loc}}^2(\Omega)$ be a solution to the compressible Stokes system~\eqref{eqn:pv_sys}. Then, it holds $(\boldsymbol{u},p) \in \boldsymbol{H}_{\mathrm{loc}}^{k+2,2} \times H^{k+1}_{\mathrm{loc}}(\Omega)$. Also, for every bounded and open sets $\overline{\Omega_{1}} \subset \Omega_{2} \subset \overline{\Omega_{2}} \subset \Omega$ there exists a constant $\mathscr{C}(\Omega_{1},\Omega_{2}) > 0$ independent of $\boldsymbol{u}$ and $p$ such that $||\boldsymbol{u}||_{k+2,2,\Omega_1} + ||p||_{k+1,2,\Omega_1} \le \mathscr{C}||g||_{k+1,2,\Omega_1}.$	
			\end{enumerate}
		\subsection{Lemmas}
		\label{appen_C}
		\renewcommand{\theequation}{\ref{appen_C}.\arabic{equation}}
		 \begin{enumerate}[label= (\roman*),ref=\ref{appen_C}(\roman*)]
			\item \label{appen_id.e} \textbf{ Weak--strong convergence. \cite[Lemma D.8]{droniou2018gradient}.}
			If $p \in [0,\infty)$ and $q := p/(1-p)$ are conjugate exponents, $f_n \rightarrow f$ strongly in $L^p(X)$, and $g_n \halfarrow g$ weakly in $L^{q}(X)$, where $(X,\mu)$ is a measured space, then $\int_{X} f_{n}g_{n}\,\mathrm{d}\mu \rightarrow \int_{X} fg\,\mathrm{d}\mu.$

			The next result follows from Lebesgue's dominated convergence theorem. 
			\item\label{appen_id.f} \textbf{ Bounded--strong convergence.}
			If $f_n \rightarrow f$ in $L^2(X)$, $g_n \rightarrow g$ almost everywhere on $X$, $||g_n||_{L^\infty(X)}$ is uniformly bounded, then $f_ng_n$ converges to $fg$ in $L^2(X)$.	
		\end{enumerate}
		%		\item\label{appen_thm.b} \textbf{Theorem}~(total variation measure{~\cite[p.~167]{evansmeas}})
		%		If $\beta : \mathcal{A} \rightarrow \mathbb{R}$ is of $BV$, then there exists a finite Radon measure represented by $D\beta \in \mathcal{M}(\mathcal{A};\mathbb{R}^d)$ if $d > 1$ and $\beta' \in \mathcal{M}(\mathcal{A};\mathbb{R}^1)$ if $\mathcal{A} \subset \mathbb{R}$ called the total variation measure of $\beta$ such that 
		%		\begin{align}
		%		\int_{\mathcal{A}} \beta \,\mathrm{div}(\boldsymbol{\varphi})\mathrm{d}{\boldsymbol{x}} = -\int_{\mathcal{A}} \beta D\beta(\boldsymbol{x})
		%		\end{align}
		%		and it holds
		%		\begin{align}
		%		\int_{\mathcal{A}} |D\beta(\boldsymbol{x})| \le |\beta|_{BV(\mathcal{A})}.
		%		\end{align}		

\end{document}